\numberwithin{equation}{section}
 \newtheorem{definition}{Definition} [section]       
 \newtheorem{remark}[definition]{Remark}       
 \newtheorem{example}[definition]{Example}
\newtheorem{algorithm}[definition]{Algorithm}
 \newtheorem{proposition}[definition]{Proposition}       
 \newtheorem{theorem}[definition]{Theorem}       
 \newtheorem{corollary}[definition]{Corollary}       
  \newtheorem{lemma}[definition]{Lemma}
\newcommand{\Aut}{\mbox{\rm Aut}}
\newcommand{\Sym}{\mbox{\rm Sym}}
\newcommand{\Ind}{\mbox{\rm Ind}}
\newcommand{\Res}{\mbox{\rm Res}}
\newcommand{\id}{\mbox{\rm id}}
\newcommand{\Hom}{\mbox{\rm Hom}}
\newcommand{\tv}{\mbox{\rm tv}}
\begin{document} 

\title{Representation theory of the group of automorphisms of a finite rooted tree.} 
 \author{Fabio Scarabotti}  
 
\maketitle 

\begin{abstract} 
We construct the ordinary irreducible representations of the group of automorphisms of a finite rooted tree
and we get a natural parametrization of them.
To achieve these goals, we introduce and study the combinatorics of tree compositions, a natural generalization of set compositions but with new features and more complexity. These combinatorial structures lead to a family of permutation representations which have the same parametrization of the irreducible representations. 
Our trees are not necessarily spherically homogeneous and our approach is coordinate free. 
\footnote{{\it AMS 2020 Math. Subj. Class.}Primary: 20C15. Secondary: 05E10, 20B25, 20C30.

\it Keywords: Automorphisms of rooted trees, Symmetric groups, Representation theory, Clifford theory, Mackey theory.}
\end{abstract}

\tableofcontents

\section{Introduction}\label{introduction}

The symmetric group $S_n$ may be seen as the group of automorphisms of a rooted tree $\mathcal{T}$ with just one level, that is an $n$-star. Similarly, the wreath product of two symmetric groups $S_m\wr S_n$ is the group of automorphisms of a rooted tree with two levels: just add $m$ children to each leaf of the $n$-star; we may continue: the iterated wreath product of $h$ symmetric groups is naturally the group of automorphisms of a spherically homogeneous rooted tree $\mathcal{T}$ with $h$ levels. This observation was the starting point of the joint paper with Tullio Ceccherini-Silberstein and Filippo Tolli \cite{CST2}, where, in particular, we analyzed the permutation representations on the varieties of spherically homogeneous subtrees, showing that they decomposes without multiplicity. This is a clear analogy with the $h=1$ case, where they correspond to the permutation representations $M^{n-k,k}$ of the symmetric group. 

In the present paper we push the analogy further. We introduce a theory of tree compositions, which generalizes the usual theory of set compositions, and we use it to construct a family of permutation representations of $\Aut(\mathcal{T})$, the group of automorphisms of a finite rooted tree $\mathcal{T}$ (which is not necessarily spherically homogeneous). 
These permutation representations are the natural generalization of the $M^\lambda$ usually introduced for the symmetric group (see \cite{book2,JK,SaganSym}) and we parametrize them by means of what we have called trees of partitions. Then we show that these combinatorial objects also yields a natural parametrization of the conjugacy classes. The irreducible representations of $\Aut(\mathcal{T})$ are constructed in two independent ways: by means of iterated Clifford theory and then by means of Mackey theory. The latter approach stress further the analogies with the representation theory of the symmetric group: we show that we can use one of the classical approach for $S_n$, namely that in \cite[Chapter 2]{JK}, in such a way that the representation theory of $S_n$ becomes a particular case of that of $\Aut(\mathcal{T})$. In both cases, the irreducible representations are naturally parametrized by means of the trees of partitions previously introduced.

This is the plan of the paper.

In Section \ref{Sectreecomp} we define the fundamental notion of tree composition. It is a natural generalization of the composition of a set but it has two peculiarities. First of all, it is presented in a coordinate free fashion, by means of surjective maps between rooted trees. Moreover, these maps must satisfy a regularity condition imposed by the representation theory. 
In Section \ref{Secquotient} we define the quotient of a tree composition. Its name derives from its construction: we start by introducing a suitable equivalence relation and its quotient yields the desired composition.
In Section \ref{Sectreeofpart} we introduce the type of a tree composition: it is a generalization, in our setting, of the usual notion of integer composition. Then we introduce the notion of tree of partitions: it is the generalization, in our setting, of the notion of integer partition and we show that tree of partitions parametrizes the equivalence classes of tree compositions. 

In Section \ref{Secconjclasses} we describe and parametrize the conjugacy classes of $\Aut(\mathcal{T})$ by means equivalence classes of tree compositions which in turn are parametrized by the tree of partitions associated to $\mathcal{T}$. Similar results were obtained in \cite{GNS}.
In Section \ref{SecClifford} we give a first construction of the irreducible representations of $\Aut(\mathcal{T})$ by iterating Clifford theory, showing also that they are naturally subspaces of the permutation representations given by the action of the group on the tree compositions. This way, we are naturally lead to the parametrization by means of the trees of partitions.

In Section \ref{Secrefinements} we construct a theory of common refinements of two tree compositions. It is the key ingredient to parametrize diagonal actions of $\Aut(\mathcal{T})$ on products of spaces of tree compositions and therefore to apply Mackey theory.
In Section \ref{Sectransp} we introduce a total order between tree of partitions that generalizes the lexicographic order for integer partitions. Then we generalizes, in our setting, two basic combinatorial and algebraic facts which are fundamental ingredient in the representation theory of the symmetric group, namely the existence and uniqueness of the $0-1$ matrix in \cite[2.1.1]{JK} and the easier implication of the Gale-Ryser Theorem \cite[1.4.17]{JK}, with the dominance order replaced by the total order. These results are quite easy for set compositions but in the context of tree compositions they require more technical proofs.
In Section \ref{SecrepAuttree} we finally give a second construction of the ordinary irreducible representations of $\Aut(\mathcal{T})$, exactly as in \cite[Theorem 2.1.11]{JK}. This approach does not use the irreducible representation of the symmetric group and, with respect to the Clifford approach, it requires just induction of the trivial and alternating representations from two subgroups.

Other references on the approach to the representation theory of the symmetric group that we follow in Section \ref{SecrepAuttree} are \cite{CST3,Mayer}; it is also sketched in \cite[Section 3.11]{Sternberg}. James and Kerber attributed it to A. J. Coleman \cite{Coleman}. In \cite{KT} the basic property on which it is based is extended to the wreath product of two symmetric groups. The standard reference on the representation theory of wreath products is \cite[Chapter 4]{JK}; a more recent account is \cite{book}. The representation theory of groups of the form $G\wr S_n$ is an active area of research; see \cite{AdinRoich,MishSri}. The representation theory of iterated wreath product of cyclic groups is studied in \cite{OOR}. General references on groups acting on (infinite) rooted trees are \cite{BORT,Grigorchuk}. In \cite{GNS} the authors describe the conjugacy classes of the groups of automorphisms of infinite trees. In \cite{Olshanskii} and \cite[Chapter 3]{FigNeb} the irreducible representations of the groups of automorphisms of certain finite trees are used to describe the irreducible representations of the group of automorphisms of an infinite, homogeneous tree.

\section{Finite rooted trees: automorphisms and compositions}\label{Sectreecomp}

Let $\mathcal{T}$ be a finite rooted tree \cite[Section 1.5]{Diestel}; we identify $\mathcal{T}$ we the set of its vertices. As in \cite[Section 3.1]{CST2}, the root is denoted by $\{\emptyset\}$. A {\em path} in $\mathcal{T}$ of length $k$ is a finite sequence of vertices 
$v_0,v_1,v_2,\dotsc,v_k\in\mathcal{T}$
such that $v_j$ is adjacent to $v_{j+1}$, $j=0,1,2,\dotsc,k-1$. If $v_0$ is the root we say that $v_k$ is at level $k$; the $k$-\emph{level} of the tree is the set of all vertices at level $k$. If $v,w\in \mathcal{T}$ are adjacent, $v$ at level $k$ and $w$ at level $k+1$ we say that $v$ is the \emph{parent} of $w$ and that $w$ is a \emph{child} of $v$. A \emph{leaf} of $\mathcal{T}$ is a vertex without children; a vertex with at least one child is {\em internal}. If $u\in \mathcal{T}$ is not a leaf then $\text{\rm Ch}_\mathcal{T}(u)$ (or simply $\text{\rm Ch}(u)$) will denote the set of its children. If $u,w$ have the same parent $v$ we say that $u$ and $w$ are {\em siblings}. The \emph{height} of the tree $\mathcal{T}$ is the level of the lowest leaf; equivalently, the length of the longest path starting from the root. We denote by $H(\mathcal{T})$ the height of $\mathcal{T}$. If $v,w\in\mathcal{T}$, $v$ at level $j$, $w$ at level $k$, $j<k$ and there is a path from $v$ to $w$ we say that $v$ is an {\em ancestor} of $w$ and that $w$ is a {\em descendant} of $v$. 

We denote by $\Aut (\mathcal{T})$ the group of all automorphisms of $\mathcal{T}$ as a rooted tree, that is the group of all bijective maps
$g \colon \mathcal{T} \to \mathcal{T}$ which preserve the adjacency relation and that, in addition, fix the root $\emptyset$.  For each internal vertex $u\in \mathcal{T}$ we denote by $\mathcal{T}_u$ the subtree made up of $u$ and of all its descendants; if $\mathcal{P}$ is another rooted tree and $\varphi\colon \mathcal{T}\rightarrow\mathcal{P}$ is a {\em rooted tree homomorphism} (that is, a graph homomorphism \cite[Section 1.1]{Diestel} that sends the root of $\mathcal{T}$ to the root of $\mathcal{P}$) we denote by 
\begin{equation}\label{defphiu}
\varphi_u\colon \mathcal{T}_u\rightarrow \mathcal{P}_{\varphi(u)} 
\end{equation}
the restriction of $\varphi$ to $\mathcal{T}_u$.

If $h=H(\mathcal{T})$ and $\lvert\text{Ch}(u)\rvert=r_k$, for every vertex $u$ at level $k-1$, $k=1,2,\dotsc,h$, we say that $\mathcal{T}$ is a \emph{spherically homogeneous rooted tree of branching type} ${\bf r}\coloneqq(r_1,r_2, \ldots, r_h)$. If this is the case, then
\begin{equation}\label{Autiterwreath}
\Aut(\mathcal{T}) \cong S_{r_h} \wr S_{r_{h-1}} \wr \cdots \wr S_{r_2} \wr S_{r_1},
\end{equation}
where $S_n$ is the symmetric group on $n$ objects and $\wr$ denotes the wreath product; see \cite[Theorem 2.1.15]{book3}. If $r_1=r_2=\dotsb=r_h=1$ then we say that $\mathcal{T}$ is a {\em path of length} $h$.  

\begin{example}
{\rm
The first tree below is of height 3 while the second tree is spherically homogeneous of branching type $(3,2,3)$. \\

\begin{picture}(400,100)
\put(99,93){${\scriptstyle \emptyset}$}
\put(100,90){\circle*{2}}

\thicklines
\put(100,90){\line(-3,-2){60}}
\put(100,90){\line(0,-2){40}}
\put(100,90){\line(3,-2){60}}

\put(40,50){\circle*{2}}
\put(100,50){\circle*{2}}
\put(160,50){\circle*{2}}


\put(40,50){\line(-1,-1){20}}
\put(40,50){\line(1,-1){20}}

\put(20,30){\circle*{2}}
\put(60,30){\circle*{2}}


\put(100,50){\line(-1,-1){20}}
\put(100,50){\line(1,-1){20}}

\put(80,30){\circle*{2}}
\put(120,30){\circle*{2}}


\put(160,50){\line(-1,-1){20}}
\put(160,50){\line(0,-1){20}}
\put(160,50){\line(1,-1){20}}

\put(140,30){\circle*{2}}
\put(160,30){\circle*{2}}
\put(180,30){\circle*{2}}

\put(20,30){\line(-1,-1){15}}
\put(20,30){\line(0,-1){15}}
\put(20,30){\line(1,-1){15}}

\put(5,15){\circle*{2}}
\put(20,15){\circle*{2}}
\put(35,15){\circle*{2}}

\put(60,30){\line(0,-1){15}}
\put(60,15){\circle*{2}}


\put(140,30){\line(-1,-2){8}}
\put(140,30){\line(1,-2){8}}

\put(132,15){\circle*{2}}
\put(148,15){\circle*{2}}

\put(180,30){\line(0,-1){15}}
\put(180,15){\circle*{2}}

\put(319,93){${\scriptstyle \emptyset}$}
\put(320,90){\circle*{2}}

\thicklines
\put(320,90){\line(-2,-1){80}}
\put(320,90){\line(0,-2){40}}
\put(320,90){\line(2,-1){80}}

\put(240,50){\circle*{2}}
\put(320,50){\circle*{2}}
\put(400,50){\circle*{2}}


\put(240,50){\line(-1,-1){20}}
\put(240,50){\line(1,-1){20}}

\put(220,30){\circle*{2}}
\put(260,30){\circle*{2}}


\put(320,50){\line(-1,-1){20}}
\put(320,50){\line(1,-1){20}}

\put(300,30){\circle*{2}}
\put(340,30){\circle*{2}}


\put(400,50){\line(-1,-1){20}}
\put(400,50){\line(1,-1){20}}

\put(380,30){\circle*{2}}
\put(420,30){\circle*{2}}


\put(220,30){\line(-1,-2){8}}
\put(220,30){\line(0,-1){15}}
\put(220,30){\line(1,-2){8}}

\put(212,15){\circle*{2}}
\put(220,15){\circle*{2}}
\put(228,15){\circle*{2}}

\put(260,30){\line(-1,-2){8}}
\put(260,30){\line(0,-1){15}}
\put(260,30){\line(1,-2){8}}

\put(252,15){\circle*{2}}
\put(260,15){\circle*{2}}
\put(268,15){\circle*{2}}


\put(300,30){\line(-1,-2){8}}
\put(300,30){\line(0,-1){15}}
\put(300,30){\line(1,-2){8}}

\put(292,15){\circle*{2}}
\put(300,15){\circle*{2}}
\put(308,15){\circle*{2}}

\put(340,30){\line(-1,-2){8}}
\put(340,30){\line(0,-1){15}}
\put(340,30){\line(1,-2){8}}

\put(332,15){\circle*{2}}
\put(340,15){\circle*{2}}
\put(348,15){\circle*{2}}


\put(380,30){\line(-1,-2){8}}
\put(380,30){\line(0,-1){15}}
\put(380,30){\line(1,-2){8}}

\put(372,15){\circle*{2}}
\put(380,15){\circle*{2}}
\put(388,15){\circle*{2}}

\put(420,30){\line(-1,-2){8}}
\put(420,30){\line(0,-1){15}}
\put(420,30){\line(1,-2){8}}

\put(412,15){\circle*{2}}
\put(420,15){\circle*{2}}
\put(428,15){\circle*{2}}

\end{picture}
}
\end{example}

We introduce a coordinate free description of a rooted tree in the following way: we denote by $V_\mathcal{T}$ the vertices at the first level and by $W_\mathcal{T}$ the vertices in $V_\mathcal{T}$ which are leaves. Then the {\em first level decomposition} is given by:
\begin{equation}\label{firstld}
\mathcal{T}=\{\emptyset\}\sqcup V_\mathcal{T}\sqcup \left[\bigsqcup_{u\in V_\mathcal{T}\setminus W_\mathcal{T}}\left(\{u\}\times \widetilde{\mathcal{T}_u}\right);\right],
\end{equation}
where $\widetilde{\mathcal{T}_u}\coloneqq\mathcal{T}_u\setminus\{u\}$. That is, we think that to each $u\in V_\mathcal{T}\setminus W_\mathcal{T}$ is attached the tree $\mathcal{T}_u$ and we call it a {\em first level (rooted) subtree}. When we use \eqref{firstld} we usually represent the vertices of $\mathcal{T}$ below the first level as couples of the form 
\begin{equation}\label{firstldcoup}
(u,w)\quad\text{ where }\; u\in V_\mathcal{T}\setminus W_\mathcal{T}\;\text{ and }\; w\in \widetilde{\mathcal{T}_u}.
\end{equation}
Clearly, if  $H(\mathcal{T})\geq 2$ then $W_\mathcal{T}$ might be empty.

By means of \eqref{defphiu} and \eqref{firstldcoup}, a rooted tree homomorphism $\varphi\colon \mathcal{T}\rightarrow \mathcal{P}$ may be represented in the following form: a map between sets $\varphi\rvert_{W_\mathcal{T}}\colon W_\mathcal{T}\rightarrow W_\mathcal{P}$ and then
\begin{equation}\label{firstldell}
\varphi(u,w)=(\varphi(u),\varphi_u(w)),\qquad\text{ for all }u\in V_\mathcal{T}\setminus W_\mathcal{T}, w\in \widetilde{\mathcal{T}_u}.
\end{equation}
If $\alpha\colon\mathcal{P}\rightarrow\mathcal{Q}$ is another tree homomorphism and $u,w$ are as above, then $(\alpha\circ\varphi)(u,w)=\alpha(\varphi(u),\varphi_u(w))=\bigl((\alpha\circ\varphi)(u),\left(\alpha_{\varphi(u)}\circ\varphi_u\right)(w)\bigr)$, that is
\begin{equation}\label{firstldell2}
(\alpha\circ\varphi)_u=\alpha_{\varphi(u)}\circ\varphi_u.
\end{equation}
We will use \eqref{firstldell} also to express isomorphisms and automorphisms. In particular, if $g\in\Aut(\mathcal{T})$ then from \eqref{firstldell2} we deduce that 
\[
(u,w)=(g\circ g^{-1})(u,w)=\bigl(g\circ g^{-1}(u),\left(g_{g^{-1}(u)}\circ (g^{-1})_u\right)(w)\bigr)
\]
and therefore
\begin{equation}\label{firstldell3}
(g^{-1})_u=\left(g_{g^{-1}(u)}\right)^{-1}.
\end{equation}

\begin{definition}\label{deftreecomp}
{\rm
If $\mathcal{T},\mathcal{P}$ are rooted trees a $\mathcal{P}$-{\em composition} of $\mathcal{T}$ 
is a surjective rooted tree homorphism $\varphi\colon \mathcal{T}\rightarrow\mathcal{P}$, such that:
if $u,v\in\mathcal{T}$ are {\em distinct internal siblings} and $\varphi(u)=\varphi(v)\eqqcolon x$ then there exists $\tau\in\Aut(\mathcal{T})$ such that $\tau(u)=v$ and $\varphi_u=\varphi_v\circ\tau_u$, that is the following diagram is commutative:
\begin{equation}\label{diagTRSuv2old}
\xymatrix@R=5pt{
\mathcal{T}_u\ar[dr]^{\varphi_u}\ar[dd]_{\tau_u} & \\
&\mathcal{P}_x\\
\mathcal{T}_v\ar[ur]_{\varphi_v}&}
\end{equation}
We also say that $\varphi$ is a {\em tree composition} of $\mathcal{T}$ and that \eqref{diagTRSuv2old} is the {\em regularity condition}.}
\end{definition}

If there exists a $\varphi$ as above then we say that $\mathcal{P}$ is a {\em compositional tree} for $\mathcal{T}$. Note that, in \eqref{diagTRSuv2old}, it is equivalent to say that $\tau_u$ is an isomorphism between $\mathcal{T}_u$ and $\mathcal{T}_v$ because such an isomorphism may always extended to an automorphism of the whole tree.  Now we show that the regularity condition has a more extensive validity.
\begin{lemma}\label{lemmaregcond}
If $\varphi\colon\mathcal{T}\rightarrow\mathcal{P}$ is a tree composition, $u,v\in\mathcal{T}$ are distinct internal vertices and $\varphi(u)=\varphi(v)\eqcolon x$ then there exists $\tau\in\Aut(\mathcal{T})$ such that
\begin{equation}\label{diagTRSuv2}
 \tau(u)=v\qquad\text{ and }\qquad\varphi_u=\varphi_v\circ\tau_u.
\end{equation}
\end{lemma}
\begin{proof}
Let $w$ be the lowest common ancestor of $u$ and $v$, so that $w$ has distinct siblings $\widetilde{u}$ ancestor of $u$ and $\widetilde{v}$ ancestor of $v$. Clearly, $\varphi(\widetilde{u})=\varphi(\widetilde{v})=$ the ancestor of $x$ at the same level of $\widetilde{u}$ and $\widetilde{v}$ so that, by the hypothesis, there exists 
$\widetilde{\tau}\in\Aut(\mathcal{T})$ such that 
\begin{equation}\label{starp1bis}
\widetilde{\tau}\left(\widetilde{u}\right)=\widetilde{v}\qquad\text{ and }\qquad\varphi_{\widetilde{u}}=\varphi_{\widetilde{v}}\circ\widetilde{\tau}_{\widetilde{u}}. 
\end{equation}
Let $l$ be the length of the path from $w$ to $u$. If $l=1$ then $u=\widetilde{u}$, $v=\widetilde{v}$ and we are done. Otherwise we use induction on $l$. Set $u'\coloneqq \widetilde{\tau}^{-1}(v)$, so that $u'\in\mathcal{T}_{\widetilde{u}}$.
Then the restriction of \eqref{starp1bis} to $\mathcal{T}_{u'}$ yields
$\varphi_{u'}=\varphi_v\circ\widetilde{\tau}_{u'}$. If $u'=u$ we are done. Otherwise, from $\widetilde{\tau}(u')=v$ and \eqref{starp1bis} it follows that 
\[
\varphi(u')=\varphi_{\widetilde{u}}(u')=\varphi_{\widetilde{v}}\circ\widetilde{\tau}_{\widetilde{u}}(u')=\varphi_{\widetilde{v}}(v)=\varphi(v)=\varphi(u),
\]
so that we may apply the inductive hypothesis on $u'$ and $u$ (their lowest common ancestor is $\widetilde{u}$ or below $\widetilde{u}$): there exists $\tau'\in\Aut(\mathcal{T})$ such that $\tau'(u)=u'$ and $\varphi_u=\varphi_{u'}\circ\tau'_u$. Finally,
\[
\left(\widetilde{\tau}\circ\tau'\right)(u)=\widetilde{\tau}(u')=v\quad\text{and}\quad\varphi_u=\varphi_{u'}\circ\tau'_u=\varphi_v\circ\widetilde{\tau}_{u'}\circ\tau'_u=\varphi_v\circ\left(\widetilde{\tau}\circ\tau'\right)_u\quad\Longrightarrow\quad\tau=\widetilde{\tau}\circ\tau'. 
\]

\begin{picture}(400,120)
\put(80,110){\circle*{2}}
\put(79,113){${\scriptstyle \emptyset}$}

\thicklines
\put(80,110){\line(0,-1){20}}
\put(80,90){\circle*{2}}
\put(82,90){${\scriptstyle w}$}
\put(80,90){\line(-1,-1){20}}
\put(60,70){\circle*{2}}
\put(52,69){${\scriptstyle \widetilde{u}}$}
\put(80,90){\line(1,-1){20}}
\put(100,70){\circle*{2}}
\put(103,69){${\scriptstyle \widetilde{v}}$}
\put(60,70){\line(0,-1){50}}
\put(60,20){\circle*{2}}
\put(58,13){${\scriptstyle u}$}
\put(60,50){\circle*{2}}
\put(60,50){\line(-1,-1){30}}
\put(30,20){\circle*{2}}
\put(28,13){${\scriptstyle u'}$}
\put(100,70){\line(0,-1){50}}
\put(100,20){\circle*{2}}
\put(98,13){${\scriptstyle v}$}

\put(190,100){\circle*{2}}
\put(188,105){$u$}
\thicklines
\put(190,100){\line(-1,-2){20}}
\put(190,100){\line(1,-2){20}}
\put(186,75){$\mathcal{T}_u$}

\put(280,100){\circle*{2}}
\put(278,105){$v$}
\thicklines
\put(280,100){\line(-1,-2){20}}
\put(280,100){\line(1,-2){20}}
\put(276,75){$\mathcal{T}_v$}

\put(420,100){\circle*{2}}
\put(418,105){$x$}
\thicklines
\put(420,100){\line(-1,-2){20}}
\put(420,100){\line(1,-2){20}}
\put(416,75){$\mathcal{P}_x$}

\thinlines
\put(230,90){$\tau_u$}
\put(215,80){\vector(1,0){40}}

\qbezier(280,60)(350,20)(415,60)
\put(415,60){\vector(4,3){1}}
\put(320,50){$\varphi_v$}

\qbezier(190,60)(350,0)(425,60)
\put(425,60){\vector(4,3){1}}
\put(245,45){$\varphi_u$}

\end{picture}
\end{proof}

In particular, if $\varphi$ is a tree composition, then all the maps $\varphi_u$, $u\in\mathcal{T}$ internal, are tree compositions and if $\varphi(u)=\varphi(v)$ then $\mathcal{T}_u$ and $\mathcal{T}_v$ are necessarily isomorphic. We denote by $\mathsf{C}(\mathcal{T},\mathcal{P})$ the set of all $\mathcal{P}$-compositions of $\mathcal{T}$.
The group $\Aut(\mathcal{T})\times \Aut(\mathcal{P})$ acts on $\mathsf{C}(\mathcal{T},\mathcal{P})$ in a natural way: if $\varphi\in\mathsf{C}(\mathcal{T},\mathcal{P})$, $g\in \Aut(\mathcal{T})$ and $k\in \Aut(\mathcal{P})$ we set 
\begin{equation}\label{action2Aut}
(g,k)\varphi\coloneqq k\circ\varphi\circ g^{-1}.
\end{equation}
The {\em identity composition} of $\mathcal{T}$ is given by the identity automorphism: $\id_\mathcal{T}\colon \mathcal{T}\rightarrow\mathcal{T}$.

\begin{example}\label{Examplephig}{\rm
We illustrate the notion of tree composition with a very important example, that will be used in Section \ref{Secconjclasses} to parametrize the conjugacy classes of $\Aut(\mathcal{T})$; the same construction is in \cite{GNS}, for infinite rooted trees, and a similar one in \cite[Section 9.3]{GoRo}, but with quite different applications. See also the notion of quotient graph in \cite[Section 3.1]{Serre}.
If  $g\in\Aut(\mathcal{T})$, $u,v\in\mathcal{T}$, $\{v,g(v),\dotsc,g^{\ell-1}(v)\}$ is a $g$-orbit at level $k-1$ and $\{u,g(u),\dotsc,g^{r-1}(u)\}$ is a $g$-orbit at level $k$ then we have only two possibilities: 
\begin{enumerate}
\item
for all $0\leq j\leq \ell-1$,  $0\leq i\leq r-1$, $g^i(u)$ is not a child of $g^j(v)$;
\item\label{seccase}
for all  $0\leq i\leq r-1$ there exists $0\leq j\leq \ell-1$ such that $g^i(u)$ is a child of $g^j(v)$.
\end{enumerate}
Note that, in the second case, $j$ is unique but a $j$ may have more than one child in the orbit of $u$ because $r$ may be greater than $\ell$: just consider the case $\ell=1$, that is $g$ fixes $v$ and it is transitive on the children of $v$. A more elaborated example is the so called adding machine \cite[Exercise 6.59]{CC}, which yields an automorphism of the $q$-ary tree which is transitive at every level.
Now to each $g\in\Aut(\mathcal{T})$ we associate a tree composition $\varphi^g\in\mathsf{C}\left(\mathcal{T},\mathcal{P}^g\right)$ in the following way. The vertices of $\mathcal{P}^g$ at level $k$, $k=1,2,\dotsc, h$, are the $g$-orbits of the elements of $\mathcal{T}$ at level $k$. Two vertices $x,y\in\mathcal{P}^g$, $x$ at level $k-1$ and $y$ at level $k$, are joined if the $g$-orbit corresponding to $y$ is made up of children of the vertices in the $g$-orbit corresponding to $x$ (cf, \ref{seccase}. above); $\mathcal{P}^g$ is a tree because every vertex has exactly one parent. Finally, for all $u\in\mathcal{T}$ we set
\begin{equation}\label{defphig}
\varphi^g(u)\coloneqq\text{ the }g\text{-orbit of }u.
\end{equation}
Clearly, $\varphi^g$ is a surjective rooted trees homomorphism. Moreover, if $u,v\in\mathcal{T}$, $u\neq v$, are internal and $\varphi^g(u)=\varphi^g(v)\eqqcolon x$ then there exists a positive integer $k$ such that:
\begin{equation}\label{starp25May} 
v=g^k(u)\quad\text{ and }\quad(\varphi^g)_u=(\varphi^g)_v\circ(g^k)_u.
\end{equation}
Indeed, $k$ exists because $u$ and $v$ belong to the same $g$-orbit (corresponding to $x$). Moreover, if $u'\in\mathcal{T}_u$ and $\varphi^g(u')=x'\in\left(\mathcal{P}^g\right)_{x}$ then $\varphi^g(g^k(u'))=x'$ because $x'$ corresponds to the $g$-orbit of $u'$ which clearly contains also $g^k(u')$. Then we have
$\varphi^g(g^k(u'))=\varphi^g(u')$ and this ends the proof of \eqref{starp25May} and $\varphi^g$ is a tree composition.
}
\end{example}

\begin{remark}{\rm
If $\varphi\in\mathsf{C}(\mathcal{T},\mathcal{P})$ and $\psi\in\mathsf{C}(\mathcal{P},\mathcal{S})$ then $\psi\circ\varphi\colon\mathcal{T}\rightarrow\mathcal{S}$ is not necessarily a tree composition. Indeed, consider the trees below and set $\varphi(w_1)=\varphi(w_2)=x_1$, $\varphi(w_3)=x_2$ and $\psi(x_1)=\psi(x_2)=s$.
Then $\varphi$ and $\alpha$ are tree compositions but $\alpha\circ\varphi$ is not a tree composition because $\alpha\circ\varphi(u)=\alpha\circ\varphi(v)$ but $\mathcal{T}_u$ and $\mathcal{T}_v$ are not isomorphic.

\begin{picture}(400,80)
\put(80,65){${\scriptstyle\mathcal{T}}$}
\put(100,60){\circle*{2}}
\put(98,65){${\scriptstyle\emptyset}$}
\thicklines
\drawline(100,60)(80,40)
\put(80,40){\circle*{2}}
\put(70,40){${\scriptstyle u}$}
\drawline(100,60)(120,40)
\put(120,40){\circle*{2}}
\put(125,40){${\scriptstyle v}$}
\drawline(80,40)(70,20)
\put(70,20){\circle*{2}}
\put(65,12){${\scriptstyle w_1}$}
\drawline(80,40)(90,20)
\put(90,20){\circle*{2}}
\put(85,12){${\scriptstyle w_2}$}

\drawline(120,40)(120,20)
\put(120,20){\circle*{2}}
\put(115,12){${\scriptstyle w_3}$}

\put(230,65){${\scriptstyle\mathcal{P}}$}
\put(250,60){\circle*{2}}
\put(248,65){${\scriptstyle\emptyset}$}
\thicklines
\drawline(250,60)(230,40)
\put(230,40){\circle*{2}}
\drawline(250,60)(270,40)
\put(270,40){\circle*{2}}
\drawline(230,40)(230,20)
\put(230,20){\circle*{2}}
\put(225,12){${\scriptstyle x_1}$}

\drawline(270,40)(270,20)
\put(270,20){\circle*{2}}
\put(265,12){${\scriptstyle x_2}$}

\put(365,65){${\scriptstyle\mathcal{S}}$}
\put(378,65){${\scriptstyle\emptyset}$}
\put(380,60){\circle*{2}}
\thicklines
\put(380,60){\line(0,-1){40}}
\put(380,40){\circle*{2}}
\put(380,20){\circle*{2}}
\put(378,10){${\scriptstyle s}$}
\end{picture}
}
\end{remark}

\begin{definition}\label{defvarie}{\rm
If $\varphi_j\in\mathsf{C}(\mathcal{T}_j,\mathcal{P}_j)$, $j=1,2$, are two tree compositions an {\em equivalence} between them is given by a couple of rooted trees isomorphisms $\tau \colon\mathcal{T}_1\rightarrow\mathcal{T}_2$, $\omega \colon\mathcal{P}_1\rightarrow\mathcal{P}_2$ such that $\omega\circ \varphi_1=\varphi_2\circ \tau$. 
If $\mathcal{P}_1=\mathcal{P}_2\eqqcolon\mathcal{P}$ and $\omega=\id_\mathcal{P}$ we say that $\varphi_1$ and $\varphi_2$ are {\em strictly equivalent}. If $\mathcal{T}_1=\mathcal{T}_2\eqqcolon\mathcal{T}$ and $\alpha=\id_\mathcal{T}$ we say that $\varphi_1$ and $\varphi_2$ are {\em isomorphic}.
}
\end{definition}

Definition \ref{defvarie} may be illustrated by means of the following commutative diagrams:
\begin{equation}\label{defequiviso}\xymatrix{
&\mathcal{T}_1\ar[r]^{\varphi_1}\ar[d]_{\tau} & \mathcal{P}_1\ar[d]_{\omega}&&&\mathcal{T}_1\ar[rd]^{\varphi_1}\ar[d]_{\tau} &&&&&\mathcal{P}_1\ar[d]^{\omega}\\
&\mathcal{T}_2\ar[r]_{\varphi_2}_*++++\txt{equivalence}& \mathcal{P}_2&&&\mathcal{T}_2\ar[r]_{\varphi_2}_*++++\txt{strict equivalence}&\mathcal{P}&&&\mathcal{T}\ar[ru]^{\varphi_1}\ar[r]_{\varphi_2}_*++++\txt{isomorphism}&\mathcal{P}_2\\
}
\end{equation}
Clearly both strict equivalence and isomorphism imply equivalence.
Moreover, the regularity condition in Definition \ref{deftreecomp} and Lemma \ref{lemmaregcond} is a strict equivalence: if $x\in\mathcal{P}$ is internal, then all the tree compositions $\varphi_u\colon\mathcal{T}_u\rightarrow\mathcal{P}_x$, $u\in\varphi^{-1}(x)$, are strictly equivalent; that is, $\mathcal{P}_x$ determines a precise way to ``decompose" the subtrees that map to it.

\begin{example}\label{Remobviousfacts2}
{\rm
The orbits of $\Aut(\mathcal{T})\times \Aut(\mathcal{P})$ on $\mathsf{C}(\mathcal{T},\mathcal{P})$ with respect to the action \eqref{action2Aut} coincide with the equivalence classes of $\mathcal{P}$-compositions of $\mathcal{T}$; the orbits of $\Aut(\mathcal{T})$ coincide with the strict equivalence classes; the obits of $\Aut(\mathcal{P})$ coincide with the isomorphism classes. See also Example \ref{exampleh1} and Theorem \ref{Theoorbits}. Finally, note that the strict equivalence class of the identity composition $\id_\mathcal{T}$ is made up of all $g\in\Aut(\mathcal{T})$.
}
\end{example}

\begin{remark}\label{Remobviousfacts}
{\rm The fact that the equivalence of tree compositions is transitive is obvious, but we state it explicitly for future reference, together with some similar facts. Suppose that $\varphi_1,\varphi_2$ are equivalent as in \eqref{defequiviso}. If $\varphi_3\in\mathsf{C}(\mathcal{T}_3,\mathcal{P}_3)$ is equivalent to $\varphi_2$ then also $\varphi_1$ and $\varphi_3$ 
 are equivalent.
If $\alpha\in\mathsf{C}(\mathcal{P}_2,\mathcal{Q})$ and $\alpha\circ\varphi_2\colon\mathcal{T}_2\rightarrow\mathcal{Q}$ is a tree composition then also $\alpha\circ\omega\circ\varphi_1\colon\mathcal{T}_1\rightarrow\mathcal{Q}$ is a tree composition.
Another case: if $\varphi'_j$ is strictly equivalent to $\varphi_j$ , $j=1,2$, then $\varphi_1'$ and $\varphi_2'$ are equivalent. Finally, note that if $\beta\in\Aut(\mathcal{P}_1)$ then $\varphi_1$ and $\beta\circ\varphi_1$ are isomorphic.
}
\end{remark}

We recall that if $n$ is a positive integer, an {\em integer composition} of $n$ into $k$-parts is a $k$-tuple of positive integers $(a_1,a_2,\dotsc,a_k)$ such that $a_1+a_2+\dotsc+a_k=n$; an {\em integer partition} of $n$ into $k$ parts is a $k$-tuple of positive integers $(\lambda_1,\lambda_2,\dotsc,\lambda_k)$ such that $\lambda_1+\lambda_2+\dotsc+\lambda_k=n$ but also $\lambda_1\geq\lambda_2\geq\dotsb\geq\lambda_k$; see \cite[Chapter 1]{Sagan}). The {\em length} of $\lambda$ is the number $k$ of its parts and it will be denoted by $\ell(\lambda)\coloneqq k$. Moreover, $\lvert\lambda\rvert\coloneqq\lambda_1+\lambda_2+\dotsb+\lambda_k$ denotes the sum of its parts. That is, 
\begin{equation}\label{partnotat}
\lambda\text{ is a partition of }\lvert\lambda\rvert\text{ into }\ell(\lambda)\text{ parts.}
\end{equation} 
If $Y$ is a finite set, $\Sym(Y)$ denotes the group of all permutations of $Y$; cf. \cite[p. 4]{GoRo}. 

\begin{example}\label{exampleh1}
{\rm Suppose that $H(\mathcal{T})=1$, identify $\mathcal{T}$ with $V\coloneqq V_\mathcal{T}$ and $\mathcal{P}$ with $X\coloneqq V_\mathcal{P}$.
Then a $\mathcal{P}$-composition of $\mathcal{T}$ is just a surjective map $\varphi\colon V\rightarrow X$. It determines a  {\em set composition} (=an ordered partition: \cite[Section 2.2]{Sagan}) of $V$ into $\lvert X\rvert$ parts: 
\begin{equation}\label{setcomp}
V=\bigsqcup\limits_{x\in X}\varphi^{-1}(x).
\end{equation}
In other words, the elements of $X$ are distinguishable and if we order these elements linearly, say $X=\{x_1,x_2,\dotsc, x_k\}$, then we have an ordered sequence of non empty, pairwise disjoint subsets of $V$: $\left(\varphi^{-1}(x_1),\varphi^{-1}(x_2),\dotsc,\varphi^{-1}(x_k)\right)$
whose union is the whole $V$.
See entry 3 of the Twelvefold way in \cite[Section 1.4]{Stanley} and \cite[Sections 1.4]{Sagan}. If we set $a_k=\left\lvert\varphi^{-1}(x_k)\right\rvert$ then the integer composition $(a_1,a_2,\dotsc,a_k)$ is the {\em type} of the set composition \eqref{setcomp}. In most cases, we will avoid the numbering and we will use \eqref{setcomp} to indicate a set composition. 
We define the {\em quotient } of $\varphi:V\rightarrow X$ by taking a singleton $\{a\}$ and setting $\alpha:X\rightarrow \{a\}$, that is $\alpha(x)=a$ for all $x\in X$. This a trivial case of a more general construction in Section \ref{Secquotient}.

\begin{picture}(400,80)
\put(100,60){\circle*{2}}
\put(98,65){$\emptyset$}
\thicklines
\put(100,60){\line(-1,-1){40}}
\put(60,20){\circle*{2}}
\put(67,20){${\scriptstyle \dotsc}$}
\put(70,20){\ellipse{30}{15}}
\put(55,5){${\scriptstyle \varphi^{-1}(x)}$}
\put(100,60){\line(-1,-2){20}}
\put(80,20){\circle*{2}}
\put(97,20){${\scriptstyle \dotsc}$}
\put(100,60){\line(1,-2){20}}
\put(120,20){\circle*{2}}
\put(127,20){${\scriptstyle \dotsc}$}
\put(100,60){\line(1,-1){40}}
\put(140,20){\circle*{2}}
\put(98,0){$V$}

\put(238,65){$\emptyset$}
\put(240,60){\circle*{2}}
\thicklines
\put(240,60){\line(-1,-1){40}}
\put(200,20){\circle*{2}}
\put(200,15){${\scriptstyle x}$}
\put(240,60){\line(-1,-2){20}}
\put(220,20){\circle*{2}}
\put(235,20){${\scriptstyle \dotsc}$}
\put(240,60){\line(1,-2){20}}
\put(260,20){\circle*{2}}
\put(240,60){\line(1,-1){40}}
\put(280,20){\circle*{2}}
\put(238,0){$X$}


\put(378,65){$\emptyset$}
\put(380,60){\circle*{2}}
\thicklines
\put(380,60){\line(0,-1){40}}
\put(380,20){\circle*{2}}
\put(378,10){${\scriptstyle a}$}
\end{picture}

\noindent
Now $\mathsf{C}(V,X)$ is the set of all surjective maps $\varphi\colon V\rightarrow X$ and the automorphisms groups are just $\Sym(V)$ and $\Sym(X)$; see \cite[Chapter 1, Section 2]{Aigner}. As a particular case of Example \ref{Remobviousfacts2}, we get:
the orbits of $\Sym(V)$ coincide with the strict equivalence classes and their set may be parametrized by the {\em integer compositions} of $\lvert V\rvert$ into $\lvert X\rvert$ parts;  
the orbits of $\Sym(X)$ coincide with the isomorphism classes and also with the (unordered!) {\em partitions} of $V$ into $\lvert X\rvert$ parts;
 the orbits of the whole $\Sym(V)\times\Sym(X)$ are the equivalence classes and they may be parametrized by the {\em integer partitions} of $\lvert V\rvert$ into $\lvert X\rvert$ parts.
That is if $\varphi\in \mathcal{C}(V,X)$ and $x_1,x_2,\dotsc,x_k$ are the elements of $X$ numbered in such a way that  $\left\lvert \varphi^{-1}(x_1)\right \rvert\geq \left\lvert \varphi^{-1}(x_2)\right \rvert\geq\dotsb\geq \left\lvert \varphi^{-1}(x_k)\right \rvert$, then we set $\lambda_i\coloneqq\left\lvert \varphi^{-1}(x_i)\right \rvert$ and $\lambda=(\lambda_1,\lambda_2,\dotsc,\lambda_k)$ is the integer partition of $n\coloneqq\lvert V\rvert$ associated to the $\left(\Sym(V)\times\Sym(X)\right)$-orbit of $\varphi$. We also say that 
$V=\{\varphi^{-1}(x)\colon x\in X\}$
is a partition of $V$ of {\em type} $\lambda$ and this will be the standard notation to indicate an unordered partition of a set. 
}
\end{example}

\begin{example}\label{Examplesubtrees}{\rm
Suppose that $\mathcal{T}$ is a spherically homogeneous rooted tree of branching type ${\bf r}=(r_1,r_2, \ldots, r_h)$, choose integers $1\leq s_k\leq r_k$, $k=1,2,\dotsc, h$, with $s_k< r_k$ for at least one $k$, and set ${\bf s}=(s_1,s_2, \ldots, s_h)$. Define $\mathsf{V}({\bf r}, {\bf s})$ as the variety of all subtrees $\mathcal{S}\subseteq\mathcal{T}$ which are spherically homogeneous of branching type ${\bf s}$. In \cite{CST3} and \cite[Section 3.2.6]{book3} it is analyzed the permutation representation of $\Aut(\mathcal{T})$ on $\mathsf{V}({\bf r}, {\bf s})$, as an application of a more general construction, the generalized Johnson scheme. In particular, this permutation representation decomposes without multiplicity (i.e. gives rise to a Gelfand pair) and it is a rooted tree analogous of the permutation representation of $S_n$ on $S_n/(S_{n-m}\times S_m)$, also called Johnson scheme; \cite[Chapter 6]{book}. Now we want to describe the trees in $\mathsf{V}({\bf r}, {\bf s})$ by means of tree compositions. First of all, note that a tree $\mathcal{S}\in \mathsf{V}({\bf r}, {\bf s})$ has the following recursive description:
$\emptyset\in\mathcal{S}$ and if $u$ is a vertex of $\mathcal{S}$ at level $k-1$ then $\text{\rm Ch}_\mathcal{S}(u)$ is a subset of $\text{\rm Ch}_\mathcal{T}(u)$ of cardinality $s_k$, $k=1,2,\dotsc,h$.
Then we define the rooted tree $\mathcal{P}$ of height $h$ inductively as follows:
\begin{itemize}
\item
denote the root by $b_0$; for $k=1,2,\dotsc,h$, if $s_k<r_k$ then $\text{\rm Ch}_\mathcal{P}(b_{k-1})=\{a_k,b_k\}$ and $\mathcal{P}_{a_k}$ is a path of length $h-k$; if $s_k=r_k$ then $\text{\rm Ch}_\mathcal{P}(b_{k-1})=\{b_k\}$.
\end{itemize}

\noindent
Finally, to each $\mathcal{S}\in \mathsf{V}({\bf r}, {\bf s})$ we associate the tree composition $\varphi^\mathcal{S}\in\mathsf{C}(\mathcal{T},\mathcal{P})$ defined inductively in the following way:
\begin{itemize}
\item
if $u$ is a vertex of $\mathcal{S}$ at level $k-1$ then we set $\varphi^\mathcal{S}\left(\text{\rm Ch}_\mathcal{S}(u)\right)\coloneqq b_k$, while for all $v\in\text{\rm Ch}_\mathcal{T}(u)\setminus\text{\rm Ch}_\mathcal{S}(u)$ (if it is not empty) we set $\varphi^\mathcal{S}\left(T_v\right)\coloneqq \mathcal{P}_{a_k}$, $k=1,2,\dotsc,h$. 
\end{itemize}

\noindent
In the following picture, ${\bf r}=(3,3,2)$, ${\bf s}=(2,2,1)$ and the subtree $\mathcal{S}$ and its image in $\mathcal{P}$ are indicated by bolder lines and vertices.

\begin{picture}(400,120)
\put(150,110){\circle*{4}}
\put(148,115){${\scriptstyle\emptyset}$}
\put(128,115){${\scriptstyle\mathcal{T}}$}
\Thicklines
\put(150,110){\line(-3,-2){60}}
\put(90,70){\circle*{4}}
\thinlines
\put(150,110){\line(0,-1){40}}
\put(150,70){\circle*{2}}
\Thicklines
\put(150,110){\line(3,-2){60}}
\put(210,70){\circle*{4}}
\thinlines

\Thicklines
\put(90,70){\line(-1,-2){20}}
\put(70,30){\circle*{4}}
\put(90,70){\line(0,-1){40}}
\put(90,30){\circle*{4}}
\thinlines
\put(90,70){\line(1,-2){20}}
\put(110,30){\circle*{2}}

\put(150,70){\line(-1,-2){20}}
\put(130,30){\circle*{2}}
\put(150,70){\line(0,-1){40}}
\put(150,30){\circle*{2}}
\put(150,70){\line(1,-2){20}}
\put(170,30){\circle*{2}}

\put(210,70){\line(-1,-2){20}}
\put(190,30){\circle*{2}}
\Thicklines
\put(210,70){\line(0,-1){40}}
\put(210,30){\circle*{4}}
\put(210,70){\line(1,-2){20}}
\put(230,30){\circle*{4}}
\thinlines


\put(70,30){\line(-1,-6){5}}
\put(65,0){\circle*{2}}
\Thicklines
\put(70,30){\line(1,-6){5}}
\put(75,0){\circle*{4}}
\thinlines


\Thicklines
\put(90,30){\line(-1,-6){5}}
\put(85,0){\circle*{4}}
\thinlines
\put(90,30){\line(1,-6){5}}
\put(95,0){\circle*{2}}


\put(110,30){\line(-1,-6){5}}
\put(105,0){\circle*{2}}
\put(110,30){\line(1,-6){5}}
\put(115,0){\circle*{2}}


\put(130,30){\line(-1,-6){5}}
\put(125,0){\circle*{2}}
\put(130,30){\line(1,-6){5}}
\put(135,0){\circle*{2}}


\put(150,30){\line(-1,-6){5}}
\put(145,0){\circle*{2}}
\put(150,30){\line(1,-6){5}}
\put(155,0){\circle*{2}}


\put(170,30){\line(-1,-6){5}}
\put(165,0){\circle*{2}}
\put(170,30){\line(1,-6){5}}
\put(175,0){\circle*{2}}


\put(190,30){\line(-1,-6){5}}
\put(185,0){\circle*{2}}
\put(190,30){\line(1,-6){5}}
\put(195,0){\circle*{2}}


\Thicklines
\put(210,30){\line(-1,-6){5}}
\put(205,0){\circle*{4}}
\thinlines
\put(210,30){\line(1,-6){5}}
\put(215,0){\circle*{2}}


\put(230,30){\line(-1,-6){5}}
\put(225,0){\circle*{2}}
\Thicklines
\put(230,30){\line(1,-6){5}}
\put(235,0){\circle*{4}}
\thinlines


\put(350,110){\circle*{4}}
\put(348,115){${\scriptstyle\emptyset}$}
\put(338,115){${\scriptstyle\mathcal{P}}$}
\Thicklines
\put(350,110){\line(1,-2){55}}
\thinlines
\put(350,110){\line(-1,-2){55}}

\put(330,70){\circle*{2}}
\put(320,70){${\scriptstyle a_1}$}
\put(370,70){\circle*{4}}
\put(370,70){\line(-1,-2){35}}
\put(375,70){${\scriptstyle b_1}$}

\put(310,30){\circle*{2}}
\put(350,30){\circle*{2}}
\put(340,30){${\scriptstyle a_2}$}
\put(390,30){\circle*{4}}
\put(395,30){${\scriptstyle b_2}$}
\put(390,30){\line(-1,-2){15}}

\put(295,0){\circle*{2}}
\put(335,0){\circle*{2}}
\put(375,0){\circle*{2}}
\put(365,0){${\scriptstyle a_3}$}
\put(405,0){\circle*{4}}
\put(410,0){${\scriptstyle b_3}$}

\end{picture}

\noindent
In other words, if $v\in\mathcal{S}$ then $\varphi^\mathcal{S}(v)$ is a $b$-vertex, otherwise it is a descendant of a suitable $a$-vertex. We set $\Omega({\bf r}, {\bf s})\coloneqq\left\{\varphi^\mathcal{S}\colon \mathcal{S}\in\mathsf{V}({\bf r}, {\bf s})\right\}$ and clearly the map 
$\mathsf{V}({\bf r}, {\bf s})\ni\mathcal{S}\longmapsto\varphi^\mathcal{S}\in\Omega({\bf r}, {\bf s})$
is a bijection. Moreover, for all $g\in\Aut(\mathcal{T})$ and $\mathcal{S}\in \mathsf{V}({\bf r}, {\bf s})$, we have (cf. \eqref{action2Aut})
\[
\varphi^{g\mathcal{S}}=\varphi^\mathcal{S}\circ g^{-1}.
\]
Indeed, for all $u\in\mathcal{T}$, $\left(\varphi^\mathcal{S}\circ g^{-1}\right)(u)$  is a $b\text{-vertex }\Longleftrightarrow$ $g^{-1}(u)\in\mathcal{S} \Longleftrightarrow$ $u\in g\mathcal{S} \Longleftrightarrow$ $\varphi^{g\mathcal{S}}(u)$ is a $b\text{-vertex}$. 
Therefore the bijection $\mathcal{S}\mapsto\varphi^\mathcal{S}$ is equivariant, $\mathsf{V}({\bf r}, {\bf s})$ and $\Omega({\bf r}, {\bf s})$ are isomorphic as homogeneous $\Aut(\mathcal{T})$-spaces and the respective permutation representations are equivalent. 
}
\end{example}

\begin{example}\label{Exampleflags}{\rm
Now suppose again that $\mathcal{T}$ is a spherically homogeneous rooted tree of branching type ${\bf r}=(r_1,r_2, \ldots, r_h)$ and also that ${\bf S}=\left[s_i^j\right]_{1\leq i\leq h,\; 1\leq j\leq l}$
is an $h\times l$ matrix of positive integers such that $1\leq s_k^l\leq s_k^{l-1}\leq\dotsb\leq s_k^1\leq s_k^0= r_k$ for $k=1,2,\dotsc, h$; we assume that for each $0\leq j\leq l-1$ there exists at least one $k$ such that $s_k^{j+1}<s_k^j$. Then a {\em flag} $\mathcal{F}$ {\em of subtrees of} $\mathcal{T}$ {\em of type} ${\bf S}$ is an increasing sequence subtrees
\begin{equation}\label{flag}
\mathcal{F}\colon\;\mathcal{S}^{l+1}\coloneqq\{\emptyset\}\subseteq\mathcal{S}^l\subseteq\mathcal{S}^{l-1}\subseteq\dotsb\subseteq \mathcal{S}^1\subseteq\mathcal{S}^0\coloneqq \mathcal{T}
\end{equation}
such that $\mathcal{S}^j$ is spherically homogeneous of branching type $\mathcal{\bf s}^j\coloneqq (s_1^j,s_2^j\dotsc,s_h^j)$, $j=0,1,\dotsc,l-1,l$ (and $\mathcal{S}^{l+1}$ is just the root). We denote by $\mathsf{V}({\bf r},{\bf S})$ the variety of all flags $\mathcal{F}$ as in \eqref{flag}. The flag $\mathcal{F}$ has the following iterative description:
\begin{itemize}
\item
$V_\mathcal{T}=\bigsqcup\limits_{j=0}^{l}\left(V_{\mathcal{S}^j}\setminus V_{\mathcal{S}^{j+1}}\right)\quad$ and $\quad\lvert V_{\mathcal{S}^j}\rvert=s_1^j\;$, $j=0,1,\dotsc,l$.
\item
For $k=2,3\dotsc,h$, if $u\in\mathcal{S}^j\setminus\mathcal{S}^{j+1}$ with $0\leq j\leq l$ and $u$ is at level $k-1$ then 
\[
\text{\rm Ch}_\mathcal{T}(u)=\text{\rm Ch}_{\mathcal{S}^j}(u)\bigsqcup\left[\bigsqcup\limits_{i=0}^{j-1}\left(\text{\rm Ch}_{\mathcal{S}^i}(u)\setminus \text{\rm Ch}_{\mathcal{S}^{i+1}}(u)\right)\right]\quad\text{and}\quad\lvert \text{\rm Ch}_{\mathcal{S}^i}(u)\rvert=s_k^i,\; 0\leq i\leq j. 
\]
\end{itemize}

\noindent
After that we define a rooted tree $\mathcal{P}$ of height $h$ as follows. The vertices of $\mathcal{P}\setminus \{\emptyset\}$ are labeled with the integers $0,1,\dotsc,l$, distinct siblings have different labels and the label of $x\in \mathcal{P}$ is denoted by $\ell(x)$. Then inductively
\begin{itemize}
\item
$V_\mathcal{P}$ has $l+1$ vertices, labeled $0,1,\dotsc,l$;
\item
for $k=2,3,\dotsc,h$, if we have constructed $\mathcal{P}$ up to level $k-1$, $x$ is a vertices at level $k-1$ and $\ell(x)=j$ then $x$ has $j+1$ children, labeled $0,1,2,\dotsc,j$;
\item
if $s_k^{i-1}=s_k^i$, $1\leq k\leq h$, $1\leq i\leq l$, then we delete all the subtrees $\mathcal{P}_x$ such that $x$ is at level $k$ and $\ell(x)=i-1$.
\end{itemize}

\noindent
The following is the tree $\mathcal{P}$ for $h=3$ and $l=2$.

\begin{picture}(400,70)
\thicklines


\put(200,60){\circle*{4}}
\put(198,65){${\scriptstyle\emptyset}$}
\put(188,65){${\scriptstyle\mathcal{P}}$}

\put(200,60){\line(2,-1){120}}

\put(200,60){\line(-2,-1){120}}
\put(200,60){\line(-1,-1){60}}

\put(160,40){\circle*{2}}
\put(153,40){${\scriptstyle 0}$}
\put(180,40){\circle*{2}}
\put(183,37){${\scriptstyle 1}$}
\put(180,40){\line(-2,-1){80}}
\put(240,40){\circle*{2}}
\put(243,40){${\scriptstyle 2}$}
\put(240,40){\line(-2,-1){80}}
\put(240,40){\line(-1,-1){40}}
\put(120,20){\circle*{2}}
\put(113,20){${\scriptstyle 0}$}
\put(140,20){\circle*{2}}
\put(135,22){${\scriptstyle 0}$}
\put(160,20){\circle*{2}}
\put(163,17){${\scriptstyle 1}$}
\put(160,20){\line(-2,-1){40}}
\put(200,20){\circle*{2}}
\put(193,20){${\scriptstyle 0}$}
\put(220,20){\circle*{2}}
\put(223,17){${\scriptstyle 1}$}
\put(220,20){\line(-2,-1){40}}
\put(280,20){\circle*{2}}
\put(283,20){${\scriptstyle 2}$}
\put(280,20){\line(-2,-1){40}}
\put(280,20){\line(-1,-1){20}}

\put(80,0){\circle*{2}}
\put(73,0){${\scriptstyle 0}$}
\put(100,0){\circle*{2}}
\put(93,0){${\scriptstyle 0}$}
\put(120,0){\circle*{2}}
\put(113,0){${\scriptstyle 0}$}
\put(140,0){\circle*{2}}
\put(133,0){${\scriptstyle 1}$}
\put(160,0){\circle*{2}}
\put(153,0){${\scriptstyle 0}$}
\put(180,0){\circle*{2}}
\put(173,0){${\scriptstyle 1}$}
\put(200,0){\circle*{2}}
\put(193,0){${\scriptstyle 2}$}
\put(240,0){\circle*{2}}
\put(233,0){${\scriptstyle 0}$}
\put(260,0){\circle*{2}}
\put(253,0){${\scriptstyle 1}$}
\put(320,0){\circle*{2}}
\put(323,0){${\scriptstyle 2}$}

\end{picture}

\noindent
To each flag $\mathcal{F}$ we associate a tree composition $\varphi^\mathcal{F}\colon\mathcal{T}\rightarrow\mathcal{P}$ defined recursively

\begin{itemize}
\item
for $j=0,1,\dotsc,l$, $\varphi^\mathcal{F}\left(V_{\mathcal{S}^j}\setminus V_{\mathcal{S}^{j+1}}\right)\coloneqq$ the vertex of $V_\mathcal{P}$ with label $j$;
\item 
for $k=2,3,\dotsc,h$, if we have defined $\varphi^\mathcal{F}$ up to level $k-1$, $u$ is at level $k$ of $\mathcal{T}$, $v$ is the parent of $u$, $\varphi^\mathcal{F}(v)=x$ and $u\in \mathcal{S}^j\setminus \mathcal{S}^{j+1}$ then $\varphi^\mathcal{F}(u)\coloneqq$ the child of $x$ with label $j$.
\end{itemize}

\noindent
In other words, starting from the first level, the vertices in $\mathcal{S}^j\setminus \mathcal{S}^{j+1}$ are sent to the vertices of $\mathcal{P}$ with label $j$.
It is easy to check that $\varphi^\mathcal{F}$ is a tree composition; if we set $\Omega({\bf r},{\bf S})\coloneqq\left\{\varphi^\mathcal{F}\colon\mathcal{F}\in\mathsf{V}({\bf r},{\bf S})\right\}$ then the map 
$\mathsf{V}({\bf r},{\bf S})\ni\mathcal{F}\longmapsto\varphi^\mathcal{F}\in\Omega({\bf r},{\bf S})$
is a bijection. Moreover, for all $g\in\Aut(\mathcal{T})$ and $\mathcal{F}\in \mathsf{V}({\bf r}, {\bf S})$, we have 
\[
\varphi^{g\mathcal{F}}=\varphi^\mathcal{F}\circ g^{-1}.
\]
Indeed, for all $u\in\mathcal{T}$,
$\;\ell\left(\left(\varphi^\mathcal{F}\circ g^{-1}\right)(u)\right)= j\Longleftrightarrow$ $g^{-1}(u)\in\mathcal{S}^j\setminus \mathcal{S}^{j+1}\Longleftrightarrow$ $u\in \left(g\mathcal{S}^j\right)\setminus\left(g\mathcal{S}^{j+1}\right)
 \Longleftrightarrow$ $\ell\left(\left(\varphi^{g\mathcal{F}}\right)(u)\right)=j$.
That is, the bijection $\mathcal{F}\mapsto\varphi^\mathcal{F}$ is equivariant, $\mathsf{V}({\bf r},{\bf S})$ and $\Omega({\bf r},{\bf S})$ are isomorphic as homogeneous $\Aut(\mathcal{T})$-spaces and the respective permutation representations are equivalent. 
}
\end{example}

\section{The quotient of a tree composition}\label{Secquotient}

In this section we introduce the fundamental construction of the quotient composition. We need a preliminary notion. Let $\mathcal{T}$ be a rooted tree with $H(\mathcal{T})\geq2$ and $V_\mathcal{T}\setminus W_\mathcal{T}=\bigsqcup_{a\in A}U_a$ a set composition of its first level with the property that: if $u,v$ belong to the same part then $\mathcal{T}_u$ and $\mathcal{T}_v$ are isomorphic. A {\em first level set of isomorphisms adapted to this composition} is a family $\tau_{u,v}\colon\mathcal{T}_v\rightarrow\mathcal{T}_u$, $u,v\in U_a,a\in A$, of isomorphisms such that and if also $t\in U_a$ then
\begin{equation}\label{proptau2b}
\tau_{w,u}\circ\tau_{u,t}=\tau_{w,t},\qquad\tau_{u,w}^{-1}=\tau_{w,u}\quad\text{\rm and }\quad\tau_{u,u}=\id_{\mathcal{T}_u}\quad (\text{\rm composition properties}).
\end{equation}

\begin{theorem}\label{Prop1p5}
If $\varphi\in\mathsf{C}(\mathcal{T}, \mathcal{P})$ then there exist a tree $\mathcal{Q}$ and a composition $\alpha\in\mathsf{C}(\mathcal{P},\mathcal{Q})$ (that we call {\em the quotient of} $\varphi$) such that
\begin{enumerate}[\rm(a)]
\item\label{Prop1p51}
if $x\in\mathcal{P}$ is internal and $Y$ is the set of all children of $x$ which are leaves then $\alpha_x(Y)=a$, a singleton $a\in \mathcal{Q}$
which is the unique child of $\alpha(x)$ which is a leaf.

\item\label{Prop1p52} If $a,b\in \mathcal{Q}$ are {\em distinct siblings} which are are internal, then for all pairs of siblings $u,v\in\mathcal{T}$ such that $\alpha(\varphi(u))=a$, $\alpha(\varphi(v))=b$ the tree compositions $\varphi_u\colon\mathcal{T}_u\rightarrow \mathcal{P}_{\varphi(u)}$ and $\varphi_v\colon\mathcal{T}_v\rightarrow \mathcal{P}_{\varphi(v)}$ {\em are not} equivalent.

\item\label{Prop1p53}
If $u,w\in\mathcal{T}$ are internal and $\alpha(\varphi(u))=\alpha(\varphi(w))\eqqcolon a$ then the tree compositions $\varphi_u\colon\mathcal{T}_u\rightarrow \mathcal{P}_{\varphi(u)}$ and $\varphi_w\colon\mathcal{T}_w\rightarrow \mathcal{P}_{\varphi(w)}$ are equivalent while $\alpha_{\varphi(u)}\circ \varphi_u\colon\mathcal{T}_u\rightarrow \mathcal{Q}_{\alpha(\varphi(u))}$ and $\alpha_{\varphi(w)}\circ \varphi_w\colon\mathcal{T}_w\rightarrow \mathcal{Q}_{\alpha(\varphi(w))}$ are strictly equivalent. In particular, $\alpha\circ \varphi\colon\mathcal{T}\rightarrow\mathcal{Q}$ is a tree composition.

\item\label{Prop1p53bis}
There exist first level sets of isomorphisms $\tau_{u,v}$, adapted to the set composition $V_\mathcal{T}=\bigsqcup_{a\in V_\mathcal{Q}}(\alpha\circ\varphi)^{-1}(a)$, and $\omega_{x,y}$, adapted to the set composition $V_\mathcal{P}=\bigsqcup_{a\in V_\mathcal{Q}}\alpha^{-1}(a)$, such that such that the following diagram is commutative:
\begin{equation}\label{diagTRSuv}
\xymatrix@R=2pt{
\mathcal{T}_u\ar[rr]^{\varphi_u}\ar[dd]_{\tau_{w,u}} && \mathcal{P}_{\varphi(u)}\ar[dr]^{\alpha_{\varphi(u)}}\ar[dd]_{\omega_{\varphi(w),\varphi(u)}}&\\
&&&\mathcal{Q}_a\\
\mathcal{T}_w\ar[rr]^{\varphi_w} && \mathcal{P}_{\varphi(w)}\ar[ur]_{\alpha_{\varphi(w)}},&
}
\end{equation}
for all $a\in V_\mathcal{Q}\setminus W_\mathcal{Q}$, $u,v\in(\alpha\circ\varphi)^{-1}(a)$. In particular, 
\begin{equation}\label{diagTRSuv2b}
\text{ if } u,v\in V_\mathcal{T}\text{ and }\varphi(u)=\varphi(w)\text{ then }\tau_{u,w}\text{ is a strict equivalence}.
\end{equation}

\item\label{Prop1p54}
The quotient is {\em unique up to isomorphisms}: if $\beta\colon\mathcal{P}\rightarrow \mathcal{B}$ is another quotient then there exists a rooted tree isomorphism $\gamma\colon\mathcal{Q}\rightarrow \mathcal{B}$ such that the following diagram is commutative:
\[
\xymatrix@R=5pt{
&\mathcal{Q}\ar[dd]^{\gamma}  \\
\mathcal{P}\ar[ur]^{\alpha}\ar[dr]_{\beta} &\\
&\mathcal{B}}
\]
\end{enumerate}
\end{theorem}
\noindent
In the following diagrams we indicate the first level compositions corresponding to the maps $\mathcal{T}\stackrel{\varphi}{\longrightarrow}\mathcal{P}\stackrel{\alpha}{\longrightarrow}\mathcal{Q}$.
\[
\xymatrix@R=10pt{
{\scriptstyle V_\mathcal{T}=\bigsqcup\limits_{x\in V_\mathcal{P}}\varphi^{-1}(x)=\bigsqcup\limits_{a\in V_\mathcal{Q}}\bigsqcup\limits_{x\in \alpha^{-1}(a)}\varphi^{-1}(x)}&{\scriptstyle V_\mathcal{P}=\bigsqcup\limits_{a\in V_\mathcal{Q}}\alpha^{-1}(a)}\ar[l]&\ar[l]{\scriptstyle V_\mathcal{Q}}\\
} 
\]
\begin{proof}
By induction on $H(\mathcal{T})$. For $H(\mathcal{T})=1$ it is trivial: condition \eqref{Prop1p51} forces $\lvert V_\mathcal{Q}\rvert=1$, $\alpha$ sends the whole $V_\mathcal{P}$ to the unique vertex in $V_\mathcal{Q}$ and the $\tau$'s and the $\omega$'s do not exist; cf. Example \ref{exampleh1}. For $H(\mathcal{T})\geq 2$ again \eqref{Prop1p51} forces $\lvert W_\mathcal{Q}\rvert\leq1$ and $\alpha$ sends the whole $W_\mathcal{P}$ to the unique vertex in $W_\mathcal{Q}$ (if $W_\mathcal{T}$ is not empty). Then we introduce the following equivalence relation: for $x,x'\in V_\mathcal{P}\setminus W_\mathcal{P}$ we set $x\sim x'$ if 
\[
u,v\in V_\mathcal{T},\;\varphi(u)=x \text{ and }\varphi(v)=x'\;\Longrightarrow\;\varphi_u\colon \mathcal{T}_u\rightarrow\mathcal{P}_x\text{ and }\varphi_v\colon \mathcal{T}_v\rightarrow\mathcal{P}_{x'}\text{ are equivalent; }
\]
by Remark \ref{Remobviousfacts}, it suffices to verify this condition for just one choice of $u\in\varphi^{-1}(x),v\in\varphi^{-1}(y)$, and $\sim$ is indeed an equivalence relation. 
Set $A\coloneqq \left(V_\mathcal{P}\setminus W_\mathcal{P}\right)/\sim$ and let $\pi\colon V_\mathcal{P}\setminus W_\mathcal{P}\rightarrow A$ be the projection map, so that 
\begin{equation}\label{2quad5}
V_\mathcal{P}\setminus W_\mathcal{P}=\bigsqcup_{a\in A}\pi^{-1}(a),\qquad V_\mathcal{T}\setminus W_\mathcal{T}=\bigsqcup_{a\in A}(\pi\circ \varphi)^{-1}(a)=\bigsqcup_{a\in A}\left(\bigsqcup_{x\in\pi^{-1}(a)}\varphi^{-1}(x)\right).
\end{equation}

Now we define the $\tau$'s, the $\omega$'s and also $\alpha$ at the first level.
First of all, for each $a\in A$ fix an element $x_a\in\pi^{-1}(a)$ and then for all $y\in\pi^{-1}(a)$, fix an element $v_y\in\varphi^{-1}(y)$.
If we consider the tree compositions $\varphi_{v_{x_a}}\colon\mathcal{T}_{v_{x_a}}\rightarrow\mathcal{P}_{\varphi(v_{x_a})}$, $a\in A$, then, by the inductive hypothesis, there exist unique (up to isomorphisms) tree compositions $\alpha_{\varphi(v_{x_a})}\colon\mathcal{P}_{\varphi(v_{x_a})}\rightarrow \mathcal{Q}_a$ that satisfy the properties in \eqref{Prop1p51}, \eqref{Prop1p52}, and \eqref{Prop1p53}. If $y\in \pi^{-1}(a)$ and $u\in\varphi^{-1}(y)$ then, by \eqref{diagTRSuv2}, $\varphi_u$ and $\varphi_{v_y}$ are strictly equivalent, so that we may fix an isomorphism $\tau_{v_y,u}\colon\mathcal{T}_u\rightarrow\mathcal{T}_{v_y}$ such that 
\begin{equation}\label{tauomfl1}
\varphi_u=\varphi_{v_y}\circ\tau_{v_y,u}
\end{equation}
and also set $\tau_{u,v_y}\coloneqq\tau_{v_y,u}^{-1}$ and $\tau_{v_y,v_y}\coloneqq\id_{\mathcal{T}_{v_y}}$. Moreover, if again $y\in\pi^{-1}(a)$, then $\varphi_{v_y}$ and $\varphi_{v_{x_a}}$ are equivalent (by the definition of $\sim$) so that there we may fix isomorphisms $\tau_{v_y,v_{x_a}}\colon\mathcal{T}_{v_{x_a}}\rightarrow\mathcal{T}_{v_y}$ and $\omega_{y,x_a}\colon\mathcal{P}_{x_a}\rightarrow\mathcal{P}_y$ such that (cf. \eqref{defequiviso})
\begin{equation}\label{tauomfl2}
\varphi_{v_y}\circ\tau_{v_y,v_{x_a}}=\omega_{y,x_a}\circ\varphi_{v_{x_a}}. 
\end{equation}
We also set $\tau_{v_{x_a},v_y}\coloneqq\tau_{v_y,v_{x_a}}^{-1}$, $\omega_{x_a,y}\coloneqq\omega_{y,x_a}^{-1}$ and $\omega_{x_a,x_a}\coloneqq\id_{\mathcal{P}_{x_a}}$.

If $u,w\in(\pi\circ\varphi)^{-1}(a)$, $y,z\in\alpha^{-1}(a)$, then using the $\tau$'s and the $\omega$'s in \eqref{tauomfl1} and \eqref{tauomfl2} we set
\begin{equation}\label{deftauomega0}
\tau_{w,u}\coloneqq\tau_{w,v_z}\circ\tau_{v_z,v_{x_a}}\circ\tau_{v_{x_a},v_y}\circ\tau_{v_y,u}\quad\text{ and }\quad\omega_{z,y}\coloneqq\omega_{z,x_a}\circ\omega_{x_a,y}. 
\end{equation}
See also the following diagram. 
\begin{equation}\label{bigdiagp12}
\xymatrix@R=15pt{
\mathcal{T}_u\ar@/_1.5pc/[dddd]_(.55){\tau_{w,u}}\ar[rrrrd]^{\varphi_u}\ar[d]^{\tau_{v_y,u}} &&&\\
\mathcal{T}_{v_y}\ar[d]^{\tau_{v_y,v_{x_a}}}\ar[rrrr]_{\varphi_{v_y}} &&&& \mathcal{P}_y\ar[d]_{\omega_{x_a,y}}\ar@/^1.5pc/[dd]^(.45){\omega_{z,y}}\\
\mathcal{T}_{v_{x_a}}\ar[rrrr]_{\varphi_{v_{x_a}}}\ar[d]^{\tau_{v_z,v_{x_a}}} &&&& \mathcal{P}_{x_a}\ar[d]_{\omega_{z,x_a}}\\
\mathcal{T}_{v_z}\ar[d]^{\tau_{w,v_z}}\ar[rrrr]^{\varphi_{v_z}} &&&& \mathcal{P}_z\\
\mathcal{T}_{w}\ar[rrrru]_{\varphi_w} &&&& 
}
\end{equation}

It is obvious that $\omega_{x,x}=\id_{\mathcal{P}_x}$. Also the composition properties \eqref{proptau2b} are easy to check while, from the particular cases \eqref{tauomfl1} and \eqref{tauomfl2}, it follows that
\begin{equation}\label{tauomfl}
\begin{split}
\varphi_w\circ\tau_{w,u}=&\left(\varphi_{v_z}\circ\tau_{v_z,w}\right)\circ\left(\tau_{w,v_z}\circ\tau_{v_z,v_{x_a}}\circ\tau_{v_{x_a},v_y}\circ\tau_{v_y,u}\right)\\
=&\omega_{z,x_a}\circ\varphi_{v_{x_a}}\circ\tau_{v_{x_a},v_y}\circ\tau_{v_y,u}\\
=&\omega_{z,x_a}\circ\omega_{x_a,y}\circ\varphi_{v_y}\circ\tau_{v_y,u}\\
=&\omega_{\varphi(w),\varphi(u)}\circ\varphi_u,
\end{split}
\end{equation}
that is the square in \eqref{diagTRSuv} is commutative and \eqref{tauomfl} proves the full commutativity of \eqref{bigdiagp12}. 

Now we can complete the definition of $\alpha$ and $\mathcal{Q}$: if $a\in V_\mathcal{Q}\setminus W_\mathcal{Q}$ then for all $v\in (\alpha\circ\varphi)^{-1}(a)$ we define a surjective tree homomorphism $\alpha_{\varphi(v)}\colon\mathcal{P}_{\varphi(v)}\rightarrow\mathcal{Q}_a$ by setting:
\begin{equation}\label{alphomfl}
\alpha_{\varphi(u)}\coloneqq\alpha_{\varphi(v_{x_a})}\circ \omega_{\varphi(v_{x_a}),\varphi(u)}.
\end{equation}
The tree $\mathcal{Q}$ is defined by means of its first level decomposition \eqref{firstld}:
\[
\mathcal{Q}\coloneqq\{\emptyset\}\sqcup \left(A\sqcup W_\mathcal{Q}\right)\sqcup \left[\bigsqcup_{a\in A}\left(\{a\}\times \widetilde{\mathcal{Q}}_a\right)\right]
\]
and the surjective rooted tree homomorphism $\alpha\colon\mathcal{P}\rightarrow\mathcal{Q}$ by means of \eqref{firstldell}, requiring that $\alpha$ sends the whole $W_\mathcal{P}$ to the unique vertex in $W_\mathcal{Q}$, $\alpha$ is equal to $\pi$ on $V_\mathcal{P}\setminus W_\mathcal{P}$, $\alpha$ is equal to $\alpha_x$ on $\mathcal{P}_x$, $x\in V_\mathcal{P}\setminus W_\mathcal{P}$. From \eqref{tauomfl} and \eqref{alphomfl} we deduce the commutative diagram \eqref{diagTRSuv}. 

At the first level, \eqref{Prop1p51} and \eqref{Prop1p52} are verified by the definition of $\sim$ and of $\alpha$ on $V_\mathcal{T}\setminus W_\mathcal{T}$ while \eqref{Prop1p53} requires also \eqref{diagTRSuv}.
From \eqref{tauomfl}, \eqref{alphomfl}, the inductive hypothesis applied to the $\mathcal{T}_{v_{x_a}}$'s and Remark \ref{Remobviousfacts}, it follows that \eqref{Prop1p51} and \eqref{Prop1p52} are verified also at the lower levels. We limit ourselves to proving \eqref{Prop1p53}. If $u,w$ are as in the statement and $u\in\mathcal{T}_{\widetilde{u}}$, $w\in\mathcal{T}_{\widetilde{w}}$ with $\widetilde{u},\widetilde{w}\in V_\mathcal{T}$, we set $u'\coloneqq\tau_{v_{x_a},\widetilde{u}}(u)$ and $w'\coloneqq\tau_{v_{x_a},\widetilde{w}}(w)$. Then
\[
\begin{split}
(\alpha\circ\varphi)(u')=&\left(\alpha_{\varphi(v_{x_a})}\circ \varphi_{v_{x_a}}\circ\tau_{v_{x_a},\widetilde{u}}\right)(u)\\
(\text{by }\eqref{tauomfl})\quad=&\left(\alpha_{\varphi(v_{x_a})}\circ\omega_{\varphi(v_{x_a}),\varphi(\widetilde{u})}\circ \varphi_{\widetilde{u}}\right)(u)\\
(\text{by }\eqref{alphomfl}\text{ and }\eqref{firstldell2})\quad=&\left(\alpha_{\varphi(\widetilde{u})}\circ \varphi_{\widetilde{u}}\right)(u)=(\alpha\circ\varphi)(u)=a\\
\end{split}
\]
and similarly $(\alpha\circ\varphi)(w')=a$. By the inductive hypothesis, $\varphi_{u'}$ and $\varphi_{w'}$ are equivalent while $\alpha_{\varphi(u')}\circ\varphi_{u'}$ and $\alpha_{\varphi(w')}\circ\varphi_{w'}$ are strictly equivalent (we are within $\mathcal{T}_{v_{x_a}}$). From \eqref{tauomfl} we deduce also that
\[
\varphi_u=\left[\varphi_{\widetilde{u}}\right]_u=\left[\omega_{\varphi(\widetilde{u}),\varphi(v_{x_a})}\right]_{\varphi(u')}\circ\varphi_{u'}\circ\left[\tau_{v_{x_a},\widetilde{u}}\right]_u,
\]
and therefore $\varphi_u$ and $\varphi_{u'}$ are equivalent. Similarly, $\varphi_w$ and $\varphi_{w'}$ are equivalent so that also $\varphi_u$ and $\varphi_w$ are equivalent. Finally, from \eqref{alphomfl} it follows that
\[
\alpha_{\varphi(u)}\circ\varphi_{u}=\left[\alpha_{\varphi(\widetilde{u})}\circ\omega_{\varphi(\widetilde{u}),\varphi(v_{x_a})}\right]_{\varphi(u')}\circ\varphi_{u'}\circ\left[\tau_{v_{x_a},\widetilde{u}}\right]_u=\alpha_{\varphi(u')}\circ\varphi_{u'}\circ\left[\tau_{v_{x_a},\widetilde{u}}\right]_u
\]
so that $\alpha_{\varphi(u)}\circ\varphi_u$ and $\alpha_{\varphi(u')}\circ\varphi_{u'}$ are strictly equivalent. Similarly, $\alpha_{\varphi(w)}\circ\varphi_w$ and $\alpha_{\varphi(u')}\circ\varphi_{w'}$ are strictly equivalent so that also $\alpha_{\varphi(u)}\circ\varphi_u$ and $\alpha_{\varphi(w)}\circ\varphi_w$ are strictly equivalent.

From the commutativity of \eqref{diagTRSuv} and from \eqref{Prop1p53} we deduce that both $\alpha$ and $\alpha\circ\varphi$ are tree compositions. The uniqueness of $\alpha$ up to isomorphisms follows from the uniqueness of $\pi$ and of each of the $\alpha_x\colon\mathcal{P}_x\rightarrow \mathcal{Q}_a$, $x\in\pi^{-1}(a)$, $a\in V_\mathcal{Q}$ (by induction). 
\end{proof}

\begin{definition}\label{Defpseudo}{\rm
Those in Theorem \ref{Prop1p5}\eqref{Prop1p53bis} will be called {\em a first level coordinate system of automorphism} $\tau$'s and $\omega$'s for the tree composition $\varphi$.}
\end{definition}

\begin{remark}\label{quotstequiv}{\rm
By means of the results in Remark \ref{Remobviousfacts} it is easy to check that if $\varphi\in\mathsf{C}(\mathcal{T},\mathcal{P})$, $\psi\in\mathsf{C}(\mathcal{T}',\mathcal{P})$ are strictly equivalent and $\alpha\in\mathsf{C}(\mathcal{P},\mathcal{Q})$ is the quotient of $\varphi$ then $\alpha$ is also the quotient of $\psi$. Similarly, suppose that $\varphi$ and $\psi$ are equivalent and $\alpha$, $\beta$ are their respective quotients, as in the following diagram, where $\tau$ and $\omega$ are isomorphisms, the left square is commutative and $\gamma$ is to be defined. 
\begin{equation}\label{Rtauomeg}
\xymatrix{
\mathcal{T}\ar[d]_\tau\ar[r]^{\varphi}&\mathcal{P}\ar[d]_\omega\ar[r]^{\alpha}&\mathcal{Q}\ar[d]_\gamma\\
\mathcal{T}'\ar[r]_{\psi}&\mathcal{P}'\ar[r]_{\beta}&\mathcal{Q}'
}
\end{equation}
Again by means of Remark \ref{Remobviousfacts}, it easy easy to check that also $\beta\circ\omega\colon\mathcal{P}\rightarrow\mathcal{Q}'$ is a quotient of $\varphi$.
Then from Theorem \ref{Prop1p5}\eqref{Prop1p54} it follows that there exists an isomorphism $\gamma$ such that the entire diagram is commutative. 
}
\end{remark}

\begin{example}\label{Examplequotient}{\rm
In Example \ref{Examplesubtrees} the quotient $\mathcal{Q}$ is obtained from $\mathcal{P}$ by deleting $a_h$, $\alpha\colon\mathcal{P}\rightarrow\mathcal{Q}$ sends $a_h$ to $b_h$ and it fixes all the other vertices. In Example \ref{Exampleflags} the quotient $\mathcal{Q}$ is equal to $\mathcal{P}$ up to level $h-1$; if $x\in\mathcal{P}$ is at level $h-1$ and $\ell(x)=j\geq 1$ then the $j+1$ children of $x$ are replaced by just one vertex which is the image of all the children of $x$.
}
\end{example}

\begin{example}\label{idtreecomp}{\rm
If $\mathcal{T}=\mathcal{P}$ and $\varphi=\id_\mathcal{T}$ is the identity composition then the relative quotient is called the {\em trivial composition}  $\tv_\mathcal{T}\colon\mathcal{T}\rightarrow \mathcal{C}$. It is characterized by the following properties:
\begin{itemize} 
\item
If $c,c'\in \mathcal{C}$ are distinct siblings which are internal, then for all pairs of siblings $u,v\in\mathcal{T}$ such that $\tv_\mathcal{T}(\varphi(u))=c$, $\tv_\mathcal{T}(\varphi(v))=c'$ the trees $\mathcal{T}_u$ and $\mathcal{T}_v$ are not isomorphic;
\item
if $v\in\mathcal{T}$ and $Y\subseteq\mathcal{T}_v$ is the subsets of all children of $v$ which are leaves then $\left(\tv_\mathcal{T}\right)_v$ sends the whole $Y$ to a singleton $c\in \mathcal{C}$, which is the unique child of $\tv_\mathcal{T}(v)$ which is a leaf;
\item
if $u,v\in\mathcal{T}$ are internal and $\tv_\mathcal{T}(u)=\tv_\mathcal{T}(v)$ then $\mathcal{T}_u$ and $\mathcal{T}_v$ are isomorphic. 
\end{itemize}
An equivalent description is the following: $\mathcal{C}_\mathcal{T}$ coincides with the quotient graph $\mathcal{T}/\Aut(\mathcal{T})$ (cf. \cite[Section 3.1]{Serre}); that is, the vertices of $\mathcal{C}_\mathcal{T}$ are the orbits of $\Aut(\mathcal{T})$ on $\mathcal{T}$, two orbits $c,c'\in\mathcal{C}_\mathcal{T}$ are joined if there exist $u\in c, u'\in c'$ such that $u,u'$ are joined in $\mathcal{T}$ and $\tv_\mathcal{T}(u)\coloneqq$ the $\Aut(\mathcal{T})$-orbit containing $u$, for every $u\in\mathcal{T}$.
In particular, $\mathcal{C}$ is a path if and only if $\mathcal{T}$ is spherically homogeneous.  Moreover, we have the following iterative description of $\Aut(\mathcal{T})$, which generalizes \eqref{Autiterwreath}:
\begin{equation}\label{Autp17}
\Aut(\mathcal{T})\cong\Sym(W_\mathcal{T})\prod\left\{\prod_{c\in V_\mathcal{C}\setminus W_\mathcal{C}}\left[\Aut\left(\mathcal{T}_{u_c}\right)\wr\Sym\left(\tv_\mathcal{T}^{-1}(c)\right)\right]\right\},
\end{equation}
where $\prod$ indicates a direct product of groups and $u_c$ is the choice of an element in $\tv_\mathcal{T}^{-1}(c)$. Finally, note that $\tv_\mathcal{T}\circ g=\tv_\mathcal{T}$ for all $g\in\Aut(\mathcal{T})$.
}
\end{example}

In what follow, $\Aut(\mathcal{T})\rvert_{V_\mathcal{T}}$ will denote the set of all restrictions $g\rvert_{V_\mathcal{T}}$, $g\in\Aut(\mathcal{T})$, so that from \eqref{Autp17} we deduce that
\begin{equation}\label{squaresigmap17}
\Aut(\mathcal{T})\rvert_{V_\mathcal{T}}\cong\Sym(W_\mathcal{T})\prod\left\{\prod_{c\in V_\mathcal{C}\setminus W_\mathcal{C}}\Sym\left(\tv_\mathcal{T}^{-1}(c)\right)\right\}.
\end{equation}

\begin{proposition}\label{Propepsphitv}
With the notation in Example \ref{idtreecomp}, for every $\varphi\in\mathsf{C}(\mathcal{T},\mathcal{P})$ there exists a surjective tree homomorphism $\varepsilon\colon\mathcal{P}\rightarrow\mathcal{C}$ (which in general is not a tree composition) such that $\varepsilon\circ\varphi=\tv_\mathcal{T}$.
\end{proposition}
\begin{proof}
By induction on $H(\mathcal{T})$. For $H(\mathcal{T})=1$ (and $W_\mathcal{T}$) see Example \ref{exampleh1}. If $H(\mathcal{T})>1$, for every $c\in V_\mathcal{C}\setminus W_\mathcal{C}$ we apply the inductive hypothesis to the trees $\mathcal{T}_u$, $u\in \tv_\mathcal{T}^{-1}(c)$, getting maps $\varepsilon_x\colon\mathcal{P}_x\rightarrow\mathcal{C}_c$, $x\in \varphi\left(\tv_\mathcal{T}^{-1}(c)\right)$ such that if $u\in \varphi^{-1}(x)$ then $\varepsilon_x\circ\varphi_u=\tv_{\mathcal{T}_u}$ (the subtrees $\mathcal{T}_u$, $u\in\varphi^{-1}(x)$ are all isomorphic and $\tv$ is invariant under right multiplication by automorphisms); we complete the definition of $\varepsilon$ by setting $\varepsilon(x)=c$.
\end{proof}
\begin{remark}{\rm
In general, the map $\varepsilon$ in Proposition \ref{Propepsphitv} is not a tree composition. We give a very simple example, the trees below with $\varphi(u_1)=x_1$, $\varphi(u_2)=x_2$, $\varphi(u_3)=\varphi(u_4)=x_3$, $\varepsilon(x_1)=\varepsilon(x_2)=\varepsilon(x_3)=c$.

\begin{picture}(400,80)
\put(80,65){${\scriptstyle\mathcal{T}}$}
\put(100,60){\circle*{2}}
\put(98,65){${\scriptstyle\emptyset}$}
\thicklines
\drawline(100,60)(80,40)
\put(80,40){\circle*{2}}
\drawline(100,60)(120,40)
\put(120,40){\circle*{2}}
\drawline(80,40)(70,20)
\put(70,20){\circle*{2}}
\put(65,12){${\scriptstyle u_1}$}
\drawline(80,40)(90,20)
\put(90,20){\circle*{2}}
\put(85,12){${\scriptstyle u_2}$}

\drawline(120,40)(110,20)
\put(110,20){\circle*{2}}
\put(105,12){${\scriptstyle u_3}$}
\drawline(120,40)(130,20)
\put(130,20){\circle*{2}}
\put(125,12){${\scriptstyle u_4}$}

\put(230,65){${\scriptstyle\mathcal{P}}$}
\put(250,60){\circle*{2}}
\put(248,65){${\scriptstyle\emptyset}$}
\thicklines
\drawline(250,60)(230,40)
\put(230,40){\circle*{2}}
\drawline(250,60)(270,40)
\put(270,40){\circle*{2}}
\drawline(230,40)(220,20)
\put(220,20){\circle*{2}}
\put(215,12){${\scriptstyle x_1}$}
\drawline(230,40)(240,20)
\put(240,20){\circle*{2}}
\put(235,12){${\scriptstyle x_2}$}

\drawline(270,40)(270,20)
\put(270,20){\circle*{2}}
\put(265,12){${\scriptstyle x_3}$}

\put(365,65){${\scriptstyle\mathcal{C}}$}
\put(378,65){${\scriptstyle\emptyset}$}
\put(380,60){\circle*{2}}
\thicklines
\put(380,60){\line(0,-1){40}}
\put(380,40){\circle*{2}}
\put(380,20){\circle*{2}}
\put(378,10){${\scriptstyle c}$}
\end{picture}
}
\end{remark}

\section{Trees of partitions}\label{Sectreeofpart}

The {\em type} of a tree composition $\varphi\in\mathsf{C}(\mathcal{T},\mathcal{P})$ is the function $\lvert\varphi^{-1}\rvert\colon\mathcal{P}\rightarrow \mathbb{N}$ defined by setting $\lvert\varphi^{-1}\rvert(\emptyset)=0$ and, for all $y\in \mathcal{P}\setminus{\emptyset}$,
\begin{equation}\label{typepart}
\lvert\varphi^{-1}\rvert (y)\coloneqq\left\lvert \varphi^{-1}(y)\cap\text{Ch}(u)\right\rvert,
\end{equation}
where $u$ is any counter-image of the parent $x$ of $y$; by the regularity condition in Definition \ref{deftreecomp}, the cardinality of $\varphi^{-1}(y)\cap\text{Ch}(u)$ does not depend on the choice of $u$. If $u\in\mathcal{T}$ then the type of $\varphi_u\colon\mathcal{T}_u\rightarrow\mathcal{P}_{\varphi(u)}$ is just the restriction of $\left\lvert\varphi^{-1}\right\rvert$ to $\mathcal{P}_{\varphi(u)}$. For a statement similar to the next Theorem, see \cite[Lemma 3.5]{GNS}.

\begin{theorem}\label{lemma2p15}
Two tree compositions $\varphi\in\mathsf{C}(\mathcal{T},\mathcal{P})$ and $\psi\in\mathsf{C}(\mathcal{T}',\mathcal{P})$ are strictly equivalent if and  only if they are of the same type. 
\end{theorem}
\begin{proof}
The only if part is obvious. Now suppose that $\varphi$ and $\psi$ are of the same type. We want to show that there exists  a tree isomorphism $\tau\colon\mathcal{T}\rightarrow\mathcal{T}'$ such that $\varphi=\psi\circ \tau$,
\[\xymatrix@R=5pt{
\mathcal{T}\ar[dr]^{\varphi}\ar[dd]_{\tau} & \\
&\mathcal{P}\\
\mathcal{T}'\ar[ur]_{\psi}&}
\] 
By induction on $H(\mathcal{T})$, the number of levels of these trees. If $H(\mathcal{T})=1$ this is trivial (cf. Example \ref{exampleh1} and also the definition of $\tau\rvert_{W_\mathcal{T}}$ below). Suppose $H(\mathcal{T})>1$ and consider the first level decomposition \eqref{firstld}. There exists a bijection $\pi\colon V_\mathcal{T}\rightarrow V_{\mathcal{T}'}$ such that 
\begin{equation}\label{formp13bis}
\pi\left(\varphi^{-1}(x)\right)=\psi^{-1}(x), \quad\text{ for all }x\in V_\mathcal{P},
\end{equation}
because $\lvert\varphi^{-1}(x)\rvert=\lvert \psi^{-1}(x)\rvert$. In particular, $\pi$ is a bijection between $W_\mathcal{T}$ and $W_{\mathcal{T}'}$ (their images are $W_\mathcal{P}$) and we may set $\tau\rvert_{W_\mathcal{T}}\coloneqq\pi\rvert_{W_\mathcal{T}}$, so that \eqref{formp13bis} yields $\varphi\rvert_{W_\mathcal{T}}=\psi\rvert_{W_\mathcal{T}}\circ\tau\rvert_{W_\mathcal{T}}$.
If $u \in V_\mathcal{T}\setminus W_\mathcal{T}$ set $\varphi(u)\equiv \psi(\pi(u))\eqqcolon x$, so that $x\in V_\mathcal{P}\setminus W_\mathcal{P}$.
Then $\varphi_u\colon\mathcal{T}_u\rightarrow\mathcal{P}_x$ and $\psi_{\pi(u)}\colon\mathcal{T}'_{\pi(u)}\rightarrow\mathcal{P}_x$ have the same type as tree compositions of $\mathcal{T}_u$ and $\mathcal{T}'_{\pi(u)}$, respectively, so that, by the inductive hypothesis, they are strictly equivalent: there exists an isomorphism $\tau_u\colon\mathcal{T}_u\rightarrow\mathcal{T}_{\pi(u)}'$ such that 
\begin{equation}\label{formp13}
\varphi_u=\psi_{\pi(u)}\circ\tau_u.
\end{equation}
This way we get an isomorphism as in \eqref{firstldell} by setting $\tau(u,w)\coloneqq(\pi (u),\tau_u(w))$ so that, from \eqref{firstldell2} and \eqref{formp13} it follows that
$\left(\psi\circ\tau\right)(u,w)=\left(\psi(\pi(u))),\psi_{\pi(u)}(\tau_u(w))\right)
=\left(\varphi(u),\varphi_u(w)\right)=\varphi(u,w)$.
\end{proof}

Now suppose that $\alpha\in\mathsf{C}(\mathcal{P},\mathcal{Q})$ is the quotient of $\varphi\in\mathsf{C}(\mathcal{T},\mathcal{P})$, cf. Theorem \ref{Prop1p5}. If $a\in\mathcal{Q}$ is internal, $x\in\alpha^{-1}(a)$ and $u\in\varphi^{-1}(x)$, then 
\[
\text{\rm Ch}(u)=\bigsqcup_{b\in \text{\rm Ch}(a)}\left[(\alpha\circ \varphi)^{-1}(b)\cap\text{\rm Ch}(u)\right]
=\bigsqcup_{b\in \text{\rm Ch}(a)}\;\bigsqcup_{y\in\left[\text{\rm Ch}(x)\cap \alpha^{-1}(b)\right]}\left[\varphi^{-1}(y)\cap\text{\rm Ch}(u)\right].
\]
We introduce the following notation: we denote by $\lambda^b$ the {\em integer partition} which is the type of the set partition
$(\alpha\circ \varphi)^{-1}(b)\cap\text{\rm Ch}(u)=\left\{\varphi^{-1}(y)\cap\text{\rm Ch}(u)\colon y\in\text{\rm Ch}(x)\cap \alpha^{-1}(b)\right\}$;
cf. Example \ref{exampleh1}. By Theorem \ref{Prop1p5}, $\lambda^b$ does not depend on the choice of $u$. In the following picture the ellipse containing $\lambda^b$ represents the partition of type $\lambda^b$ of the vertices in ($\alpha\circ \varphi)^{-1}(b)\cap\text{\rm Ch}(u)$.\\

\begin{picture}(400,70)
\put(100,60){\circle*{2}}
\put(98,65){${\scriptstyle u}$}
\thicklines
\put(100,60){\line(-1,-1){30}}
\put(70,30){\circle*{2}}
\put(75,30){\circle*{1}}
\put(80,30){\circle*{1}}
\put(85,30){\circle*{1}}
\put(100,60){\line(0,-1){30}}
\put(100,30){\circle*{2}}
\put(100,23){\ellipse{28}{14}}
\put(96,21){${\scriptstyle \lambda^{b}}$}
\put(85,7){${\scriptstyle (\alpha\circ\varphi)^{-1}(b)}$}
\put(100,60){\line(1,-1){30}}
\put(130,30){\circle*{2}}
\put(125,30){\circle*{1}}
\put(120,30){\circle*{1}}
\put(115,30){\circle*{1}}

\put(230,60){\circle*{2}}
\put(228,65){${\scriptstyle x}$}
\thicklines
\put(230,60){\line(-3,-2){45}}
\put(185,30){\circle*{2}}
\put(230,60){\line(-1,-2){15}}
\put(215,30){\circle*{2}}
\put(210,22){${\scriptstyle y_1}$}
\put(225,30){\circle*{1}}
\put(230,30){\circle*{1}}
\put(235,30){\circle*{1}}
\put(230,60){\line(1,-2){15}}
\put(230,23){\ellipse{50}{25}}
\put(220,0){${\scriptstyle \alpha^{-1}(b)}$}
\put(245,30){\circle*{2}}
\put(240,22){${\scriptstyle y_n}$}
\put(230,60){\line(3,-2){45}}
\put(275,30){\circle*{2}}

\put(360,60){\circle*{2}}
\put(358,65){${\scriptstyle a}$}
\thicklines
\put(360,60){\line(-1,-1){30}}
\put(330,30){\circle*{2}}
\put(340,30){\circle*{1}}
\put(345,30){\circle*{1}}
\put(350,30){\circle*{1}}
\put(360,60){\line(0,-1){30}}
\put(360,30){\circle*{2}}
\put(370,30){\circle*{1}}
\put(375,30){\circle*{1}}
\put(380,30){\circle*{1}}
\put(360,60){\line(1,-1){30}}
\put(390,30){\circle*{2}}
\put(358,22){${\scriptstyle b}$}

\end{picture}

\noindent
In conclusion, for all $b\in\mathcal{Q}$ and $y\in\alpha^{-1}(b)$,
\begin{equation}\label{lamalphell}
\lambda^b_y\coloneqq\left\lvert \varphi^{-1}\right\rvert(y),\qquad\ell(\lambda^b)=\left\lvert\alpha^{-1}\right\rvert(b),\qquad \lvert\lambda^b\rvert=\left\lvert(\alpha\circ\varphi)^{-1}\right\rvert(b),
\end{equation}
(cf. \eqref{partnotat} and \eqref{typepart}) and the parts of $\lambda^b$ are the $\lambda^b_y$'s
listed in the weakly decreasing order.

Let $\mathcal{Q}$ be a rooted tree. A {\em partitional labeling} of $\mathcal{Q}$ is a function $\Lambda\colon a\mapsto \lambda^a$ that associates to each $a\in\mathcal{Q}$ {\em which is not the root} an integer partition $\lambda^a$ (we use $\Lambda$ to denote the function, and $\lambda^a$ to denote the value of $\Lambda$ in $a$). Another partitional labeling  $\Xi\colon b\mapsto \xi^b$ of tree $\mathcal{Q}'$ is {\em isomorphic} to $\Lambda\colon a\mapsto \lambda^a$ if there exists a rooted tree isomorphism $\gamma\colon\mathcal{Q}\rightarrow\mathcal{Q}'$ such that $\lambda^a=\xi^{\gamma(a)}$ for every $a\in\mathcal{Q}$.
  If $a\mapsto \lambda^a$ is constructed by means of \eqref{lamalphell}, we say that it is {\em determined by} (or {\em associated to}) the tree composition $\varphi\in\mathsf{C}(\mathcal{T},\mathcal{P})$.

\begin{theorem}\label{Theorem38}
Two tree compositions $\varphi\in\mathsf{C}(\mathcal{T},\mathcal{P})$ (with quotient $\alpha\in\mathsf{C}(\mathcal{P}, \mathcal{Q})$ and partitional labeling $a\mapsto\lambda^a$) and $\psi\in\mathsf{C}(\mathcal{T}',\mathcal{P}')$ (with quotient $\beta\in\mathsf{C}(\mathcal{P}',\mathcal{Q}')$ and partitional labeling $b\mapsto \xi^b$) are equivalent if and only if there exists an isomorphism $\gamma\colon\mathcal{Q}\rightarrow\mathcal{Q}'$ of the partitional labelings. Moreover, if this the case then the equivalence and the isomorphism form a commutative diagram as in \eqref{Rtauomeg}.
\end{theorem}

\begin{proof}
If $\varphi$ and $\psi$ are equivalent and $\gamma$ is the isomorphism in \eqref{Rtauomeg} then clearly $\lambda^a=\xi^{\gamma(a)}$.

Now we prove the converse, by induction on $H(\mathcal{T})$: for $H(\mathcal{T})=1$ it is obvious: by Example \ref{exampleh1} and Theorem \ref{Prop1p5}\eqref{Prop1p51}, $V_\mathcal{Q}$ and $V_{\mathcal{Q}'}$ are singletons and $\varphi$ and $\psi$ define equivalent set compositions, because they are associated to the same integer partition. Suppose that $H(\mathcal{T})> 1$.
First of all, we show how to construct $\omega$: more precisely, starting from an isomorphism $\gamma$ such that $\lambda^a=\xi^{\gamma(a)}$ we prove that there exists $\omega$ such that the second square in \eqref{Rtauomeg} is commutative and also $\lambda^a_x=\xi^{\gamma(a)}_{\omega(x)}$, for all $x\in\alpha^{-1}(a)$, $a\in \mathcal{Q}$.  By \eqref{lamalphell}, for all $a\in V_\mathcal{Q}$ we have 
$\left\lvert\alpha^{-1}(a)\right\rvert=\ell\left(\lambda^a\right)=\ell\left(\xi^{\gamma(a)}\right)=\left\lvert\beta^{-1}(\gamma(a))\right\rvert$. 
Therefore we can choose a bijection $\sigma\colon V_\mathcal{P}\rightarrow V_{\mathcal{P}'}$ in such a way that
\begin{equation}\label{gammaomeg}
\sigma\left(\alpha^{-1}(a)\right)=\beta^{-1}(\gamma(a)),
\end{equation}
for all $a\in V_\mathcal{Q}$, and also $\lambda^a_x=\xi^{\gamma(a)}_{\sigma(x)}$, for all $x\in\alpha^{-1}(a)$. Again, $W_\mathcal{Q}$ and $ W_{\mathcal{Q}'}$ are both empty or both singletons, and in the latter case may set $\omega\rvert_{W_\mathcal{P}}=\sigma\rvert_{W_\mathcal{P}}$.

After that, for each $x\in V_\mathcal{P}\setminus W_\mathcal{P}$ consider the tree compositions $\alpha_x\colon\mathcal{P}_x\rightarrow \mathcal{Q}_{\alpha(x)}$ and $\beta_{\sigma(x)}\colon\mathcal{P}'_{\sigma(x)}\rightarrow \mathcal{Q}_{\gamma(\alpha(x))}'\equiv\mathcal{Q}_{\beta(\sigma(x))}'$ (cf. \eqref{gammaomeg}). Since $\gamma_{\alpha(x)}\colon\mathcal{Q}_{\alpha(x)}\rightarrow \mathcal{Q}_{\gamma(\alpha(x))}'$ satisfies again the condition $\lambda^a=\xi^{\gamma(a)}$ for all $a\in\mathcal{Q}_{\alpha(x)}$, by the inductive hypothesis there exists an isomorphism $\omega_x\colon\mathcal{P}_x\rightarrow\mathcal{P}'_{\sigma(x)}$ such that 
\begin{equation}\label{betasig}
\beta_{\sigma(x)}\circ\omega_x=\gamma_{\alpha(x)}\circ\alpha_x\qquad\text{ and }\qquad\lambda^a_z=\xi^{\gamma(a)}_{\omega_x(z)}, 
\end{equation}
for all $z\in\alpha^{-1}(a)\cap \mathcal{P}_x$, $a\in\mathcal{Q}_{\alpha(x)}$.
Then we can define an isomorphism written as in \eqref{firstldell} by setting $\omega(x,z)=(\sigma (x),\omega_x(z))$ for all $x\in V_\mathcal{P}\setminus W_\mathcal{P}$, $z\in\widetilde{\mathcal{P}}_x$, so that
\begin{align*}
\beta\circ\omega(x,z)=&\bigl((\beta\circ\sigma)(x),(\beta_{\sigma(x)}\circ\omega_x)(z))\bigr)&(\text{by }\eqref{firstldell2})\\
=&\bigl((\gamma\circ\alpha)(x),(\gamma_{\alpha(x)}\circ\alpha_x)(z)\bigr)=\gamma\circ\alpha(x,z).&(\text{by }\eqref{gammaomeg}, \eqref{betasig} \text{ and }\eqref{firstldell2})\\
\end{align*}
Finally, consider the tree compositions $\omega\circ\varphi\colon\mathcal{T}\rightarrow\mathcal{P}'$ and $\psi\colon\mathcal{T}'\rightarrow\mathcal{P}'$ (cf. \eqref{Rtauomeg}); they have the same type because $x\in\alpha^{-1}(a)$, $a\in \mathcal{Q}$, implies that 
\begin{equation}\label{betasig2}
\omega(x)\in\beta^{-1}(\gamma(a)).
\end{equation}
Therefore, by \eqref{betasig} and \eqref{betasig2},
$\left\lvert\left(\omega\circ\varphi\right)^{-1}\right\rvert(\omega(x))=\left\lvert\varphi^{-1}\right\rvert(x)=\lambda_x^a
=\xi^{\gamma(a)}_{\omega(x)}=\left\lvert\psi^{-1}\right\rvert(\omega(x))$. Then the existence of $\tau$ is guaranteed by Theorem \ref{lemma2p15}.
\end{proof}

\begin{definition}\label{Deftreepart}{\rm 
A {\em tree of partitions} is a partitional labeling of a rooted tree $\mathcal{Q}$ such that: (i) among the children of an internal vertex of $\mathcal{Q}$ at most one is a leaf; (ii) if $a,b\in\mathcal{Q}$ are distinct siblings which are internal then the partitional labelings $\Lambda_a$ and $\Lambda_b$ are {\em not isomorphic}. 
}
\end{definition}

Note that, in the previous definition, in the partitional labelings $\Lambda_a$ and $\Lambda_b$ the vertices $a$ and $b$ are the roots of the respective trees and therefore they must considered {\em unlabeled}. Note also that, by (ii) in Definition \ref{Deftreepart}, a tree of partitions has no non-trivial automorphisms. We will usually indicate a tree of partitions as a couple $(\Lambda,\mathcal{Q})$; sometimes only by $\Lambda$.

\begin{proposition}\label{ProptreepartT}
A partitional labeling $\Lambda\colon a\mapsto\lambda^a$ of a rooted tree $\mathcal{Q}$ is determined by a tree composition if and only if it is a tree of partitions.
\end{proposition}
\begin{proof}
The only if part follows from \eqref{Prop1p51} and \eqref{Prop1p52} in Theorem \ref{Prop1p5} and from Theorem \ref{Theorem38}. The if part may be obtained by reversing the proof of Theorem \ref{Prop1p5}: by induction on $H(\mathcal{Q})$, we may construct $\mathcal{T}, \mathcal{P}, \varphi,\alpha$ from $\mathcal{Q}$ and $\Lambda$. Now \eqref{2quad5} may be used to {\em define} $V_\mathcal{P}\setminus W_\mathcal{P}$, $\pi\equiv\alpha_{V_\mathcal{P}\setminus W_\mathcal{P}}$, $V_\mathcal{T}\setminus W_\mathcal{T}$ and $\varphi_{V_\mathcal{T}\setminus W_\mathcal{T}}$, taking into account that the cardinalities are given by the partitions associated to the vertices in $V_\mathcal{Q}\setminus W_\mathcal{Q}$, by means of \eqref{lamalphell}. The inductive hypothesis may be used to construct the trees $\mathcal{T}_u$, $u\in V_\mathcal{T}\setminus W_\mathcal{T}$, $\mathcal{P}_x$, $x\in V_\mathcal{P}\setminus W_\mathcal{P}$ and the relative tree compositions. Finally, observe that Theorem \ref{Theorem38} and Definition \ref{Deftreepart} ensure that the condition in Theorem \ref{Prop1p5}\eqref{Prop1p52} is verified. 
\end{proof}

\begin{corollary}\label{CortreepartT}
\begin{enumerate}[(a)]
\item\label{CortreepartTa}
If $(\Lambda,\mathcal{Q})$ is a tree of partitions then there exists a rooted tree $\mathcal{T}$ with a tree composition $\delta\in\mathsf{C}(\mathcal{T},\mathcal{Q})$ such that, for every internal vertex $a\in\mathcal{Q}$ and for every $u\in\delta^{-1}(a)$, we have
\begin{equation}\label{deltaTQ}
\sum_{b\in\text{\rm Ch}(a)}\left\lvert\lambda^b\right\rvert=\lvert\text{\rm Ch}(u)\rvert.
\end{equation}
\item\label{CortreepartTb}
If $\mathcal{T}'$ is another rooted tree with a tree composition $\delta'\in\mathsf{C}(\mathcal{T}',\mathcal{Q})$ that satisfies \eqref{deltaTQ} then there exists an isomorphism $\tau\colon\mathcal{T}\rightarrow\mathcal{T}'$ such that the diagram
\begin{equation}\label{Rtauomeg2}
\xymatrix@R=5pt{
\mathcal{T}\ar[dr]^{\delta}\ar[dd]_{\tau} & \\
&\mathcal{Q}\\
\mathcal{T}'\ar[ur]_{\delta'}&}
\end{equation}
is commutative.
\item\label{CortreepartTc}
Given $(\Lambda,\mathcal{Q})$ and $\mathcal{T}$ as in \eqref{CortreepartTa}, a rooted tree $\mathcal{P}$ has $(\Lambda,\mathcal{Q})$ as the tree of partitions associated to some $\varphi\in\mathsf{C}(\mathcal{T},\mathcal{P})$ if and only if there exists a tree composition $\alpha\in\mathsf{C}(\mathcal{P},\mathcal{Q})$ such that, for every internal vertex $a\in\mathcal{Q}$ and for every $x\in\alpha^{-1}(a)$, we have
\begin{equation}\label{alphaPQ}
\sum_{b\in\text{\rm Ch}(a)}\ell(\lambda^b)=\lvert\text{\rm Ch}(x)\rvert.
\end{equation}
\end{enumerate}
\end{corollary}
\begin{proof}
The existence of $\mathcal{T}$ and of $\delta$ in \eqref{CortreepartTa} follows from Proposition \ref{ProptreepartT}, by setting $\delta\coloneqq\alpha\circ\varphi$ in the notation of Theorem \ref{Prop1p5}; then \eqref{deltaTQ} follows from \eqref{lamalphell}. After that, \eqref{CortreepartTb} follows from Theorem \ref{Theorem38}, recalling that now $\gamma=\id_\mathcal{Q}$ because a tree of partitions has no non-trivial automorphisms. Now assume \eqref{alphaPQ} and let $\mathcal{T}$, $\delta$ be as in \eqref{CortreepartTa}. Then $\varphi$ is any map such that $\varphi\left(\delta^{-1}(b)\right)=\alpha^{-1}(b)$ and $\varphi\rvert_{\delta^{-1}(b)\cap\text{\rm Ch}(u)}$ determines a set composition of type $\lambda^b$, for every $u\in\mathcal{T}$ such that $\delta(u)$ is the parent of $b$. Such a $\varphi$ may be easily constructed inductively, taking into account that if $\delta(u)=\delta(v)$ then $\mathcal{T}_u$ and $\mathcal{T}_v$ are isomorphic.
 \end{proof}

\begin{example}{\rm
The tree of partitions of $\varphi^\mathcal{S}$ in Example \ref{Examplesubtrees} (see also Example \ref{Examplequotient}) is the following (we assume $r_h-s_h\geq s_h$, otherwise the last partition at the lower level is $(s_h,r_h-s_h)$):  

\begin{picture}(400,150)


\put(200,140){\circle*{2}}
\put(198,145){${\scriptstyle\emptyset}$}
\put(188,145){${\scriptstyle\mathcal{Q}}$}
\thicklines
\put(200,140){\line(1,-2){40}}

\put(200,140){\line(-1,-2){40}}

\put(180,100){\circle*{2}}
\put(150,100){${\scriptstyle (r_1-s_1)}$}
\put(220,100){\circle*{2}}
\put(220,100){\line(-1,-2){20}}
\put(225,100){${\scriptstyle (s_1)}$}

\put(160,60){\circle*{2}}
\put(140,60){${\scriptstyle (r_2)}$}
\put(200,60){\circle*{2}}
\put(170,60){${\scriptstyle (r_2-s_2)}$}
\put(240,60){\circle*{2}}
\put(245,60){${\scriptstyle (s_2)}$}
\dashline{2}(240,60)(225,30)
\dashline{2}(160,60)(145,30)
\dashline{2}(240,60)(255,30)
\put(145,30){\circle*{2}}
\put(145,30){\line(-1,-2){15}}
\put(115,30){${\scriptstyle (r_{h-1})}$}
\put(225,30){\circle*{2}}
\put(225,30){\line(-1,-2){15}}
\put(175,30){${\scriptstyle (r_{h-1}-s_{h-1})}$}
\put(255,30){\circle*{2}}

\put(255,30){\line(1,-2){15}}
\put(260,30){${\scriptstyle (s_{h-1})}$}

\put(130,0){\circle*{2}}
\put(110,0){${\scriptstyle (r_h)}$}
\dashline{2}(180,20)(170,0)
\put(170,0){\circle*{2}}
\put(150,0){${\scriptstyle (r_h)}$}
\put(210,0){\circle*{2}}
\put(190,0){${\scriptstyle (r_h)}$}
\put(270,0){\circle*{2}}
\put(275,0){${\scriptstyle (r_h-s_h,s_h)}$}
\end{picture}

\noindent
The tree of partitions of a tree composition as in Example \ref{Exampleflags} has the following structure ($\mathcal{Q}$ is described in Example \ref{Examplequotient}). The tree $\mathcal{Q}$ coincides with $\mathcal{P}$ up to level $h-1$; if $x\in\mathcal{Q}$ is at level $k\leq h-1$ and $\ell(x)=j$ then its associated partition is $\lambda^x=(s_k^j-s_k^{j+1})$ (recall that $s_k^{l+1}=0$). If $a\in\mathcal{Q}$ is at level $h$, $x$ is its parent at level $h-1$ and $\ell(x)=j\geq1$ then $\lambda^a=(s_h^0-s_h^1,s_h^1-s_h^2,\dotsc,s_h^{j-1}-s_h^j, s_h^j)$ while if $\ell(x)=0$ then $\lambda^a=(s_h^0)\equiv(r_h)$.
}
\end{example}

\begin{example}\label{treeparttvid}{\rm
It is easy to check that, if $\mathcal{T}$ is spherically homogeneous of branching type $(r_1,r_2,\dotsc,r_h)$, then the trees of partitions of the trivial composition and of the identity composition (cf. Example \ref{idtreecomp}) are, respectively,

\begin{picture}(400,150)
\thicklines
\put(150,140){\circle*{2}}
\put(153,140){${\scriptstyle (r_1)}$}
\put(150,140){\line(0,-1){30}}
\thinlines
\put(150,110){\circle*{2}}
\put(153,110){${\scriptstyle (r_2)}$}
\dashline{2}(150,110)(150,80)
\thicklines
\put(150,80){\circle*{2}}
\put(153,80){${\scriptstyle (r_{h-1})}$}
\put(150,80){\line(0,-1){30}}
\put(150,50){\circle*{2}}
\put(153,50){${\scriptstyle (r_h)}$}
\put(140,25){$\Lambda_{\text{\rm tv}}$}
\thicklines
\put(300,140){\circle*{2}}
\put(303,140){${\scriptstyle \left(1^{r_1}\right)}$}
\put(300,140){\line(0,-1){30}}
\thinlines
\put(300,110){\circle*{2}}
\put(303,110){${\scriptstyle \left(1^{r_2}\right)}$}
\dashline{2}(300,110)(300,80)
\thicklines
\put(300,80){\circle*{2}}
\put(303,80){${\scriptstyle \left(1^{r_{h-1}}\right)}$}
\put(300,80){\line(0,-1){30}}
\put(300,50){\circle*{2}}
\put(303,50){${\scriptstyle \left(1^{r_h}\right)}$}
\put(290,25){$\Lambda_{\text{\rm id}}$}
\end{picture}
}
\end{example}

\begin{definition}\label{defIrrG}{\rm
If $\mathcal{T}$ is a rooted tree then we denote by $\mathsf{Par}(\mathcal{T})$ the set of all trees of partitions $(\Lambda,\mathcal{Q})$ for which there exists $\delta\in\mathsf{C}(\mathcal{T},\mathcal{Q})$
such that \eqref{deltaTQ} is verified; this $\delta$ will be called a {\em realization} of $(\Lambda,\mathcal{Q})$ as a tree of partitions of $\mathcal{T}$. If $\mathcal{P}$ is a compositional tree for $\mathcal{T}$ we denote by $\mathsf{Par}(\mathcal{T},\mathcal{P})$ the set of all $(\Lambda,\mathcal{Q})\in\mathsf{Par}(\mathcal{T})$ for which there exists $\alpha\in\mathsf{C}(\mathcal{P},\mathcal{Q})$ such that also \eqref{alphaPQ} is verified.
Finally, we set $\mathsf{Type}(\mathcal{T},\mathcal{P})\coloneqq\left\{\lvert\varphi^{-1}\rvert\colon\varphi\in\mathsf{C}(\mathcal{T},\mathcal{P})\right\}$, the set of all possible types of the $\mathcal{P}$-compositions of $\mathcal{T}$. }
\end{definition}

In other words, $\mathsf{Par}(\mathcal{T})$ is the set of all trees of partitions that can be obtained from the tree compositions of $\mathcal{T}$, $\mathsf{Par}(\mathcal{T},\mathcal{P})$ is the set of all trees of partitions that can be obtained from the $\mathcal{P}$-compositions of $\mathcal{T}$, while $\mathsf{Type}(\mathcal{T},\mathcal{P})$ is the set of all the possible types of the $\mathcal{P}$-compositions of $\mathcal{T}$. In the $H(\mathcal{T})=1$ case (cf. Example \ref{exampleh1}) $\mathsf{Par}(\mathcal{T})$ corresponds to the set of all integer partitions of $\lvert V\rvert$, while both $\mathsf{Par}(\mathcal{T},\mathcal{P})$ and $\mathsf{Type}(\mathcal{T},\mathcal{P})$ correspond to the set of all integer partitions of $\lvert V\rvert$ of length $\lvert X\rvert$. 

We denote by $\mathsf{C}(\mathcal{T},\mathcal{P})/\left(\Aut(\mathcal{T})\times\Aut(\mathcal{P})\right)$ the set of $\Aut(\mathcal{T})\times\Aut(\mathcal{P})$-orbits on $\mathsf{C}(\mathcal{T},\mathcal{P})$, which correspond to the equivalence classes of $\mathcal{P}$-compositions of $\mathcal{T}$ (cf. Example \ref{Remobviousfacts2}); similarly, we denote by $\mathsf{C}(\mathcal{T},\mathcal{P})/\Aut(\mathcal{T})$ the set of $\Aut(\mathcal{T})$-orbits on $\mathsf{C}(\mathcal{T},\mathcal{P})$, which correspond to the strict equivalence classes of $\mathcal{P}$-compositions of $\mathcal{T}$. For $\varphi\in\mathsf{C}(\mathcal{T},\mathcal{P})$ we denote by $[\varphi]$ (respectively by $\{\varphi\}$) the equivalence class (respectively  the strict equivalence class) containing $\varphi$.

\begin{theorem}\label{Theoorbits}
\begin{enumerate}[(a)]
\item\label{Theoorbitsa}
The map
\[
\begin{array}{ccc}
\mathsf{C}(\mathcal{T},\mathcal{P})/\left(\Aut(\mathcal{T})\times\Aut(\mathcal{P})\right)&\longrightarrow&\mathsf{Par}(\mathcal{T},\mathcal{P})\\ 
\;[\varphi]&\longmapsto&(\Lambda,\mathcal{Q}),
\end{array}
\]
where $(\Lambda,\mathcal{Q})$ is the tree of partitions associated to $\varphi$, is a bijection. In particular, $\mathsf{Par}(\mathcal{T},\mathcal{P})$ parametrizes the orbits of $\Aut(\mathcal{T})\times\Aut(\mathcal{P})$ on $\mathsf{C}(\mathcal{T},\mathcal{P})$ and $\mathsf{Par}(\mathcal{T})$ parametrizes the set of all equivalence classes of tree compositions of $\mathcal{T}$.
\item\label{Theoorbitsb}
The map
\[
\begin{array}{ccc}
\mathsf{C}(\mathcal{T},\mathcal{P})/\Aut(\mathcal{T})&\longrightarrow&\mathsf{Type}(\mathcal{T},\mathcal{P})\\ 
\;\{\varphi\}&\longmapsto&\lvert\varphi^{-1}\rvert
\end{array}
\]
is a bijection. In particular, $\mathsf{Type}(\mathcal{T},\mathcal{P})$ parametrizes the orbits of $\Aut(\mathcal{T})$ on $\mathsf{C}(\mathcal{T},\mathcal{P})$.
\end{enumerate}
\end{theorem}
\begin{proof}
By Theorem \ref{Theorem38}, Proposition \ref{ProptreepartT} and Corollary \ref{CortreepartT}, $\mathsf{Par}(\mathcal{T},\mathcal{P})$ parametrizes the equivalence classes of $\mathcal{P}$-compositions of $\mathcal{T}$. Since $(\Lambda,\mathcal{Q})$ determines $\varphi$ up to equivalences, by varying $\mathcal{P}$ we get all the elements of $\mathsf{Par}(\mathcal{T})$, that is $\mathsf{Par}(\mathcal{T})=\bigsqcup_\mathcal{P}\mathsf{Par}(\mathcal{T},\mathcal{P})$, where the (disjoint) union is over all compositional trees $\mathcal{P}$ of $\mathcal{T}$. Finally, \eqref{Theoorbitsb} follows from Theorem \ref{lemma2p15}.
\end{proof}

By Corollary \ref{CortreepartT}\eqref{CortreepartTb}, the set of all realizations of $(\Lambda,\mathcal{Q})$ form an orbit of $\Aut(\mathcal{T})$ of the form $\{\delta\circ g\colon g\in\Aut(\mathcal{T})\}$. In the notation of Theorem \ref{Prop1p5},
 $\delta\coloneqq \alpha\circ\varphi$ is a realization of the tree of partitions constructed by means of \eqref{lamalphell} and if $\varphi'\in[\varphi]$ then the realization of $(\Lambda,\mathcal{Q})$ associated to $\varphi'$ is in the $\Aut(\mathcal{T})$-orbit of $\delta$.

\begin{algorithm}\label{alg1}{\rm
Now we describe an iterative algorithm for the construction of every element of $\mathsf{Par}(\mathcal{T})$ along with one of its realizations. We will use the notation in Example \ref{idtreecomp}.
\begin{enumerate}
\item
If $H(\mathcal{T})=1$ or if $W_\mathcal{T}$ is not empty use Example \ref{exampleh1}.
\item
If $H(\mathcal{T})>1$ then for each $c\in V_\mathcal{C}\setminus W_\mathcal{C}$ take a set $A_c$ such that 
\begin{equation}\label{defAc}
\lvert A_c\rvert\leq \min\left\{\left\lvert \tv_\mathcal{T}^{-1}(c)\right\rvert,\left\lvert\mathsf{Par}(\mathcal{T}_{u_c})\right\rvert \right\}
\end{equation}
and set $V_\mathcal{Q}\setminus W_\mathcal{Q}\coloneqq \bigsqcup_{c\in V_\mathcal{C}\setminus W_\mathcal{C}}A_c$.
\item\label{alg3}
Then for all $a\in A_c$, $c\in V_\mathcal{C}\setminus W_\mathcal{C}$, choose $(\Lambda_a,\mathcal{Q}_a)\in\mathsf{Par}(\mathcal{T}_{u_a})$ in such a way that if $a,a'\in A_c$ and $a\neq a'$ then $(\Lambda_a,\mathcal{Q}_a)$ and $(\Lambda_{a'},\mathcal{Q}_{a'})$ are not isomorphic; this requires the preventive construction of the elements of $\mathsf{Par}(\mathcal{T}_{u_a})$ and the condition $\lvert A_c\rvert\leq\lvert\mathsf{Par}(\mathcal{T}_{u_c})\rvert$ in \eqref{defAc}. Fix also a $\delta_a\in\mathsf{C}(\mathcal{T}_{u_c},\mathcal{Q}_a)$ which is a realization of $\Lambda_a$.

\item\label{alg4}
Choose integer partitions $\lambda^a$ in such a way that $\sum_{a\in A_c}\lvert\lambda^a\rvert=\left\lvert \tv_\mathcal{T}^{-1}(c)\right\rvert$, for all $c\in V_\mathcal{C}\setminus W_\mathcal{C}$.
This requires the condition $\lvert A_c\rvert\leq \lvert \tv_\mathcal{T}^{-1}(c)\rvert$ in \eqref{defAc}. Then we may define $\mathcal{Q}$ by means of \eqref{firstld} and also $\Lambda$ is completely defined and we have the tree of partitions.

\item
Finally, define $\delta$ at the first level in such a way that $\varphi\bigl(\tv_\mathcal{T}^{-1}(c)\bigr)=A_c$ and \eqref{deltaTQ} is satisfied.

\end{enumerate}

\noindent
By choosing the $(\Lambda_a,\mathcal{Q}_a)$'s in \ref{alg3}. and the $\lambda^a$'s in \ref{alg4}. in all possible way we get all the elements in $\mathsf{P}(\mathcal{T})$.
}
\end{algorithm}

\begin{example}\label{Exbinary}{\rm
Suppose that $\mathcal{T}_h$ is the spherically homogeneous binary tree of height $h$, that is set $r_1=r_2=\dotsb r_h=2$ in \eqref{Autiterwreath}.
Now we apply Algorithm \ref{alg1} to get a recursive formula for $\left\lvert\mathsf{Par}\left(\mathcal{T}_h\right)\right\rvert$. First of all, $\left\lvert\mathsf{Par}\left(\mathcal{T}_1\right)\right\rvert=2$ and the tree $\mathcal{C}$ of the trivial composition is a path (cf. Example \ref{treeparttvid}). When we apply the algorithm we have three possibilities:
\begin{enumerate}[(a)]
\item\label{Exbinary1} take $A_c=\{a\}$, $\lambda^a=(2)$ and choose a tree of partitions in $\mathsf{Par}\left(\mathcal{T}_{h-1}\right)$;
\item\label{Exbinary2} take $A_c=\{a\}$, $\lambda^a=(1,1)$ and choose a tree of partitions in $\mathsf{Par}\left(\mathcal{T}_{h-1}\right)$;
\item\label{Exbinary3} take $A_c=\{a,a'\}$, so that necessarily $\lambda^a=(1)=\lambda^{a'}$ and choose a couple of distinct trees of partitions in $\mathsf{Par}\left(\mathcal{T}_{h-1}\right)$.
\end{enumerate}
Then we have
\[
\left\lvert\mathsf{Par}\left(\mathcal{T}_h\right)\right\rvert=2\left\lvert\mathsf{Par}\left(\mathcal{T}_{h-1}\right)\right\rvert+\binom{\left\lvert\mathsf{Par}\left(\mathcal{T}_{h-1}\right)\right\rvert}{2},
\]
where the first term in the right hand side comes from \eqref{Exbinary1} and \eqref{Exbinary2} while the second term from \eqref{Exbinary3}. The same formula is in \cite[p. 539]{OOR}, where it is obtained by iterating Clifford theory.
}
\end{example}

\section{The conjugacy classes of $\Aut(\mathcal{T})$}\label{Secconjclasses}

This section is partially inspired to the classical theory of conjugacy classes of wreath products; see \cite[Section 2.3]{book3} and \cite[Section 4.3]{JK}. But now we use the new tool represented by the theory of tree compositions; in particular, the $\varphi^g$ in Example \ref{Examplephig}.

\begin{remark}{\rm The paper \cite{GNS} contains very similar results for the automorphisms group of an infinite, rooted tree. The orbit type in that paper is obtained by assigning to each vertex $x\in \mathcal{P}^g$ the cardinality of $\left(\varphi^g\right)^{-1}(x)$. This notion is  equivalent to the type defined in \eqref{typepart}: if $x_0=\emptyset,x_1,\dotsc,x_k=x$ is the path from the root to $x$ then $\left\lvert\left(\varphi^g\right)(x)\right\rvert=\prod_{i=1}^k\left\lvert\left(\varphi^g\right)^{-1}\right\rvert(x_i)$; see also \cite[Condition (i) p. 541]{GNS}. We have taken a further step by introducing the notion of tree of partitions, which is a more concise notation, also more natural from the point of view of the representation theory. For instance, in our notation the identity element is represented by the tree $\Lambda_{\text{\rm id}}$ in Example \ref{treeparttvid} while in \cite{GNS} it is represented by the whole tree $\mathcal{T}$ with each vertex labeled by $1$.
}
\end{remark}

Let $g\in\Aut(\mathcal{T})$ and denote by $\pi$ its restriction to $V_\mathcal{T}$. Suppose $\{u,\pi(u),\dotsc,\pi^{\ell-1}(u)\}$ is an orbit of $\pi$ on $V_\mathcal{T}\setminus W_\mathcal{T}$, with $\ell$ the smallest positive integer such that $\pi^\ell(u)=u$. Set 
\begin{equation}\label{defXi}
\left[g^\ell\right]_u\coloneqq g_{\pi^{\ell-1}(u)}\circ g_{\pi^{\ell-2}(u)}\circ\dotsb\circ g_{\pi(u)}\circ g_u.
\end{equation}
By \eqref{firstldell2}, $\left[g^\ell\right]_u\in\Aut(\mathcal{T}_u)$; note that \eqref{defXi} is just a particular notation for $g^\ell$ restricted to $\mathcal{T}_u$, but when we use it we assume that $u$ is at the first level and $\ell$ is the length of its $g$-orbit.
Now suppose that also $t\in\Aut(\mathcal{T})$ and denote by $\sigma$ the restriction of $t$ to $V_\mathcal{T}$. Then $\sigma\circ\pi\circ\sigma^{-1}$ is the restriction of $tgt^{-1}$ to $V_{\mathcal{T}}$ while from \eqref{firstldell2} and \eqref{firstldell3} we deduce that for $u\in V_\mathcal{T}\setminus W_\mathcal{T}$ we have:
\begin{equation}\label{congtgt1}
(tgt^{-1})_u=t_{\pi(\sigma^{-1}(u))}\circ g_{\sigma^{-1}(u)}\circ\left(t_{\sigma^{-1}(u)}\right)^{-1}.
\end{equation}

\begin{lemma}\label{lemmacongtgt2}
With the notation above, for all $v\in V_\mathcal{T}\setminus W_{\mathcal{T}}$ we have:
\[
\left[\left(tgt^{-1}\right)^\ell\right]_{\sigma(v)}=t_v\circ\left[g^\ell\right]_v\circ\left(t_v\right)^{-1}.
\]
\end{lemma}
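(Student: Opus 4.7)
The plan is a direct telescoping calculation, using only the definition \eqref{defXi} of $[g^\ell]_u$ together with the conjugation formula \eqref{congtgt1} at the first level.

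First I would record that, since $\sigma\circ\pi\circ\sigma^{-1}$ is the restriction of $tgt^{-1}$ to $V_\mathcal{T}$, the $(\sigma\pi\sigma^{-1})$-orbit of $\sigma(v)$ equals $\{\sigma\pi^k(v)\colon k=0,1,\dotsc,\ell-1\}$ and has length exactly $\ell$, since $\pi^\ell(v)=v$. Thus, by definition \eqref{defXi} applied to $tgt^{-1}$ at the first-level vertex $\sigma(v)$,
\[
\left[(tgt^{-1})^\ell\right]_{\sigma(v)}
 = (tgt^{-1})_{\sigma\pi^{\ell-1}(v)}\circ(tgt^{-1})_{\sigma\pi^{\ell-2}(v)}\circ\dotsb\circ (tgt^{-1})_{\sigma\pi(v)}\circ(tgt^{-1})_{\sigma(v)}.
\]

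Next, I would apply \eqref{congtgt1} to each factor. Taking $u=\sigma\pi^k(v)$ (so that $\sigma^{-1}(u)=\pi^k(v)$ and $\pi(\sigma^{-1}(u))=\pi^{k+1}(v)$), that formula yields
\[
(tgt^{-1})_{\sigma\pi^k(v)}=t_{\pi^{k+1}(v)}\circ g_{\pi^k(v)}\circ\bigl(t_{\pi^k(v)}\bigr)^{-1},\qquad k=0,1,\dotsc,\ell-1.
\]

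Substituting these $\ell$ expressions into the composition and observing that adjacent factors contain $(t_{\pi^k(v)})^{-1}\circ t_{\pi^k(v)}=\id$ for $k=1,2,\dotsc,\ell-1$, everything cancels except the leftmost $t_{\pi^\ell(v)}$, the chain of $g_{\pi^k(v)}$'s, and the rightmost $(t_v)^{-1}$. Using $\pi^\ell(v)=v$ to replace $t_{\pi^\ell(v)}$ by $t_v$, and recognising the chain
\[
g_{\pi^{\ell-1}(v)}\circ g_{\pi^{\ell-2}(v)}\circ\dotsb\circ g_{\pi(v)}\circ g_v
\]
as $[g^\ell]_v$ by \eqref{defXi}, we conclude $[(tgt^{-1})^\ell]_{\sigma(v)}=t_v\circ[g^\ell]_v\circ(t_v)^{-1}$, as desired.

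The only point requiring a little care (but no real obstacle) is the bookkeeping of indices: one must verify that the $(\sigma\pi\sigma^{-1})$-orbit of $\sigma(v)$ has the same length $\ell$ as the $\pi$-orbit of $v$, and that successive application of \eqref{congtgt1} produces precisely the telescoping pattern. Once the indices are set up correctly the identity is forced by \eqref{firstldell2}--\eqref{firstldell3}.
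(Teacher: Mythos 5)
Your proof is correct and is essentially identical to the paper's: the paper performs the same telescoping computation, writing the argument in terms of $u$ and then substituting $v\coloneqq\sigma^{-1}(u)$ at the end, whereas you work directly in terms of $v$ with $u=\sigma(v)$ — a purely cosmetic difference. Your remark that the orbit of $\sigma(v)$ under $\sigma\pi\sigma^{-1}$ has the same length $\ell$, which is needed to invoke \eqref{defXi} for $tgt^{-1}$, is a detail the paper leaves implicit, so including it is a mild improvement.
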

\begin{proof}
From \eqref{defXi} and \eqref{congtgt1} it follows that
\[
\begin{split}
\left[\left(tgt^{-1}\right)^\ell\right]_u=&\left[\left(tgt^{-1}\right)^\ell\right]_{\sigma\pi^{\ell-1}\sigma^{-1}(u)}\circ\dotsb\circ\left[\left(tgt^{-1}\right)^\ell\right]_{\sigma\pi\sigma^{-1}(u)}\circ\left[\left(tgt^{-1}\right)^\ell\right]_u\\
=&\left[t_{\pi^\ell(\sigma^{-1}(u))}\circ g_{\pi^{\ell-1}(\sigma^{-1}(u))}\circ\left(t_{\pi^{\ell-1}(\sigma^{-1}(u))}\right)^{-1}\right]\circ\dotsb\circ\\
&\circ\left[t_{\pi^2(\sigma^{-1}(u))}\circ g_{\pi(\sigma^{-1}(u))}\circ\left(t_{\pi(\sigma^{-1}(u))}\right)^{-1}\right]\circ\left[t_{\pi(\sigma^{-1}(u))}\circ g_{\sigma^{-1}(u)}\circ\left(t_{\sigma^{-1}(u)}\right)^{-1}\right]\\
=&t_{\sigma^{-1}(u)}\circ\left[g^\ell\right]_{\sigma^{-1}(u)}\circ\left(t_{\sigma^{-1}(u)}\right)^{-1}.
\end{split}
\]
Finally, set $v\coloneqq \sigma^{-1}(u)$.
\end{proof}

For $g\in\Aut(\mathcal{T})$ denote by $U_g$ a set of representatives for the orbits of $g$ on $V_\mathcal{T}\setminus W_\mathcal{T}$.
 
\begin{lemma}\label{lemmaconj}
Let $g$ be as above and suppose that also $g'\in\Aut(\mathcal{T})$, with $\pi'$ the restriction of $g'$ to $V_\mathcal{T}$. Then $g$ and $g'$ are conjugate in $\Aut(\mathcal{T})$ if and only if both the following conditions are verified
\begin{enumerate}[\rm(a)]
\item\label{lemmaconj1} $\pi,\pi'$ are conjugate in $\Aut(\mathcal{T})\rvert_{V_\mathcal{T}}$: $\pi'=\sigma\circ\pi\circ\sigma^{-1}$ with $\sigma\in\Aut(\mathcal{T})\rvert_{V_\mathcal{T}}$ (cf. \eqref{squaresigmap17});

\item  for every $v\in U_g$ there exists an isomorphism $t_v\colon\mathcal{T}_v\rightarrow\mathcal{T}_{\sigma(v)}$ ($\sigma$ as in \eqref{lemmaconj1}) such that 
\begin{equation}\label{congtgt3}
\left[\left(g'\right)^\ell\right]_{\sigma(v)}=t_v\circ\left[g^\ell\right]_v\circ\left(t_v\right)^{-1}.
\end{equation}
\end{enumerate}
\end{lemma}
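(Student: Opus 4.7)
The forward implication is essentially packaged by Lemma \ref{lemmacongtgt2}. If $g'=tgt^{-1}$ with $t\in\Aut(\mathcal{T})$, let $\sigma\coloneqq t\rvert_{V_\mathcal{T}}$. Then $\sigma\in\Aut(\mathcal{T})\rvert_{V_\mathcal{T}}$ (cf.\ \eqref{squaresigmap17}) and the restriction of $g'=tgt^{-1}$ to $V_\mathcal{T}$ is $\sigma\circ\pi\circ\sigma^{-1}$, giving \ref{lemmaconj1}. Setting $t_v\coloneqq t_v$ (the restriction of $t$ to $\mathcal{T}_v$) for each $v\in U_g$, Lemma \ref{lemmacongtgt2} applied to $v$ yields exactly \eqref{congtgt3}.

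For the converse, the task is to assemble a global $t\in\Aut(\mathcal{T})$ out of $\sigma$ and the family $\{t_v\colon v\in U_g\}$. The restriction of the candidate $t$ to $V_\mathcal{T}$ must be $\sigma$, so by \eqref{firstldell} it suffices to specify $t_u\colon\mathcal{T}_u\rightarrow\mathcal{T}_{\sigma(u)}$ for every $u\in V_\mathcal{T}\setminus W_\mathcal{T}$, orbit by orbit. On the $\pi$-orbit $\{v,\pi(v),\dotsc,\pi^{\ell-1}(v)\}$ of a representative $v\in U_g$, the required identity $(tgt^{-1})_{\sigma(u)}=g'_{\sigma(u)}$, combined with \eqref{congtgt1}, forces the recursion
\[
t_{\pi^{k}(v)}\coloneqq g'_{\sigma(\pi^{k-1}(v))}\circ t_{\pi^{k-1}(v)}\circ\bigl(g_{\pi^{k-1}(v)}\bigr)^{-1},\qquad k=1,2,\dotsc,\ell-1,
\]
starting from the given $t_v$. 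Each $t_{\pi^k(v)}$ is then automatically an isomorphism $\mathcal{T}_{\pi^k(v)}\to\mathcal{T}_{\sigma(\pi^k(v))}=\mathcal{T}_{(\pi')^k(\sigma(v))}$ as a composition of isomorphisms.

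The main obstacle is the cyclic consistency at the end of the orbit: applying the same recipe one more step, i.e.\ $k=\ell$, we must land on $t_v$ itself and not on some other map. Iterating the recursion and using \eqref{defXi} both for $g$ and for $g'$ (applied at $\sigma(v)$, since $\pi'=\sigma\pi\sigma^{-1}$), this closing equation reads
\[
t_v\stackrel{!}{=}\bigl[(g')^{\ell}\bigr]_{\sigma(v)}\circ t_v\circ\bigl(\bigl[g^{\ell}\bigr]_v\bigr)^{-1},
\]
which is precisely the hypothesis \eqref{congtgt3}. Thus (b) is exactly the cocycle condition that makes the inductive construction well defined on each orbit; on $W_\mathcal{T}$ no subtree data is required and $\sigma$ already controls the leaves via \eqref{squaresigmap17}.

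Once the $t_u$'s are defined for every $u\in V_\mathcal{T}$, the map
\[
t(u,w)\coloneqq\bigl(\sigma(u),t_u(w)\bigr),\qquad u\in V_\mathcal{T}\setminus W_\mathcal{T},\ w\in\widetilde{\mathcal{T}}_u,
\]
(and $t=\sigma$ on $V_\mathcal{T}$) defines an element of $\Aut(\mathcal{T})$ by \eqref{firstldell}. To check $tgt^{-1}=g'$ it is enough, by \eqref{firstldell} and \eqref{firstldell2}, to verify the identity at the first level and on each first-level subtree; the first level is handled by \ref{lemmaconj1}, and on $\mathcal{T}_u$ the construction of $t_{\pi(u')}$ from $t_{u'}$ was designed to yield, via \eqref{congtgt1}, exactly $(tgt^{-1})_u=g'_u$, after which the level-by-level verification propagates inside each $\mathcal{T}_u$ by the naturality formula \eqref{firstldell2}.
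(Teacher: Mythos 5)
Your proof is correct and takes essentially the same route as the paper's: the forward direction via Lemma \ref{lemmacongtgt2}, and for the converse the same recursion \eqref{congtgt4} along each orbit, with \eqref{congtgt3} recognized as exactly the closing/cocycle condition that makes the recursion consistent at step $k=\ell$. The only redundancy is the closing remark about "level-by-level propagation inside each $\mathcal{T}_u$" — once $(tgt^{-1})_u=g'_u$ is established as an equality of maps on $\mathcal{T}_u$, nothing further needs to be checked there.
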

\begin{proof}
The only if part follows from Lemma \ref{lemmacongtgt2}. Now suppose that \eqref{congtgt3} is verified for all $v\in U_g$. We define the isomorphisms $t_{\pi^k(v)}\colon\mathcal{T}_{\pi^k(v)}\rightarrow\mathcal{T}_{\sigma(\pi^k(v))}$ by setting inductively
\begin{equation}\label{congtgt4}
t_{\pi^k(v)}\coloneqq  g'_{\sigma(\pi^{k-1}(v))}\circ t_{\pi^{k-1}(v)}\circ\left(g_{\pi^{k-1}(v)}\right)^{-1},
\end{equation}
for $k=1,2,\dotsc,\ell-1$, where $\ell$ is the cardinality of the $\pi$-orbit of $v$. Note that \eqref{congtgt4} is just \eqref{congtgt1} with $tgt^{-1}$ replaced by $g'$ and $u=\sigma(\pi^{k-1}(v))$, so that we have just to verify \eqref{congtgt1} also in the case $k=\ell$, which is the relation involving $g_{\pi^{\ell-1}(v)}$ and $g'_{\sigma(\pi^{\ell-1}(v))}$. This will be achieved by using \eqref{congtgt3} as a compatibility condition: 

\begin{align*}
\left[g^\ell\right]_v=&g_{\pi^{\ell-1}(v)}\circ g_{\pi^{\ell-2}(v)}\circ\dotsb\circ g_{\pi(v)}\circ g_v&(\text{by }\eqref{defXi})\\
=&g_{\pi^{\ell-1}(v)}\circ\left[\left(t_{\pi^{\ell-1}(v)}\right)^{-1}\circ g'_{\sigma(\pi^{\ell-2}(v))}\circ t_{\pi^{\ell-2}(v)}\right]\circ&\\
&\circ\left[\left(t_{\pi^{\ell-2}(v)}\right)^{-1}\circ g'_{\sigma(\pi^{\ell-3}(v))}\circ t_{\pi^{\ell-2}(v)}\right]\circ&\\
&\circ\dotsb\circ\left[\left(t_{\pi^2(v)}\right)^{-1}\circ g'_{\sigma(\pi(v))}\circ t_{\pi(v)}\right]\circ\left[\left(t_{\pi(v)}\right)^{-1}\circ g'_{\sigma(v)}\circ t_v\right]&(\text{by }\eqref{congtgt4})\\
=&g_{\pi^{\ell-1}(v)}\circ\left(t_{\pi^{\ell-1}(v)}\right)^{-1}\circ g'_{\sigma(\pi^{\ell-2}(v))}\circ\dotsb\circ g'_{\sigma(\pi(v))}\circ g'_{\sigma(v)}\circ t_v&\\
=&g_{\pi^{\ell-1}(v)}\circ\left(t_{\pi^{\ell-1}(v)}\right)^{-1}\circ \left(g'_{\sigma(\pi^{\ell-1}(v))}\right)^{-1}\circ\left[\left(g'\right)^\ell\right]_{\sigma(v)}\circ t_v&(\text{by }\eqref{defXi})\\
=&g_{\pi^{\ell-1}(v)}\circ\left(t_{\pi^{\ell-1}(v)}\right)^{-1}\circ \left(g'_{\sigma(\pi^{\ell-1}(v))}\right)^{-1}\circ t_v\circ\left[g^\ell\right]_v.&(\text{by }\eqref{congtgt3})\\
\end{align*}
Therefore
\[
g'_{\sigma(\pi^{\ell-1}(v))}=t_v\circ g_{\pi^{\ell-1}(v)}\circ\left(t_{\pi^{\ell-1}(v)}\right)^{-1}
\]
which ends the proof of \eqref{congtgt1} in all cases so that, defining $t$ by means of $\sigma$ and the $t_u$'s, $u\in V_\mathcal{T}\setminus W_{\mathcal{T}}$, we get $tgt^{-1}=g'$.
\end{proof}

\begin{proposition}\label{Propconj}
With the notation in \eqref{defphig} and \eqref{defXi}, for all $u\in V_\mathcal{T}\setminus W_\mathcal{T}$ we have $(\varphi^g)_u=\varphi^{\left[g^\ell\right]_u}$.
\end{proposition}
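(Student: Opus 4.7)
The plan is to establish a natural identification between the tree $(\mathcal{P}^g)_{\varphi^g(u)}$ (codomain of $(\varphi^g)_u$) and the tree $\mathcal{P}^{[g^\ell]_u}$ (codomain of $\varphi^{[g^\ell]_u}$), and then verify that under this identification the two tree compositions coincide. Note first that, since $\ell$ is the length of the $g$-orbit of $u$, one has $g^\ell(u)=u$, so $[g^\ell]_u$ is a well-defined automorphism of $\mathcal{T}_u$; thus $\varphi^{[g^\ell]_u}$ and $\mathcal{P}^{[g^\ell]_u}$ are defined via \eqref{defphig}.

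The core lemma to prove is the following orbit-intersection identity: for every $w\in\mathcal{T}_u$,
\[
\{g^k(w):k\in\mathbb{Z}\}\cap\mathcal{T}_u=\{g^{k\ell}(w):k\in\mathbb{Z}\},
\]
that is, the $g$-orbit of $w$ intersects $\mathcal{T}_u$ exactly in the $[g^\ell]_u$-orbit of $w$. To see this, observe that since $g$ preserves levels and adjacency, the ancestor of $g^k(w)$ at the level of $u$ is $g^k(u)$; hence $g^k(w)\in\mathcal{T}_u$ if and only if $g^k(u)=u$, which by minimality of $\ell$ happens precisely when $\ell\mid k$. This is the step I expect to be the conceptual heart of the argument, but it is a short observation once phrased this way.

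With the lemma in hand, the map $\Theta\colon(\mathcal{P}^g)_{\varphi^g(u)}\rightarrow\mathcal{P}^{[g^\ell]_u}$ defined by $\Theta(O)\coloneqq O\cap\mathcal{T}_u$ is well defined and bijective: it is well defined because every $g$-orbit $O$ lying in $(\mathcal{P}^g)_{\varphi^g(u)}$ (i.e.\ consisting of $u$ or of descendants of vertices in the $g$-orbit of $u$) meets $\mathcal{T}_u$ in a single $[g^\ell]_u$-orbit by the lemma; it is surjective because any $[g^\ell]_u$-orbit $\{g^{k\ell}(w):k\}$ in $\mathcal{T}_u$ is the intersection with $\mathcal{T}_u$ of the $g$-orbit of $w$; and it is injective because an orbit in $\mathcal{P}^{[g^\ell]_u}$ determines $w$ up to $g^\ell$, hence the $g$-orbit containing it. A direct check shows $\Theta$ sends the root (the $g$-orbit $\{g^j(u)\}$) to $\{u\}$ and preserves adjacency: if $O'$ is a child of $O$ in $(\mathcal{P}^g)_{\varphi^g(u)}$, then elements of $O'\cap\mathcal{T}_u$ are children of elements of $O\cap\mathcal{T}_u$ in $\mathcal{T}_u$, and conversely. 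So $\Theta$ is a rooted tree isomorphism.

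Finally, to conclude $(\varphi^g)_u=\varphi^{[g^\ell]_u}$ under $\Theta$: for any $w\in\mathcal{T}_u$, by \eqref{defphig} applied to $g$ we have $(\varphi^g)_u(w)=\varphi^g(w)=\{g^k(w):k\in\mathbb{Z}\}$, and by \eqref{defphig} applied to $[g^\ell]_u$ we have $\varphi^{[g^\ell]_u}(w)=\{g^{k\ell}(w):k\in\mathbb{Z}\}$. The lemma gives exactly $\Theta\bigl((\varphi^g)_u(w)\bigr)=\varphi^{[g^\ell]_u}(w)$, completing the proof.
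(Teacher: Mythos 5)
Your proof is correct and takes essentially the same approach as the paper: the core observation---that for $w \in \mathcal{T}_u$ the $g$-orbit meets $\mathcal{T}_u$ exactly in $\{g^{k\ell}(w): k\in\mathbb{Z}\}$, because $g^k(\mathcal{T}_u)=\mathcal{T}_u$ forces $\ell\mid k$---is exactly the paper's argument. You simply spell out more explicitly the natural identification $\Theta$ between the codomain trees $(\mathcal{P}^g)_{\varphi^g(u)}$ and $\mathcal{P}^{[g^\ell]_u}$, which the paper leaves implicit.
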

\begin{proof}
Since $g$ sends $\mathcal{T}_{\pi^j(u)}$ to $\mathcal{T}_{\pi^{j+1}(u)}$, then $u',v'\in \mathcal{T}_u$ belong to the same $g$-orbit if and only if $v'=g^k(u')$ with $k$ a multiple of $\ell$. By \eqref{defXi} this happens if and only if $\left(\left[g^\ell\right]_u\right)^{k/\ell}(u')=v'$, that is if and only if $u',v'$ belongs to the same $\left[g^\ell\right]_u$-orbit.
\end{proof}

\begin{theorem}\label{Theoconj}
\begin{enumerate}[\rm(a)]
\item
Two elements $g,g'\in\Aut(\mathcal{T})$ are conjugate if and only if the corresponding tree compositions $\varphi^g$ and $\varphi^{g'}$ are equivalent.
\item\label{Theoconj2}
For every compositional tree $\mathcal{P}$ of $\mathcal{T}$ and for every $\psi\in\mathsf{C}(\mathcal{T},\mathcal{P})$ there exists $g\in\Aut(\mathcal{T})$ such that $\varphi^g=\psi$.
\end{enumerate}
\end{theorem}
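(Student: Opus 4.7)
I plan to prove (a) and (b) by induction on $h := H(\mathcal{T})$, handling each part separately with the same inductive device. The base case $h=1$ is immediate: since $\Aut(\mathcal{T})\rvert_{V_\mathcal{T}}=\Sym(V_\mathcal{T})$, part (a) reduces to the standard fact that two permutations are conjugate if and only if they have the same cycle type, which is precisely the numerical partition recorded by $\varphi^g$ (Example \ref{exampleh1}); for (b) one picks a cyclic ordering of each fibre $\psi^{-1}(x)$ and lets $g$ rotate it.

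For part (a), the forward direction is immediate: if $g'=tgt^{-1}$, the map $\omega\colon\mathcal{P}^g\to\mathcal{P}^{g'}$ sending the $g$-orbit of $u$ to the $g'$-orbit of $t(u)$ is a well-defined rooted tree isomorphism satisfying $\omega\circ\varphi^g=\varphi^{g'}\circ t$, so $\varphi^g\equiv\varphi^{g'}$. For the converse I suppose $\varphi^g\equiv\varphi^{g'}$ via $\tau\in\Aut(\mathcal{T})$ and $\omega$, and verify the hypotheses of Lemma \ref{lemmaconj}. The restriction $\sigma_0:=\tau\rvert_{V_\mathcal{T}}$ already maps $\pi$-cycles bijectively onto $\pi'$-cycles of the same length (and same first-level subtree type, since $\tau\in\Aut(\mathcal{T})$), so pre-composing $\sigma_0$ within each cycle by a suitable power of $\pi$ produces $\sigma\in\Aut(\mathcal{T})\rvert_{V_\mathcal{T}}$ with $\sigma\pi\sigma^{-1}=\pi'$, giving condition \eqref{lemmaconj1}. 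For each $v\in U_g$, Proposition \ref{Propconj} identifies $(\varphi^g)_v=\varphi^{[g^\ell]_v}$ and $(\varphi^{g'})_{\sigma(v)}=\varphi^{[(g')^\ell]_{\sigma(v)}}$; restricting the original equivalence and composing with the $g'$-induced isomorphism between $\mathcal{T}_{\sigma_0(v)}$ and $\mathcal{T}_{\sigma(v)}$ shows these restricted compositions are equivalent. Transporting $[(g')^\ell]_{\sigma(v)}$ into $\Aut(\mathcal{T}_v)$ by any isomorphism $\mathcal{T}_v\to\mathcal{T}_{\sigma(v)}$ and invoking the inductive hypothesis on the height-$(h-1)$ tree $\mathcal{T}_v$ yields the $t_v$ required by \eqref{congtgt3}; Lemma \ref{lemmaconj} then produces the conjugator $t$.

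For part (b), given $\psi\in\mathsf{C}(\mathcal{T},\mathcal{P})$, I fix a cyclic ordering $\psi^{-1}(x)=\{u_1,\dots,u_\ell\}$ of each fibre and set $\pi(u_i):=u_{i+1\bmod\ell}$. For each internal $x\in V_\mathcal{P}$ I use the pseudogroup from Theorem \ref{Prop1p5}\eqref{Prop1p53bis} to define $g_{u_i}:=\tau_{u_{i+1},u_i}$ for $i<\ell$; the inductive hypothesis of (b) applied to $\psi_{u_1}\in\mathsf{C}(\mathcal{T}_{u_1},\mathcal{P}_x)$ provides $h\in\Aut(\mathcal{T}_{u_1})$ with $\varphi^h=\psi_{u_1}$, and I close the cycle by $g_{u_\ell}:=h\circ\tau_{u_1,u_\ell}$. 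The composition property \eqref{proptau2b} yields $[g^\ell]_{u_1}=h$, hence $(\varphi^g)_{u_1}=\varphi^h=\psi_{u_1}$ by Proposition \ref{Propconj}; a telescoping argument using the same property gives $[g^\ell]_{u_i}=\tau_{u_i,u_1}\circ h\circ\tau_{u_1,u_i}$ for $i\ne 1$, and combining the regularity identity $\psi_{u_i}=\psi_{u_1}\circ\tau_{u_1,u_i}$ with the fact that $\psi_{u_1}=\varphi^h$ is constant on $h$-orbits (so $\psi_{u_1}\circ h=\psi_{u_1}$) forces $(\varphi^g)_{u_i}=\psi_{u_i}$. At the first level $\varphi^g(u_i)$ is the $g$-orbit $\psi^{-1}(x)$, and the canonical identification $\mathcal{P}^g\cong\mathcal{P}$ obtained by sending each fibre to its common $\psi$-image promotes everything to the equality $\varphi^g=\psi$.

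The main obstacle is securing \emph{literal} equalities rather than mere equivalences in part (b); this is precisely where the coordinate system of $\tau$'s from Theorem \ref{Prop1p5}\eqref{Prop1p53bis}, with its composition and restriction properties, becomes indispensable, since without their cross-subtree consistency the inductive construction would only splice together up to equivalence.
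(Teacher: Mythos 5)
Your overall strategy — induction on $h$, reducing part (a) to Lemma \ref{lemmaconj} and building $g$ in part (b) level by level out of the pseudogroup of $\tau$'s — is the same as the paper's, and the structure is sound. Two remarks.

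In part (a) the specific claim that ``pre-composing $\sigma_0:=\tau\rvert_{V_\mathcal{T}}$ within each cycle by a suitable power of $\pi$ produces $\sigma$ with $\sigma\pi\sigma^{-1}=\pi'$'' is false. If $\sigma_0$ is not already a cycle-intertwiner on a given $\pi$-orbit $O$ (and the equivalence $\gamma\circ\varphi^g=\varphi^{g'}\circ\tau$ gives no control over the internal cyclic order that $\tau$ imposes on $O$), then no $m$ makes $\sigma_0\circ\pi^{m}$ a cycle-intertwiner: from $\sigma_0\circ\pi^{m+1}=\pi'\circ\sigma_0\circ\pi^{m}$ one cancels $\pi^m$ and recovers $\sigma_0\circ\pi=\pi'\circ\sigma_0$, which is exactly what was in doubt. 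The paper avoids this by \emph{defining} $\sigma$ anew on each orbit by $\sigma(\pi^k(u)):=\pi'^k(\sigma_0(u))$ for a representative $u\in U_g$, which automatically intertwines the cycles and lands in $\Aut(\mathcal{T})\rvert_{V_\mathcal{T}}$ because all first-level subtrees in $(\varphi^g)^{-1}(x)\cup(\varphi^{g'})^{-1}(\gamma(x))$ are isomorphic. This is the fix you need; the rest of your (a) — restricting the equivalence, shifting from $\sigma_0(v)$ to $\sigma(v)$ by a power of $g'$, transporting $[(g')^\ell]_{\sigma(v)}$ to $\Aut(\mathcal{T}_v)$ and invoking the inductive hypothesis to feed Lemma \ref{lemmaconj} — is fine.

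In part (b) your construction is genuinely different from the paper's, and in fact more transparently correct. You place the recursively obtained automorphism $h$ at a single ``closing'' step, $g_{u_\ell}=h\circ\tau_{u_1,u_\ell}$ with $g_{u_i}=\tau_{u_{i+1},u_i}$ otherwise, and the composition property \eqref{proptau2b} telescopes cleanly to $[g^\ell]_{u_1}=h$ and $[g^\ell]_{u_i}=\tau_{u_i,u_1}\circ h\circ\tau_{u_1,u_i}$; combined with the regularity identity $\psi_{u_i}=\psi_{u_1}\circ\tau_{u_1,u_i}$, Proposition \ref{Propconj} and $\psi_{u_1}\circ h=\psi_{u_1}$ this gives $(\varphi^g)_{u_i}=\psi_{u_i}$ exactly as you say. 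The paper instead inserts $\widetilde{g}_x$ into \emph{every} factor (formula \eqref{defgpik}); as written that telescopes to $\widetilde{g}_x^{\ell_x}$ rather than $\widetilde{g}_x$, so your ``concentrated'' variant is the safer bookkeeping here.
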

\begin{proof}
\begin{enumerate}[\rm(a)]
\item
Suppose that $g,g'$ are conjugate: there exists $t\in\Aut(\mathcal{T})$ such that $g'=tgt^{-1}$. We define a tree isomorphism $\gamma\colon\mathcal{P}^g\rightarrow \mathcal{P}^{g'}$ by means of the natural correspondence between $g$-orbits and $g'$-orbits: if $\mathcal{P}^g\ni x=\{u,g(u),\dotsc,g^{\ell-1}(u)\}$ we set 
\[
\begin{split}
\gamma(x)\coloneqq x'\quad\text{ where }\quad\mathcal{P}^{g'}\ni x'=&\{t(u),t(g(u)),\dotsc,t(g^{\ell-1}(u))\}\\
\equiv&\{t(u),g'(t(u)),\dotsc,(g')^{\ell-1}(t(u))\}.
\end{split}
\]
Then we have $\left(\gamma\circ\varphi^g\right)(u)=x'=\left(\varphi^{g'}\circ t\right)(u)$, that is $\gamma\circ\varphi^g=\varphi^{g'}\circ t$ and $\varphi^g$ is equivalent to $\varphi^{g'}$. 

Conversely, suppose that $\varphi^g$ and $\varphi^{g'}$ are equivalent: there exist $\beta\in\Aut(\mathcal{T})$ and a tree isomorphism $\gamma\colon\mathcal{P}^g\rightarrow \mathcal{P}^{g'}$ such that the following diagram is commutative:
\begin{equation}\label{stardiagp25}
\xymatrix{
\mathcal{T}\ar[d]_\beta\ar[r]^{\varphi^g}&\mathcal{P}^g\ar[d]^\gamma\\\
\mathcal{T}\ar[r]^{\varphi^{g'}}&\mathcal{P}^{g'}}
\end{equation}
Suppose that $\pi,\pi'$ and $U_g$ are as in Lemma \ref{lemmaconj}. The proof is by induction on $H(\mathcal{T})$. For $H(\mathcal{T})=1$ it is obvious but we need to precise some facts. Choose $u\in U_g$ and set $x\coloneqq \varphi^g(u)$, so that $\left(\varphi^g\right)^{-1}(x)$ is the $\pi$-orbit of $u$. The $\beta$-image of $\left(\varphi^g\right)^{-1}(x)$ is $\left(\varphi^{g'}\right)^{-1}(\gamma(x))$, the $\pi'$-orbit of $\beta(u)$. Therefore these orbits are cyclic permutations of the same length, say $\ell$, so that there exists a bijection $\sigma^{(u)}\colon\left(\varphi^g\right)^{-1}(x)\rightarrow\left(\varphi^{g'}\right)^{-1}(\gamma(x))$ such that $\sigma^{(u)}\left(\pi^k(u)\right)=\pi'^k(\sigma^{(u)}(u)))$, $k=0,1,\dotsc,\ell-1$; we may also require that $\sigma^{(u)}(u)=\beta(u)$. The set of all $\sigma^{(u)}, u\in U_g$, define a permutation of $V_\mathcal{T}$ such that $\sigma\circ\pi\circ\sigma^{-1}=\pi'$, with the further property that $\sigma\Bigl(\left(\varphi^g\right)^{-1}(x)\Bigr)=\left(\varphi^{g'}\right)^{-1}(\gamma(x))$, which implies $\gamma\circ\varphi^g=\varphi^{g'}\circ\sigma$.

Now suppose that $H(\mathcal{T})\geq 2$. Arguing as in the case $H(\mathcal{T})=1$, we may construct a permutation $\sigma\in\Aut(\mathcal{T})\rvert_{V_\mathcal{T}}$ such that 
\[
\sigma\circ\pi\circ\sigma^{-1}=\pi',\quad \left(\gamma\rvert_{V_\mathcal{T}}\right)\circ\left(\varphi^g\rvert_{V_\mathcal{T}}\right)=\left(\varphi^{g'}\rvert_{V_\mathcal{T}}\right)\circ\sigma\quad\text{and}\quad \sigma(u)=\beta(u) \text{ for all }u\in U_g.
\]
We have just to notice that now each $\sigma^{(u)}\in\Aut(\mathcal{T})\rvert_{V_\mathcal{T}}$ because all the subtrees 
$\mathcal{T}_v$, $v\in \left(\varphi^g\right)^{-1}(x)\bigcup\left(\varphi^{g'}\right)^{-1}(\gamma(x))$,
are isomorphic; cf. \eqref{squaresigmap17} and \eqref{stardiagp25}. Now consider the automorphism $\beta^{-1}g'\beta$. By the first part of the proof, $\varphi^{g'}$ and $\varphi^{\beta^{-1}g'\beta}$ are equivalent and there exists a tree isomorphism $\gamma'\colon\mathcal{P}^{g'}\rightarrow\mathcal{P}^{\beta^{-1}g'\beta}$ such that 
\begin{equation}\label{phigammap26}
\gamma'\circ\varphi^{g'}=\varphi^{\beta^{-1}g'\beta}\circ\beta^{-1}. 
\end{equation}
By transitivity, also $\varphi^g$ and $\varphi^{\beta^{-1}g'\beta}$ are equivalent; in particular, for all $u\in U_g$, setting $x\coloneqq\varphi^g(u)$, $x'\coloneqq \gamma(x)$, we have, by \eqref{firstldell2}, \eqref{stardiagp25} and \eqref{phigammap26}
\[
\gamma'_{x'}\circ\gamma_x\circ\left(\varphi^g\right)_u=\gamma'_{x'}\circ\left(\varphi^{g'}\right)_{\beta(u)}\circ\beta_u
=\left(\varphi^{\beta^{-1}g'\beta}\right)_u,
\]
that is $\left(\varphi^g\right)_u$ and $\left(\varphi^{\beta^{-1}g'\beta}\right)_u$ are isomorphic, so that also equivalent. By Proposition \ref{Propconj}, also 
$\varphi^{\left[g^\ell\right]_u}$ and $\varphi^{\left[\left(\beta^{-1}g'\beta\right)^\ell\right]_u}$
are equivalent. Now we may invoke the inductive hypothesis: there exists $\widetilde{t}_u\in\Aut(\mathcal{T}_u)$ such that: $\widetilde{t}_u\circ \left[g^\ell\right]_u\circ\left(\widetilde{t}_u\right)^{-1}=\left[(\beta^{-1}g'\beta)^\ell\right]_u$. Therefore, from Lemma \ref{lemmacongtgt2} we deduce that 
\[
\widetilde{t}_u\circ \left[g^\ell\right]_u\circ\left(\widetilde{t}_u\right)^{-1}=\left(\beta^{-1}\right)_{\beta(u)}\circ\left[(g')^\ell\right]_{\beta(u)}\circ\left[\left(\beta^{-1}\right)_{\beta(u)}\right]^{-1};
\]
recalling that $\sigma(u)=\beta(u)$ and setting $t_u\coloneqq \left[\left(\beta^{-1}\right)_{\beta(u)}\right]^{-1}\circ\widetilde{t}_u\equiv\beta_u\circ\widetilde{t}_u$ (cf. \eqref{firstldell3}), we finally have
\[
t_u\circ\left[g^\ell\right]_u\circ(t_u)^{-1}=\left[(g')^\ell\right]_{\sigma(u)},\quad\text{ for all }u\in U_g.
\]
We may end the proof by invoking Lemma \ref{lemmaconj}
\item
By induction on $H(\mathcal{T})$. If $H(\mathcal{T})=1$ this is trivial, but we need to fix the notation. Suppose that $\psi\colon V_\mathcal{T}\rightarrow V_\mathcal{P}$ is a surjective map, for every $x\in V_\mathcal{P}$ fix $u_x\in\psi^{-1}(x)$ and choose an arbitrary cycle $\pi_x$ in $\psi^{-1}(x)$, say $u_x\rightarrow \pi_x(u_x)\rightarrow \dotsb\rightarrow \left(\pi_x\right)^{\ell_x-1}(u_x)\rightarrow u_x$, where $\ell_x\coloneqq\left\lvert \psi^{-1}(x)\right\rvert$. Then the product $\pi\coloneqq\prod_{x\in V_\mathcal{P}}\pi_x$ yields a permutation whose associated composition is $\psi$.

Now suppose that $H(\mathcal{T})\geq 2$ and fix a system of $\tau$'s for $\psi$ (cf. Definition \ref{Defpseudo}); actually, we will use just these $\tau$'s in the case $\psi(u)=\psi(v)$, when $\omega_{x,x}$ is the identity and we have strict equivalence as in \eqref{diagTRSuv2b}.
Then, at the first level, we argue as in the case $H(\mathcal{T})=1$: we construct the $\pi_x$'s and their product $\pi$. After that we apply induction to each subtree $\mathcal{T}_{u_x}$, $x\in V_\mathcal{P}\setminus W_\mathcal{P}$, getting automorphisms $\widetilde{g}_x\in\Aut(\mathcal{T}_{u_x})$ such that 
\begin{equation}\label{phipsigux}
\varphi^{\widetilde{g}_x}=\psi_{u_x}. 
\end{equation}
Then for every $u\in V_\mathcal{T}\setminus W_\mathcal{T}$ we define an isomorphisms $g_u\colon\mathcal{T}_u\rightarrow\mathcal{T}_{\pi(u)}$ by setting
\begin{equation}\label{defgpik}
g_{\pi^k(u_x)}\coloneqq \tau_{\pi^{k+1}(u_x),u_x}\circ\widetilde{g}_x\circ\tau_{u_x,\pi^k(u_x)}
\end{equation}
for all $x\in V_\mathcal{P}\setminus W_\mathcal{P}$ and $k=0,1,\dotsc,\ell_x-1$. At this point, we can define $g\in\Aut(\mathcal{T})$ by means of \eqref{firstldell}, using $\pi$ at the first level and the $g_u$'s. By the case $H(\mathcal{T})=1$, we have $\psi\rvert_{V_\mathcal{T}}=\varphi^g\rvert_{V_\mathcal{T}}$. Moreover, for every $x\in V_\mathcal{P}\setminus W_\mathcal{P}$, we have:
\[
\left[g^{\ell_x}\right]_{u_x}=g_{\pi^{\ell_x-1}(u_x)}\circ g_{\pi^{\ell_x-2}(u_x)}\circ\dotsb\circ g_{\pi(u_x)}\circ g_{u_x}
=\widetilde{g}_x\qquad(\text{by }\eqref{defXi}, \eqref{proptau2b}\text{ and }\eqref{defgpik})
\]
and therefore
\[
\left(\varphi^g\right)_{u_x}=\varphi^{\left[g^{\ell_x}\right]_{u_x}}
=\varphi^{\widetilde{g}_x}
=\psi_{u_x}\qquad(\text{by Proposition }\ref{Propconj}, \left[g^{\ell_x}\right]_{u_x}=\widetilde{g}_x \text{ and }\eqref{phipsigux})
\]
so that finally
\begin{align*}
\left(\varphi^g\right)_{\pi^k(u_x)}=&\left(\varphi^g\right)_{u_x}\circ \left[(g^k)_{u_x}\right]^{-1}&(\text{by }\eqref{starp25May})\\
=&\left(\varphi^g\right)_{u_x}\circ\left(\widetilde{g}_x\right)^{-k}\circ\tau_{u_x,\pi^k(u_x)} &(\text{by }\eqref{firstldell2}, \eqref{proptau2b}\text{ and }\eqref{defgpik})\\
=&\psi_{u_x}\circ\tau_{u_x,\pi^k(u_x)} &(\text{by }\left(\varphi^g\right)_{u_x}=\varphi^{\widetilde{g}_x}\text{ and }\varphi^{\widetilde{g}_x}\circ\widetilde{g}_x=\varphi^{\widetilde{g}_x})\\
=&\psi_{\pi^k(u_x)}. &(\text{by }\eqref{diagTRSuv2b})\\
\end{align*}
This ends the proof that $\varphi^g=\psi$.
\end{enumerate}
\end{proof}

\begin{corollary}\label{Corconjclasses}
There is a natural bijection between the set of all conjugacy classes of $\Aut(\mathcal{T})$ and $\mathsf{Par}(\mathcal{T})$ (cf. Definition \ref{defIrrG}). It is obtained by associating to every conjugacy class $C$ of $\Aut(\mathcal{T})$ the tree of partition $(\Lambda,\mathcal{Q})$ representing the equivalence class of tree compositions $\left\{\varphi^g\colon g\in C\right\}$.
\end{corollary}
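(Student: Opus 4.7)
The plan is essentially to package the work already done: Theorem~\ref{Theoconj} identifies conjugacy of $g,g'\in\Aut(\mathcal{T})$ with equivalence of the tree compositions $\varphi^g,\varphi^{g'}$ of Example~\ref{Examplephig}, while Theorem~\ref{Theorem38} together with Definition~\ref{defIrrG} identifies equivalence classes of tree compositions of $\mathcal{T}$ with the elements of $\mathsf{Par}(\mathcal{T})$. Composing these two bijections, I would define
\[
\Phi\colon\{\text{conjugacy classes of }\Aut(\mathcal{T})\}\longrightarrow\mathsf{Par}(\mathcal{T}),\qquad \Phi(C)\coloneqq(\Lambda,\mathcal{Q}),
\]
where $(\Lambda,\mathcal{Q})$ is the (isomorphism class of the) tree of partitions associated to $\varphi^g$ for any choice of $g\in C$.

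Well-definedness follows from the ``only if'' direction: if $g,g'\in C$ are conjugate, then by the first part of Theorem~\ref{Theoconj} the compositions $\varphi^g$ and $\varphi^{g'}$ are equivalent, and by Theorem~\ref{Theorem38} they determine isomorphic trees of partitions, hence represent the same element of $\mathsf{Par}(\mathcal{T})$. Injectivity is the same chain of implications read in reverse: if $\Phi(C)=\Phi(C')$, pick $g\in C$ and $g'\in C'$, observe that their trees of partitions are isomorphic, conclude by Theorem~\ref{Theorem38} that $\varphi^g$ and $\varphi^{g'}$ are equivalent, and then apply the ``if'' direction of Theorem~\ref{Theoconj} to obtain that $g$ and $g'$ are conjugate, so that $C=C'$.

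Surjectivity is where the second part of Theorem~\ref{Theoconj} enters. Given $(\Lambda,\mathcal{Q})\in\mathsf{Par}(\mathcal{T})$, unpack Definition~\ref{defIrrG} to obtain a compositional tree $\mathcal{P}$ of $\mathcal{T}$ and a representative $\psi\in\mathsf{C}(\mathcal{T},\mathcal{P})$ whose tree of partitions is isomorphic to $(\Lambda,\mathcal{Q})$. By the second part of Theorem~\ref{Theoconj}, there exists $g\in\Aut(\mathcal{T})$ with $\varphi^g=\psi$; then $\Phi$ applied to the conjugacy class of $g$ yields exactly $(\Lambda,\mathcal{Q})$. There is no genuine obstacle in the corollary itself: the substantive work has already been carried out in Lemma~\ref{lemmaconj}, Proposition~\ref{Propconj} and the two parts of Theorem~\ref{Theoconj}, so what remains is purely bookkeeping.
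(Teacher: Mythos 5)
Your proposal is correct and takes essentially the same route as the paper: the paper's proof is a one-line reference to Theorem~\ref{Theorem38} and Theorem~\ref{Theoconj}, and your argument is exactly the natural unpacking of that reference into well-definedness, injectivity (via Theorem~\ref{Theoconj}(a) composed with Theorem~\ref{Theorem38} in both directions), and surjectivity (via Theorem~\ref{Theoconj}(b)).
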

\begin{proof}
It follows immediately from Theorem \ref{Theoorbits}\eqref{Theoorbitsa} and Theorem \ref{Theoconj}.
\end{proof}

We recall that a group $G$ is {\em ambivalent} when every element of $G$ is conjugate to its inverse.

\begin{corollary}
The group $\Aut(\mathcal{T})$ is ambivalent.
\end{corollary}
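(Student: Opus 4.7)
The plan is to invoke Theorem \ref{Theoconj}(a), which reduces the ambivalence of $\Aut(\mathcal{T})$ to showing that for every $g\in\Aut(\mathcal{T})$ the tree compositions $\varphi^g$ and $\varphi^{g^{-1}}$ are equivalent. I will in fact prove the stronger statement that they are literally the same map between the same trees, so that the equivalence is witnessed by the pair of identity isomorphisms.

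The key observation is that the $g$-orbits and the $g^{-1}$-orbits on $\mathcal{T}$ coincide setwise. Indeed, since $\mathcal{T}$ is finite, every element of $\Aut(\mathcal{T})$ has finite order, so the $g$-orbit of any $u\in\mathcal{T}$ equals $\{g^k(u):k\in\mathbb{Z}\}$, which is manifestly invariant under replacing $g$ by $g^{-1}$. Therefore the vertex sets of $\mathcal{P}^g$ and $\mathcal{P}^{g^{-1}}$ at each level are identical. The adjacency relation in Example \ref{Examplephig}, namely that an orbit $y$ at level $k$ is a child of an orbit $x$ at level $k-1$ precisely when the elements of $y$ are (as vertices of $\mathcal{T}$) children of the elements of $x$, makes no reference to the generator and therefore coincides for $g$ and $g^{-1}$. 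Hence $\mathcal{P}^g=\mathcal{P}^{g^{-1}}$ as rooted trees, and consequently $\varphi^g(u)=\varphi^{g^{-1}}(u)$ for every $u\in\mathcal{T}$, that is $\varphi^g=\varphi^{g^{-1}}$.

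In particular $\varphi^g$ and $\varphi^{g^{-1}}$ are equivalent (take $\tau=\id_\mathcal{T}$ and $\omega=\id_{\mathcal{P}^g}$ in Definition \ref{defvarie}), so by Theorem \ref{Theoconj}(a) the elements $g$ and $g^{-1}$ are conjugate in $\Aut(\mathcal{T})$. Since $g$ was arbitrary, $\Aut(\mathcal{T})$ is ambivalent. There is no real obstacle here: the whole content of the argument is the elementary remark that the orbit of a finite-order permutation is the same whether we iterate forward or backward, which is why the heavy machinery of tree compositions and trees of partitions developed in the previous sections renders this result essentially automatic.
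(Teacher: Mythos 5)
Your proof is correct and takes exactly the paper's approach: the paper's entire argument is the one-line observation that $\varphi^g=\varphi^{g^{-1}}$ since $g$ and $g^{-1}$ have the same orbits on $\mathcal{T}$, combined with Theorem \ref{Theoconj}(a). You have simply spelled out the (genuinely elementary) reasons why the orbits, the tree $\mathcal{P}^g$, and hence $\varphi^g$ are literally unchanged under $g\mapsto g^{-1}$.
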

\begin{proof}
Just note that $\varphi^g=\varphi^{g^{-1}}$ because $g$ and $g^{-1}$ have the same orbits on $\mathcal{T}$.
\end{proof}

The following corollary has an obvious elementary proof but we prefer to deduce it from the theory of this section. 

\begin{corollary}
The group $\Aut(\mathcal{T})$ is trivial if and only if $\mathcal{T}$ has only the identity composition.
\end{corollary}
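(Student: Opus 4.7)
The plan is to deduce this immediately from Corollary \ref{Corconjclasses}, which sets up a bijection between the conjugacy classes of $\Aut(\mathcal{T})$ and the set $\mathsf{Par}(\mathcal{T})$ parametrizing the equivalence classes of tree compositions of $\mathcal{T}$. The key preliminary observation is that under this bijection, the identity element $e \in \Aut(\mathcal{T})$ corresponds to the equivalence class of the tree composition $\varphi^e$: since every $e$-orbit on $\mathcal{T}$ is a singleton, we have $\mathcal{P}^e = \mathcal{T}$ and $\varphi^e = \id_\mathcal{T}$, so the equivalence class of $\id_\mathcal{T}$ is always an element of $\mathsf{Par}(\mathcal{T})$.

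For the forward implication, I would assume $\Aut(\mathcal{T}) = \{e\}$, so that $\Aut(\mathcal{T})$ has exactly one conjugacy class. By Corollary \ref{Corconjclasses} this forces $\lvert\mathsf{Par}(\mathcal{T})\rvert = 1$, and the unique element must be the equivalence class of $\id_\mathcal{T}$ noted above. Hence every tree composition of $\mathcal{T}$ is equivalent to $\id_\mathcal{T}$, which is what is meant by $\mathcal{T}$ having only the identity composition. For the converse, I would assume $\lvert\mathsf{Par}(\mathcal{T})\rvert = 1$; then Corollary \ref{Corconjclasses} gives that $\Aut(\mathcal{T})$ has exactly one conjugacy class. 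Since in any group $\{e\}$ is a conjugacy class by itself, a group with only one conjugacy class is necessarily trivial, and we conclude $\Aut(\mathcal{T}) = \{e\}$.

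There is no real obstacle; the argument is essentially a translation of ``one conjugacy class'' under the bijection of Corollary \ref{Corconjclasses}. The author's remark that an elementary proof also exists refers to the direct observation that any $g\in\Aut(\mathcal{T})$, viewed as a map $g\colon\mathcal{T}\to\mathcal{T}$, is itself a tree composition (the regularity condition in Definition \ref{deftreecomp}\ref{deftreecomp2} holds vacuously since $g$ is injective), so if $\mathcal{T}$ admits only $\id_\mathcal{T}$ as a tree composition then $g=\id_\mathcal{T}$; but the route through Corollary \ref{Corconjclasses} has the merit of making the equivalence transparent in both directions via the conjugacy class count.
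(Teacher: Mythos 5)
Your main argument is correct and is precisely the deduction the paper has in mind when it says the corollary follows ``from the theory of this section'' rather than by the elementary route: you invoke Corollary \ref{Corconjclasses} to translate ``$\Aut(\mathcal{T})$ has exactly one conjugacy class'' into ``$\lvert\mathsf{Par}(\mathcal{T})\rvert=1$'', observe that the class of $\id_\mathcal{T}$ (via $\varphi^e$) is always present, and use the group-theoretic fact that a group with a single conjugacy class is trivial. Your identification of ``$\mathcal{T}$ has only the identity composition'' with ``every tree composition of $\mathcal{T}$ is equivalent to $\id_\mathcal{T}$'', equivalently every tree composition is injective (an isomorphism), is the right reading of the informal statement.

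One small inaccuracy in your closing aside about the elementary proof: the direct observation that each $g\in\Aut(\mathcal{T})$ is itself a tree composition only gives the implication ``only the identity composition $\Rightarrow$ $\Aut(\mathcal{T})$ trivial''. The elementary argument in the other direction is also available but uses a different mechanism: if some composition $\varphi$ is not injective, there are siblings $u\neq v$ with $\varphi(u)=\varphi(v)$, and the isomorphism $\tau_{v,u}$ supplied by the regularity condition (Definition \ref{deftreecomp}\ref{deftreecomp2}) can be promoted to a nontrivial automorphism of $\mathcal{T}$ swapping $\mathcal{T}_u$ and $\mathcal{T}_v$ and fixing everything else (exactly as in \eqref{defgsign}). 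So both directions have elementary proofs; the gain from going through Corollary \ref{Corconjclasses} is uniformity, not that it rescues an otherwise one-sided argument.
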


The following Corollary follows by applying Corollary \ref{Corconjclasses} to the trivial composition in Example \ref{treeparttvid}. See also \cite[Theorem (4.6)]{BORT} and \cite[Corollary 4.1]{GNS}, where the fact that the spherically transitive automorphisms form a single conjugacy class is proved for infinite spherically homogeneous trees.

\begin{corollary}\label{Corgenaddmach}
There exists $g\in\Aut(\mathcal{T})$ which has the same orbits of the whole group $\Aut(\mathcal{T})$. Moreover, two automorphisms with this property are conjugated in $\Aut(\mathcal{T})$.
\end{corollary}

\section{The irreducible representations via Clifford theory}\label{SecClifford}

In the present section, we want to apply the representation theory of groups of the form $F\wr S_n$ to $\Aut(\mathcal{T})$, taking the decomposition in \eqref{Autp17} into account;  cf. \cite[Theorem 2.6.1]{book3}, \cite[Theorem 4.4.3]{JK} . This way we prove that there is a 
natural bijection between $\mathsf{Par}(\mathcal{T})$ and the set of equivalence classes of irreducible representations of $\Aut(\mathcal{T})$.
We first make a preliminary observation. In Clifford theory, if $N\trianglelefteq I$ then the extension of a representation from $N$ to $I$, if it exists, in general it is not unique and all the extensions may be obtained by tensoring a fixed one with the one dimensional representations of $I/N$; cf. \cite[Theorem 1.3.9]{book3}, \cite[Theorem 2.18]{book6}. In our setting, we have to choose carefully the extension because the tensor product with a one dimensional representation as above would change some of the involved partitions with their transposes. We will follow this strategy: we will use the permutation representations that arise from the action on the tree compositions and we will construct the irreducible representations as multiplicity one subrepresentations of them. 

In what follows, the action \eqref{action2Aut} in the case $k=1_{\text{\rm Aut}(\mathcal{P})}$ will be denoted by:
\begin{equation}\label{defgvaract}
g\varphi\coloneqq\varphi\circ g^{-1},
\end{equation} 
for all $g\in\Aut(\mathcal{T})$; in particular, by \eqref{firstldell3} we have 
\begin{equation}\label{defgvaract2}
[g\varphi]_u=\varphi_{g^{-1}(u)}\circ\left(g^{-1}\right)_u=\varphi_{g^{-1}(u)}\circ\left(g_{g^{-1}(u)}\right)^{-1}, \quad\text{ for all } u\in V_\mathcal{T}\setminus W_\mathcal{T}.
\end{equation} 
If $\varphi\in\mathsf{C}(\mathcal{T},\mathcal{P})$ we set $(\mathcal{P}, \lvert\varphi^{-1}\rvert)\coloneqq \{g\varphi\colon g\in \Aut(\mathcal{T})\}\equiv$ the $\Aut(\mathcal{T})$-orbit on $\mathsf{C}(\mathcal{T},\mathcal{P})$ containing $\varphi$, cf. Theorem \ref{Theoorbits}\eqref{Theoorbitsb}. The stabilizer of $\varphi$ in $\Aut(\mathcal{T})$ will be denoted $K_\varphi$ so that $(\mathcal{P},\lvert\varphi^{-1}\rvert)=\Aut(\mathcal{T})/K_\varphi$ as homogeneous spaces and 
\begin{equation}\label{deforbitphi}
\mathsf{C}(\mathcal{T},\mathcal{P})=\bigsqcup_{\lvert\varphi^{-1}\rvert\in\mathsf{Type}(\mathcal{T},\mathcal{P})}(\mathcal{P},\lvert\varphi^{-1}\rvert).
\end{equation}

\begin{definition}\label{defYoungmodules}{\rm
If $\varphi\in\mathsf{C}(\mathcal{T},\mathcal{P})$ we denote by $M^{\lvert\varphi^{-1}\rvert}$ the permutation representation of $\Aut(\mathcal{T})$ on $(\mathcal{P},\lvert\varphi^{-1}\rvert)$.
}
\end{definition}

That is, $M^{\lvert\varphi^{-1}\rvert}$ is the vector space of all functions $f\colon(\mathcal{P},\lvert\varphi^{-1}\rvert)\rightarrow\mathbb{C}$ with the left action $[gf](\psi)\coloneqq f(g^{-1}\psi)$, for all $g\in\Aut(\mathcal{T})$, $\psi\in(\mathcal{P},\lvert\varphi^{-1}\rvert)$.

\begin{proposition}\label{PropinvMMt}
If $\varphi$ and $\psi$ are equivalent as tree compositions of $\mathcal{T}$ then $M^{\lvert\varphi^{-1}\rvert}$ and $M^{\lvert\psi^{-1}\rvert}$ are equivalent
 as $\Aut(\mathcal{T})$-representations.
\end{proposition}
\begin{proof}
If $\varphi$ and $\psi$ are equivalent tree compositions as in the first diagram in \eqref{defequiviso}, but with $\mathcal{T}_1=\mathcal{T}_2=\mathcal{T}$, then $\psi=\omega\circ\varphi\circ\tau^{-1}$, with $\tau\in\Aut(\mathcal{T})$ and $\omega$ an isomorphism between the corresponding compositional trees. Therefore $\psi\circ g^{-1}=\omega\circ\varphi\circ\left(\tau^{-1}\circ g^{-1}\circ\tau\right)\circ\tau^{-1}$ for all $g\in\Aut(\mathcal{T})$ and it follows that 
\begin{equation}\label{Kconj}
K_{\varphi}=\tau^{-1}K_{\psi}\tau.
\end{equation}
From \cite[Proposition 3.1.11]{book} we deduce that the homogeneous spaces $\Aut(\mathcal{T})/K_{\varphi_1}$ and $\Aut(\mathcal{T})/K_{\varphi_2}$ are isomorphic and therefore the corresponding permutation representations are equivalent. 
\end{proof}

From Theorem \ref{Theoorbits}\eqref{Theoorbitsa} and Proposition \ref{PropinvMMt}, we  deduce that there is a natural bijection between $\mathsf{Par}(\mathcal{T})$ and the permutation module $M^{\lvert\varphi^{-1}\rvert}$'s.  By means of the tools developed in the preceding sections, we will analyze these permutation representations in the setting of Clifford theory. Suppose that $\varphi\in\mathsf{C}(\mathcal{T},\mathcal{P})$ and that $(\Lambda,\mathcal{Q})$ is its associated tree of partitions, with $\alpha$ as in Theorem \ref{Prop1p5}. By Proposition \ref{Propepsphitv}, applied to the realization $\delta\coloneqq\alpha\circ\varphi$, there exists a surjective tree homomorphism $\varepsilon\colon\mathcal{Q}\rightarrow\mathcal{C}$ such that $\varepsilon\circ\delta=\tv_\mathcal{T}$. In the following diagrams we summarize the maps that will be used and the corresponding first level compositions; the first is commutative.
\[
\xymatrix@R=8pt{
\mathcal{T}\ar@/^1.4pc/[rr]^(.55){\delta}\ar@/_1.4pc/[rrr]_(.55){\text{\rm tv}_\mathcal{T}}\ar[r]^{\varphi}&\mathcal{P}\ar[r]^{\alpha}&\mathcal{Q}\ar[r]^{\varepsilon}&\mathcal{C}\\
{\scriptstyle \bigsqcup\limits_{c\in V_\mathcal{C}}\bigsqcup\limits_{a\in\varepsilon^{-1}(c)}\delta^{-1}(a)}&&&\\
{\scriptstyle V_\mathcal{T}=\bigsqcup\limits_{c\in V_\mathcal{C}}\bigsqcup\limits_{a\in\varepsilon^{-1}(c)}\bigsqcup\limits_{x\in \alpha^{-1}(a)}\varphi^{-1}(x)}\ar@{=}[d]\ar@{=}[u]&{\scriptstyle V_\mathcal{P}=\bigsqcup\limits_{c\in V_\mathcal{C}}\bigsqcup\limits_{a\in\varepsilon^{-1}(c)}\alpha^{-1}(a)}\ar[l]&\ar[l]{\scriptstyle V_\mathcal{Q}=\bigsqcup\limits_{c\in V_\mathcal{C}}\varepsilon^{-1}(c)}\ar@/_1.4pc/[llu]&\ar[l]{\scriptstyle V_\mathcal{C}}\ar@/^1.4pc/[llld]\\
{\scriptstyle \bigsqcup\limits_{c\in V_\mathcal{C}}\text{\rm tv}_\mathcal{T}^{-1}(c)}&&&
} 
\]
We set 
\begin{equation}\label{irredsigcbis}
N\coloneqq\prod_{u\in V_\mathcal{T}}\Aut\left(\mathcal{T}_u\right)\qquad\text{ and }\qquad I\coloneqq\prod_{c\in V_\mathcal{C}}\left[\Aut\left(\mathcal{T}_{u_c}\right)\wr\prod_{a\in\varepsilon^{-1}(c)}\Sym\left(\delta^{-1}(a)\right)\right],
\end{equation}
where we think of $I$ as the subgroup of all $g\in\Aut(\mathcal{T})$ such that $g\left(\delta^{-1}(a)\right)=\delta^{-1}(a)$, for all $a\in V_\mathcal{Q}$. Clearly,
$N$ is a normal subgroup of $\Aut(\mathcal{T})$; we will show that $I$ an inertia group. 
Every $\psi\in(\mathcal{P},\lvert\varphi^{-1}\rvert)$ determines a set composition $V_\mathcal{T}=\bigsqcup_{x\in V_\mathcal{P}}\psi^{-1}(x)$ and we denote by $\widetilde{\Gamma}_\varphi$ the set of all the set compositions that arise in this way. The group \eqref{squaresigmap17} acts on $\widetilde{\Gamma}_\varphi$ as in Example \ref{exampleh1}: $\sigma\gamma\coloneqq\gamma\circ\sigma^{-1}$; equivalently, $\sigma\left(\bigsqcup_{x\in V_\mathcal{P}}\gamma^{-1}(x)\right)=\bigsqcup_{x\in V_\mathcal{P}}\sigma\left(\gamma^{-1}(x)\right)$. For $\gamma\in\widetilde{\Gamma}_\varphi$ we denote by $(\mathcal{P},\lvert\varphi^{-1}\rvert)_\gamma$ the set of all $\psi$ such that $\psi^{-1}(x)=\gamma^{-1}(x)$, for all $x\in V_\mathcal{P}$. In particular, $\gamma_0$ will denote that element of $\widetilde{\Gamma}_\varphi$ such that $\gamma_0^{-1}(x)=\varphi^{-1}(x)$, for all $x\in V_\mathcal{P}$. After that, we denote by $\Gamma_\varphi$ the set of all $\gamma\in\widetilde{\Gamma}_\varphi$ such that 
\begin{equation}\label{gamxdela}
\gamma^{-1}(x)\subseteq\delta^{-1}(a)\;\text{ if }\;\alpha(x)=a.
\end{equation}
From \eqref{defgvaract}, \eqref{defgvaract2} and \eqref{gamxdela} we deduce that the group $I$
acts transitively on the set $\bigsqcup_{\gamma\in\Gamma_\varphi}(\mathcal{P},\lvert\varphi^{-1}\rvert)_\gamma$. Each set $(\mathcal{P},\lvert\varphi^{-1}\rvert)_\gamma$, $\gamma\in\Gamma_\varphi$, is $N$-invariant and, with respect to this group, all of them are isomorphic homogeneous spaces (we are taking automorphisms that fixes all the vertices in $V_\mathcal{T}$); all the corresponding permutation representations of $N$ are equivalent to $\bigotimes_{u\in V_\mathcal{T}}M^{\lvert\varphi_u^{-1}\rvert}$. By Theorem \ref{Prop1p5} and Proposition \ref{PropinvMMt}, $M^{\lvert\varphi_u^{-1}\rvert}$ and $M^{\lvert\varphi_w^{-1}\rvert}$ are equivalent if and only if $\delta(u)=\delta(w)$. 

From \eqref{defgvaract} and \eqref{defgvaract2} it follows that $(\mathcal{P},\lvert\varphi^{-1}\rvert)_{\gamma_0}$ is invariant also under the action of
\[
K\coloneqq\prod_{c\in V_\mathcal{C}}\left\{\Aut\left(\mathcal{T}_{u_c}\right)\wr\prod_{a\in \varepsilon^{-1}(c)}\left[\prod_{x\in\alpha^{-1}(a)}\Sym\left(\varphi^{-1}(x)\right)\right]\right\},
\]
which is the subgroup of all $g\in\Aut(\mathcal{T})$ such that $g\left(\varphi^{-1}(x)\right)=\varphi^{-1}(x)$, for all $x\in V_\mathcal{P}$ (equivalently, $(g\varphi)(u)=\varphi(u)$ for all $u\in V_\mathcal{T}$).
Let the $\omega$'s be as in Definition \ref{Defpseudo} for $\varphi$.

\begin{lemma}\label{lemcdotact}
 For $g\in I$ and $\psi\in(\mathcal{P},\lvert\varphi^{-1}\rvert)_{\gamma_0}$ define $g\cdot\psi$ by settting $\left[g\cdot\psi\right](u)\coloneqq \psi(u)$ for all $u\in V_\mathcal{T}$ and then
\begin{equation}\label{defcdotact}
[g\cdot\psi]_u\coloneqq\omega_{\psi(u),\psi\left(g^{-1}u\right)}\circ\psi_{g^{-1}u}\circ\left(g^{-1}\right)_u.
\end{equation}
\[
\xymatrix{
\mathcal{T}_u\ar[d]_{\left(g^{-1}\right)_u}\ar[rr]^{[g\cdot\psi]_u}&&\mathcal{P}_{\psi(u)}\\
\mathcal{T}_{g^{-1}u}\ar[rr]_{\psi_{g^{-1}u}}&&\mathcal{P}_{\psi\left(g^{-1}u\right)}\ar[u]_{\omega_{\psi(u),\psi\left(g^{-1}u\right)}}}.
\]
Then this is an action of $I$ on $(\mathcal{P},\lvert\varphi^{-1}\rvert)_{\gamma_0}$ which extends the actions of $N$ and $K$.
\end{lemma}
\begin{proof}
It is obvious that $g\cdot\psi\in(\mathcal{P},\lvert\varphi^{-1}\rvert)_{\gamma_0}$ for all $\psi\in(\mathcal{P},\lvert\varphi^{-1}\rvert)_{\gamma_0}$, $g\in I$.
If also $h\in I$ then
\[
\begin{split}
[(h\cdot(g\cdot\psi)]_u=&\omega_{\psi(u),(g\cdot\psi)\left(h^{-1}u\right)}\circ(g\cdot\psi)_{h^{-1}u}\circ\left(h^{-1}\right)_u\\
(\text{by } \eqref{firstldell3})\quad=&\omega_{\psi(u),\psi\left(h^{-1}u\right)}\circ\omega_{\psi\left(h^{-1}u\right),\psi\left(g^{-1}h^{-1}u\right)}\circ\psi_{g^{-1}h^{-1}u}\circ\left(g_{g^{-1}h^{-1}u}\right)^{-1}\circ\left(h_{h^{-1}u}\right)^{-1}\\
(\text{by }\eqref{proptau2b})\quad=&\omega_{\psi(u),\psi\left((hg)^{-1}u\right)}\circ\psi_{(gh)^{-1}u}\circ\left(h_{h^{-1}u}\circ g_{(hg)^{-1}u}\right)^{-1}\\
=&\omega_{\psi(u),\psi\left((hg)^{-1}u\right)}\circ\psi_{(gh)^{-1}u}\circ\left[\left(h g\right)_{(hg)^{-1}u}\right]^{-1}
=[(hg)\cdot\psi)]_u,
\end{split}
\]
so that we have proved that it is an action. Moreover, its restriction to both $N$ and $K$ coincides with the natural actions: $(g\cdot\psi)_u=\psi_u\circ g_u^{-1}$ if $g\in N$ and, by $\omega_{x,x}=\id_{\mathcal{P}_x}$, $(g\cdot\psi)_u=\psi_{g^{-1}u}\circ \left(g^{-1}\right)_u$ if $g\in K$. 
\end{proof}

\begin{remark}
{\rm
If $g\in I$ and $\gamma=g^{-1}\gamma_0$ then the map 
\[
(\mathcal{P},\lvert\varphi^{-1}\rvert)_\gamma\ni\xi\rightarrow T_g\xi\in(\mathcal{P},\lvert\varphi^{-1}\rvert)_{\gamma_0},
\]
defined by setting $\left[T_g\xi\right](u)\coloneqq \xi\left(g^{-1}u\right)$ and $\left[T_g\xi\right]_u\coloneqq\omega_{\xi\left(g^{-1}u\right),\xi(u)}\circ\xi_u$, for all $u\in V_\mathcal{T}$, is a bijection. This way we may recover \eqref{defcdotact}: if $g\in I$, $\psi\in(\mathcal{P},\lvert\varphi^{-1}\rvert)_{\gamma_0}$ and $g\psi\in(\mathcal{P},\lvert\varphi^{-1}\rvert)_\gamma$ then $g\cdot\psi=T_{g^{-1}}(g\psi)$, that is $g\;\cdot\equiv T_{g^{-1}}\circ g$. In other words, by means of the bijections $T_g$ the natural $g$-image of $\psi$ (given by \eqref{defgvaract}) is returned back to $(\mathcal{P},\lvert\varphi^{-1}\rvert)_{\gamma_0}$ and this way we get an action of $I$ on $(\mathcal{P},\lvert\varphi^{-1}\rvert)_{\gamma_0}$.
}
\end{remark}

We denote by $W^\varphi$ the permutation representation of $I$ corresponding to \eqref{defcdotact}, so that $\Res^I_NW^\varphi=\bigotimes_{u\in V_\mathcal{T}}M^{\lvert\varphi_u^{-1}\rvert}$, that is we may consider $W^\varphi$ as an extension of $\bigotimes_{u\in V_\mathcal{T}}M^{\lvert\varphi_u^{-1}\rvert}$ from $N$ to $I$; we also use the fact that 
\begin{equation}\label{ResIKW}
\Res^I_KW^\varphi=\bigotimes_{u\in V_\mathcal{T}}M^{\lvert\varphi_u^{-1}\rvert},
\end{equation} 
where $\bigotimes_{u\in V_\mathcal{T}}M^{\lvert\varphi_u^{-1}\rvert}$ is endowed with the natural representation of $K$. It is useful to write down the action on tensor products corresponding to the permutation representation $W^\varphi$.
\begin{lemma}\label{lemcdotacttens}
If $F_u\in M^{\lvert\varphi_u\rvert}$ then, for all $g\in I$,
\begin{equation}\label{exttensprod}
g\left[\otimes_{u\in V_\mathcal{T}} F_u\right]=\otimes_{u\in V_\mathcal{T}} H_u,\quad\text{ where } \quad H_u\left(\psi_u\right)=F_{g^{-1}u}\left(\omega_{\psi\left(g^{-1}u\right),\psi(u)}\circ\psi_{u}\circ g_{g^{-1}u}\right).
\end{equation}
\end{lemma}
\begin{proof}
If $\psi\in(\mathcal{P},\lvert\varphi^{-1}\rvert)_{\gamma_0}$ then
$\left[\otimes_{u\in V_\mathcal{T}} F_u\right](\psi)=\prod_{u\in V_\mathcal{T}}F_u(\psi_u)$; if also $g\in N$ then
\[
\left\{g\left[\otimes_{u\in V_\mathcal{T}} F_u\right]\right\}(\psi)=\prod_{u\in V_\mathcal{T}}F_u\left(\psi_u\circ g_u\right)=\prod_{u\in V_\mathcal{T}}\left[g_uF_u\right]\left(\psi_u\right)\;\Longrightarrow\; g\left[\otimes_{u\in V_\mathcal{T}} F_u\right]=\otimes_{u\in V_\mathcal{T}}\left[g_uF_u\right].
\]
Therefore if $g\in I$ then $[gF](\psi)=F\left((g^{-1})\cdot\psi\right)$ so that 
\[
\begin{split}
\left\{g\left[\otimes_{u\in V_\mathcal{T}} F_u\right]\right\}(\psi)=&\prod_{u\in V_\mathcal{T}}F_u\left(\left[(g^{-1})\cdot\psi\right]_u\right)=\prod_{u\in V_\mathcal{T}}F_u\left(\omega_{\psi(u),\psi\left(gu\right)}\circ\psi_{gu}\circ g_u\right)\\
(u\rightarrow g^{-1}u)\quad=&\prod_{u\in V_\mathcal{T}}F_{g^{-1}u}\left(\omega_{\psi\left(g^{-1}u\right),\psi(u)}\circ\psi_{u}\circ g_{g^{-1}u}\right).\\
\end{split}
\]
\end{proof}

After that, we set
\[
\begin{split}
X\coloneqq &I/K\cong (I/N)/(K/N)\cong\left[\prod_{a\in V_\mathcal{Q}}\Sym\left(\delta^{-1}(a)\right)\right]/\left\{\prod_{a\in V_\mathcal{Q}}\left[\prod_{x\in\alpha^{-1}(a)}\Sym\left(\varphi^{-1}(x)\right)\right]\right\}\\
&\cong\prod_{a\in V_\mathcal{Q}}\left\{\Sym\left(\delta^{-1}(a)\right)/\left[\prod_{x\in\alpha^{-1}(a)}\Sym\left(\varphi^{-1}(x)\right)\right]\right\}\cong
\prod_{a\in V_\mathcal{Q}}S_{\lambda^a},
\end{split}
\]
where $S_{\lambda^a}$ is the Young subgroup associated to the partition $\lambda^a$. It follows that the permutation representation of $I$ on $X$ coincides with the inflation of $\bigotimes_{a\in V_\mathcal{Q}}M^{\lambda^a}$ from $I/N$ to $I$. 

\begin{theorem}
We have
\begin{equation}\label{MvarphiCli}
M^{\lvert\varphi^{-1}\rvert}=\Ind_I^{\text{\rm Aut}(\mathcal{T})}\left\{\left[\widetilde{\bigotimes}_{u\in V_\mathcal{T}}M^{\lvert\varphi_u^{-1}\rvert}\right]\bigotimes\left[\overline{\bigotimes}_{a\in V_\mathcal{Q}}M^{\lambda^a}\right]\right\},
\end{equation}
where where $\widetilde{\otimes}$ and $\overline{\otimes}$ denote, respectively, the extension by means of \eqref{exttensprod} and the inflation described above.
\end{theorem}
\begin{proof}
Let $\Sigma$ be a set of representatives for the left cosets of $K/N$ in $I/N$. Then $\Gamma_\varphi=\bigsqcup_{\sigma\in \Sigma}\{\sigma\gamma_0\}$ and therefore 
\begin{equation}\label{gamGamsigSig}
\bigsqcup_{\gamma\in\Gamma_\varphi}(\mathcal{P},\lvert\varphi^{-1}\rvert)_\gamma=\bigsqcup_{\sigma\in \Sigma}(\mathcal{P},\lvert\varphi^{-1}\rvert)_{\sigma\gamma_0}.
\end{equation}
If $U^\varphi$ denotes the permutation representation of $I$ on $\bigsqcup_{\gamma\in\Gamma_\varphi}(\mathcal{P},\lvert\varphi^{-1}\rvert)_\gamma$ (with respect to \eqref{defgvaract}) then
\begin{align*}
U^\varphi=&\Ind_K^I \left[\bigotimes_{u\in V_\mathcal{T}}M^{\lvert\varphi_u^{-1}\rvert}\right]&(\text{by }\eqref{gamGamsigSig})\\
=&\Ind_K^I\Res_K^IW^\varphi& (\text{by }\eqref{ResIKW})\\
=&\left[\widetilde{\bigotimes}_{u\in V_\mathcal{T}}M^{\lvert\varphi_u^{-1}\rvert}\right]\bigotimes\left[\overline{\bigotimes}_{a\in V_\mathcal{Q}}M^{\lambda^a}\right].&(\text{by }\cite[\rm Corollary\; 11.1.17]{book4})
\end{align*}
Finally, if $T$ a set of representatives for the left cosets of $\prod_{c\in V_\mathcal{C}}\prod_{a\in \varepsilon^{-1}(c)}\left[\Sym\left(\delta^{-1}(a)\right)\right]$ in \eqref{squaresigmap17} then we have the compositions $\widetilde{\Gamma}_\varphi=\bigsqcup_{\tau\in T}\tau\Gamma_\varphi$ and 
\[
(\mathcal{P},\lvert\varphi^{-1}\rvert)=\bigsqcup_{\gamma\in\widetilde{\Gamma}_\varphi}(\mathcal{P},\lvert\varphi^{-1}\rvert)_\gamma=\bigsqcup_{\tau\in T}\left[\bigsqcup_{\gamma\in\Gamma_\varphi}(\mathcal{P},\lvert\varphi^{-1}\rvert)_{\tau\gamma}\right]
\]
so that $M^{\lvert\varphi^{-1}\rvert}=\Ind_I^{\text{\rm Aut}(\mathcal{T})}U^\varphi$.
\end{proof}

Now we apply Clifford theory to the construction of the irreducible representations. We denote by $\widehat{\Aut(\mathcal{T})}$ the set of all equivalence classes of irreducible representations of $\Aut(\mathcal{T})$.

\begin{theorem}\label{irredClifftheo}
There is a natural bijection between $\widehat{\Aut(\mathcal{T})}$ and $\mathsf{Par}(\mathcal{T})$; we denote by $Z^\Lambda$ the irreducible representation corresponding to $\Lambda\in\mathsf{Par}(\mathcal{T})$.
Moreover, if $\varphi$ is a tree composition of $\mathcal{T}$ with $\Lambda$ as its tree of partitions then it is possible to construct $Z^\Lambda$ in such a way that it is a subspace of $M^{\lvert\varphi^{-1}\rvert}$ (where it has multiplicity one). 
\end{theorem}
\begin{proof}
The proof is by induction on $H(\mathcal{T})$. For $H(\mathcal{T})=1$ it is just the natural bijection between irreducible representations of $\Sym\left(V_\mathcal{T}\right)$ and the integer partitions of $\lvert V_\mathcal{T}\rvert$ and the fact that the irreducible representation $S^\lambda$ is contained in the permutation module $M^\lambda$, with multiplicity one, is well known. 

Now suppose that $H(\mathcal{T})>1$ and that we have constructed a bijection for trees of height $<H(\mathcal{T})$. If $(\Lambda',\mathcal{Q}')$ is a tree of partitions of height $<H(\mathcal{T})$ we denote by $Z^{\Lambda'}$ the associated irreducible representation and we suppose that it may be constructed in such a way that it is contained, with multiplicity one, in the permutation representation $M^{\lvert\varphi'\rvert}$, if $(\Lambda',\mathcal{Q}')$ is the tree of partitions associated to $\varphi'$. 

In order to apply Clifford theory to $\Aut(\mathcal{T})$, we must choose an irreducible representation of each group $\Aut\left(\mathcal{T}_u\right)$, $u\in V_\mathcal{T}\setminus W_\mathcal{T}$ and an irreducible representation of $\Sym\left(W_\mathcal{T}\right)$; by the inductive hypothesis, this means that we choose $(\Lambda^{(u)},\mathcal{Q}^{(u)})\in\mathsf{Par}\left(\mathcal{T}_u\right)$ and a partition $\mu$ of $\lvert W_\mathcal{T}\rvert$. 
Then we introduce the following equivalence relation: for $c\in V_\mathcal{C}\setminus W_\mathcal{T}$, $u,v\in\tv_\mathcal{T}^{-1}(c)$, we write $u\approx v$ if the trees of partitions $(\Lambda^{(u)},\mathcal{Q}^{(u)})$ and $(\Lambda^{(v)},\mathcal{Q}^{(v)})$ are isomorphic (that is, if the irreducible representations $Z^{\Lambda^{(u)}}$ and $Z^{\Lambda^{(v)}}$ are equivalent). We denote by $A_c$ the quotient set $\tv_\mathcal{T}^{-1}(c)/\approx$, by $\delta\colon \tv_\mathcal{T}^{-1}(c)\rightarrow A_c$ the quotient map and by $v_a$ the choice of a representative in $\delta^{-1}(a)$, for each $a\in A_c$. After that, we set
\begin{equation}\label{irredsigc}
Z\coloneqq\bigotimes_{u\in V_\mathcal{T}}Z^{\Lambda^{(u)}}\sim\bigotimes_{c\in V_\mathcal{C}}\bigotimes_{a\in A_c}\left[\bigotimes_{u\in \delta^{-1}(a)}Z^{\Lambda^{(u)}}\right]\sim\bigotimes_{c\in V_\mathcal{C}}\bigotimes_{a\in A_c}\left(Z^{\Lambda^{(v_a)}}\right)^{\otimes^{\lvert\delta^{-1}(a)\rvert}}
\end{equation}
which is an irreducible representation of $N$ whose inertia group is $I$ (cf. \eqref{irredsigcbis}); just replace, for the moment, $\varepsilon^{-1}(a)$ with $A_c$). Subsequentially, we choose an irreducible representation $S^{\lambda^a}$ of each symmetric group $\Sym\left(\delta^{-1}(a)\right)$, so that $\lambda^a$ is a partition of $\left\lvert \delta^{-1}(a)\right\rvert$, we set
\begin{equation}\label{irredrhoc}
S\coloneqq\bigotimes_{c\in V_\mathcal{C}}\bigotimes_{a\in A_c}S^{\lambda^a}
\end{equation}
and we denote by $\overline{S}$ the inflation of $S$ from $\prod_{c\in V_\mathcal{C}}\prod_{a\in A_c}\Sym\left(\delta^{-1}(a)\right)$ to $I$.

Now we may construct a tree of partitions $(\Lambda,\mathcal{Q})$ in the obvious way: $V_\mathcal{Q}\setminus W_\mathcal{Q}\coloneqq \sqcup_{c\in V_\mathcal{C}}A_c$, $W_\mathcal{Q}$ is made up of one vertex if $W_\mathcal{T}\neq \emptyset$, otherwise it is empty, the label of $a\in V_\mathcal{Q}\setminus W_\mathcal{Q}$
is $\lambda^a$, the label of the unique vertex in $W_\mathcal{Q}$ is $\mu$. Finally, to each $a\in V_\mathcal{Q}\setminus W_\mathcal{Q}$ we attach a copy of the tree $\mathcal{Q}^{(v_a)}$ with its labeling $\Lambda^{(v_a)}$. In the following picture, $V_\mathcal{Q}\setminus W_\mathcal{Q}=\{a,a',\dotsc,a''\}$.

\begin{picture}(450,85)
\put(170,75){${\scriptstyle \mathcal{Q}}$}
\put(200,70){\circle*{5}}
\put(199,75){${\scriptstyle \emptyset}$}

\thicklines
\drawline(200,70)(100,30)
\put(100,30){\circle*{3}}
\put(90,30){${\scriptstyle \lambda^{a}}$}

\drawline(200,70)(160,30)
\put(160,30){\circle*{3}}
\put(146,30){${\scriptstyle \lambda^{a'}}$}

\put(205,30){\circle*{1}}
\put(215,30){\circle*{1}}
\put(225,30){\circle*{1}}

\drawline(200,70)(240,30)
\put(240,30){\circle*{3}}
\put(245,30){${\scriptstyle \lambda^{a''}}$}

\drawline(200,70)(300,30)
\put(300,30){\circle*{3}}
\put(305,30){${\scriptstyle \mu}$}

\thinlines
\put(100,30){\line(-1,-1){15}}
\put(100,30){\line(1,-1){15}}
\put(90,10){${\scriptstyle \mathcal{Q}^{(v_a)}}$}

\put(160,30){\line(-1,-1){15}}
\put(160,30){\line(1,-1){15}}
\put(145,10){${\scriptstyle \mathcal{Q}^{(v_{a'})}}$}

\put(240,30){\line(-1,-1){15}}
\put(240,30){\line(1,-1){15}}
\put(225,10){${\scriptstyle \mathcal{Q}^{(v_{a''})}}$}
\end{picture}

\noindent
The corresponding realization $\delta$ of $(\Lambda,\mathcal{Q})$ has been previously defined on $V_\mathcal{T}$; by the inductive hypothesis, there are a realizations $\delta_u\colon\mathcal{T}_u\rightarrow\mathcal{Q}_a$, $a\in A_c$, $u\in\delta^{-1}(a)$ and it is obvious that we get a $\delta$ that satisfies the conditions in Corollary \ref{CortreepartT}\eqref{CortreepartTa}. Moreover, from Corollary \ref{CortreepartT} we deduce the existence of a tree composition $\varphi$ of $\mathcal{T}$ such that $(\Lambda,\mathcal{Q})$ is the associated tree of partitions and $\varphi$ {\em is associated to the same realization } $\delta$.

By the inductive hypothesis, $Z^{\Lambda^{(u)}}\subseteq M^{\lvert\varphi_u^{-1}\rvert}$ and we may construct the extension $W^\varphi$ as above (cf. \eqref{ResIKW}). We denote by $\widetilde{Z}$ the extension of $Z$ from $N$ to $I$ {\em constructed by means of } \eqref{exttensprod}; this ensures that also $\widetilde{Z}\subseteq W^\varphi$. Finally,
\begin{equation}\label{irredClifford}
Z^\Lambda\coloneqq\Ind_I^{\text{\rm Aut}(\mathcal{T})}\left(\widetilde{Z}\otimes\overline{S}\right)
\end{equation}
is an irreducible representation of $\Aut(\mathcal{T})$ and each of its irreducible representations is of this form. This is the $Z^\Lambda$ in the statement.

Now we show that the correspondence $\Aut(\mathcal{T})\longrightarrow\mathsf{Par}(\mathcal{T})$ is well defined: every irreducible representation $\Ind_I^{\text{\rm Aut}(\mathcal{T})}\left(\widetilde{Z'}\otimes\overline{S'}\right)$ equivalent to \eqref{irredClifford} gives rise to the same tree of partitions, just with a possibly different realization $\delta'$. Indeed, from Clifford theory we deduce that if they are equivalent then $\widetilde{Z'}$ is an extension from $N$ to $I'$ of a representation of the form
$Z'=\bigotimes_{u\in V_\mathcal{T}}Z^{\Xi^{(u)}}$, where $I'$ is the inertia group of $Z'$. It follows that 
 $Z'$ is obtained from $Z$ by means of a $\pi\in\prod_{c\in V_\mathcal{C}}\Sym\left(\tv_\mathcal{T}^{-1}(c)\right)$ which permutes the factors of the tensor product, that is in \eqref{irredsigcbis} and in \eqref{irredsigc} the set $\delta^{-1}(a)$ is replaced by $\pi\left(\delta^{-1}(a)\right)$ and there exist corresponding realizations $\delta'_{\pi(u)}$. Moreover, $S'$ is defined as in \eqref{irredrhoc}, with the same $\lambda^a$'s but replacing the group with $\prod_{c\in V_\mathcal{C}}\prod_{a\in A_c}\Sym\left(\pi\left(\delta^{-1}(a)\right)\right)$, and $\overline{S'}$ is its inflation to $I'$. By the inductive hypothesis and \eqref{Rtauomeg2}, there exist isomorphisms $g_u\colon\mathcal{T}_u\rightarrow\mathcal{T}_{\pi(u)}$ such that $\delta_u=\delta'_{\pi(u)}\circ g_u$; by setting $g\rvert_{V_\mathcal{T}}\coloneqq \pi$ and $\delta'\rvert_{V_\mathcal{T}}\coloneqq \delta\circ \pi^{-1}$ we complete the definition of $g$ and $\delta'$, so that 
\[
\left.\left(\delta'\circ g\right)\right\rvert_{V_\mathcal{T}}=\delta\circ \pi^{-1}\circ\pi=\delta=\delta\rvert_{V_\mathcal{T}}
\]
and finally $(\delta'\circ g)_u=\delta'_{\pi(u)}\circ g_u=\delta_u$. That is, $\delta=\delta'\circ g$.

The map $\Aut(\mathcal{T})\longrightarrow\mathsf{Par}(\mathcal{T})$ is clearly a bijection: given $(\Lambda,\mathcal{Q})$, with a realization $\delta$, we may easily reverse the construction above getting the unique irreducible representation $Z^\Lambda$ whose image is $(\Lambda,\mathcal{Q})$. Moreover, from $\widetilde{Z}\subseteq W^\varphi$ (with multiplicity one) and $\overline{S}\subseteq \overline{\bigotimes}_{a\in V_\mathcal{Q}}M^{\lambda^a}$ (with multiplicity one) it follows that $Z^\Lambda\subseteq M^{\lvert\varphi^{-1}\rvert}$, with multiplicity one; cf. \eqref{MvarphiCli} and \eqref{irredClifford}.
\end{proof}

Note that $Z^\lambda$ is effectively a subspace of $M^{\lvert\varphi^{-1}\rvert}$, it is not only isomorphic to a subpace of the permutation representation: we say that $Z^\lambda$ {\em is constructed by means of the realization of } $\varphi$. By Theoren \ref{Theoorbits}\eqref{Theoorbitsb}, this notion does not depend on $\varphi$ but only on its type $\lvert\varphi^{-1}\rvert$.

\section{Common refinements of tree compositions}\label{Secrefinements}

If $V=\bigsqcup_{i=1}^nA_i=\bigsqcup_{j=1}^mB_j$ are two compositions of $V$ their intersection composition is given by $V=\bigsqcup_{A_i\cap B_j\neq \emptyset}(A_i\cap B_j)$ and every refinement of them is also a refinement of their intersection composition. We describe these elementary facts by means of surjective maps as a reference for the case of tree compositions.
\begin{example}\label{excomrefh1}
{\rm Suppose that $V,A,X,Y$ are sets and that we have the following commutative diagram of surjective maps:
\[
\xymatrix@R=15pt{
&X\\
V\ar[r]^{\delta}\ar[ur]^{\varphi}\ar[dr]_{\psi}&A\ar[u]_{\vartheta}\ar[d]^{\tau}\\
&Y
}
\]
Then
\begin{equation}\label{starp18}
V=\underbrace{\bigsqcup_{a\in A}\delta^{-1}(a)}_{(*)}=\underbrace{\bigsqcup_{x\in X}\varphi^{-1}(x)}_{(**)}=\underbrace{\bigsqcup_{y\in Y}\psi^{-1}(y)}_{(***)}
\end{equation} 
are compositions of $V$; cf. Example \ref{exampleh1}. Moreover, for all $x\in X, y\in Y$,
\begin{equation}\label{quadp19}
\varphi^{-1}(x)\cap\psi^{-1}(y)=\delta^{-1}(\theta^{-1}(x))\cap\delta^{-1}(\tau^{-1}(y))
=\bigsqcup_{a\in \theta^{-1}(x)\cap\tau^{-1}(y)}\delta^{-1}(a).
\end{equation} 
We set $A'\coloneqq\{(x,y)\in X\times Y\colon\varphi^{-1}(x)\cap\psi^{-1}(y)\neq\emptyset\}$
and we define the map $\rho\colon V\rightarrow A'$ by setting, for all $v\in V$,
\begin{equation}\label{stellap18varep}
\rho(v)=(x,y)\quad\text{ if }\quad v\in\varphi^{-1}(x)\cap\psi^{-1}(y),
\end{equation}
and also $\vartheta'\colon A'\rightarrow X$ by setting, for all $(x,y)\in A'$, $\vartheta'(x,y)=x$, $\tau'\colon A'\rightarrow Y$ by setting, $\tau'(x,y)=y$, and finally $\gamma\colon A\rightarrow A'$ by setting $\gamma(a)=(x,y)$ if $a\in\theta^{-1}(x)\cap\tau^{-1}(y)$. 
Then
\begin{equation}\label{intpart}
V=\bigsqcup_{(x,y)\in A'}\left[\varphi^{-1}(x)\cap\psi^{-1}(y)\right]\equiv \bigsqcup_{(x,y)\in A'}\rho^{-1}(x,y)
\end{equation}
is the {\em intersection composition} of $(**)$ and $(***)$ in \eqref{starp18}. From \eqref{quadp19} we get:
$V=\bigsqcup_{(x,y)\in A'}\;\bigsqcup_{a\in \gamma^{-1}(x,y)}\delta^{-1}(a)$,
which is a refinement of \eqref{intpart} and coincides with $(*)$ in \eqref{starp18}, because $A=\bigsqcup_{(x,y)\in A'}\gamma^{-1}(x,y)$.
We have illustrated the obvious fact that every common refinement of $(**)$ and $(***)$ in \eqref{starp18} is also a refinement of \eqref{intpart}.
}
\end{example}

\begin{definition}\label{defcommref}
{\rm 
\begin{enumerate}
\item
Given a tree composition $\varphi\in\mathsf{C}(\mathcal{T},\mathcal{P})$
a {\em refinement} of $\varphi$ is another tree composition $\rho\in\mathsf{C}(\mathcal{P},\mathcal{R})$ with a tree composition $\vartheta\in\mathsf{C}(\mathcal{R},\mathcal{P})$ such that the following diagram is commutative:
\[\xymatrix@R=5pt{
\mathcal{T}\ar[dr]^{\varphi}\ar[dd]_{\rho} & \\
&\mathcal{P}\\
\mathcal{R}\ar[ur]_{\vartheta}&}
\]
We also say that $\varphi$ is {\em coarser} than $\rho$.

\item
Given two tree compositions $\varphi\in\mathsf{C}(\mathcal{T},\mathcal{P})$ and $\psi\in\mathsf{C}(\mathcal{T},\mathcal{M})$, of the same tree $\mathcal{T}$, a {\em common refinement} of the couple $(\varphi,\psi)$ is a tree composition $\rho\in\mathsf{C}(\mathcal{T},\mathcal{R})$ together with two tree compositions $\vartheta\in\mathsf{C}(\mathcal{R},\mathcal{P})$ and $\tau\in\mathsf{C}(\mathcal{R},\mathcal{M})$ such that the following diagram is commutative:
\begin{equation}\label{diagdefcommref}
\xymatrix@R=15pt{
&\mathcal{P}\\
\mathcal{T}\ar[r]^{\rho}\ar[ur]^{\varphi}\ar[dr]_{\psi}&\mathcal{R}\ar[u]_{\vartheta}\ar[d]^{\tau}\\
&\mathcal{M}
}
\end{equation}
\end{enumerate}
}
\end{definition}

A common refinement is {\em trivial} when $\mathcal{R}\equiv\mathcal{T}$ and $\rho\in\Aut(\mathcal{T})$; in particular, common refinements always exist. Given two couples of tree compositions
\begin{equation}\label{couplestreecomp}\xymatrix@R=.5pt{
&\mathcal{P}&&&&\mathcal{P}\\
\mathcal{T}\ar[ur]^{\varphi}\ar[dr]_{\psi}&&&&\mathcal{T}'\ar[ur]^{\varphi'}\ar[dr]_{\psi'}&&\\
&\mathcal{M}&&&&\mathcal{M}
}
\end{equation} 
the common refinements $\rho\in\mathsf{C}(\mathcal{T},\mathcal{R})$ (of $(\varphi,\psi)$) and $\rho'\in\mathsf{C}(\mathcal{T}',\mathcal{R}')$ (of $(\varphi',\psi')$) are {\em equivalent} if there exist tree isomorphisms
$\zeta\colon\mathcal{T}\rightarrow\mathcal{T}'$ and $\gamma\colon\mathcal{R}\rightarrow\mathcal{R}'$ such that the following diagram is commutative:
\begin{equation}\label{diagepsilon2}
\xymatrix@R=5pt{
&&\mathcal{P}&&\\
&&&&\\
\mathcal{T}\ar[dd]_{\zeta}\ar[uurr]^{\varphi}\ar[rrrr]_{\rho}\ar[ddddrr]^(.8){\psi}&&&&\mathcal{R}\ar[dd]^{\gamma}\ar[uull]_{\vartheta}\ar[ddddll]_(.8){\tau}\\
&&&&\\
\mathcal{T}'\ar[ddrr]_{\psi'}\ar[rrrr]^{\rho'}\ar[uuuurr]_(.8){\varphi'}&&&&\mathcal{R}'\ar[ddll]^{\tau'}\ar[uuuull]^(.8){\vartheta'}\\
&&&&\\
&&\mathcal{M}&&
}
\end{equation}
If $\mathcal{R}=\mathcal{R}'$, $\vartheta=\vartheta'$, $\tau=\tau'$ and $\gamma$ is the identity of $\Aut(\mathcal{R})$ we say that they are {\em strictly equivalent}. Clearly, if $\rho$ and $\rho'$ are equivalent as in \eqref{diagepsilon2} then $\gamma\circ\rho$ and $\rho'$ are strictly equivalent.

The couples $(\varphi,\psi)$ and $(\varphi',\psi')$ in \eqref{couplestreecomp} are 
{\em strictly equivalent} if there exists an isomorphism $\zeta\colon\mathcal{T}\rightarrow\mathcal{T}'$ such that the following diagram is commutative:
\begin{equation}\label{defstreqphipsi}\xymatrix@R=15pt{
&\mathcal{P}\\
\mathcal{T}\ar[r]^{\zeta}\ar[ur]^{\varphi}\ar[dr]_{\psi}&\mathcal{T}'\ar[u]_{\varphi'}\ar[d]^{\psi'}\\
&\mathcal{M}
}
\end{equation}
\begin{proposition}\label{Propos4p15}
The couples in \eqref{couplestreecomp} are strictly equivalent 
if and only if every common refinement of one couple is equivalent to a common refinement of the other couple, as in \eqref{diagepsilon2}. Moreover, if this is the case, then we may take the same $\zeta$ in \eqref{defstreqphipsi} and in \eqref{diagepsilon2}.
\end{proposition}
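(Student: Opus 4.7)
My plan is to prove both implications by a short diagram chase, exploiting the fact that composing a tree composition on either side with a rooted tree isomorphism yields another tree composition, together with the observation that $\bigl(\id_\mathcal{T},\varphi,\psi\bigr)$ is always a (trivial) common refinement of $(\varphi,\psi)$.

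For the forward direction, I assume the two couples are strictly equivalent via an isomorphism $\zeta\colon \mathcal{T}\to\mathcal{T}'$ as in \eqref{defstreqphipsi}, so that $\varphi=\varphi'\circ\zeta$ and $\psi=\psi'\circ\zeta$. Given an arbitrary common refinement $\varepsilon\in\mathsf{C}(\mathcal{T},\mathcal{R})$ of $(\varphi,\psi)$ with attendant maps $\vartheta\in\mathsf{C}(\mathcal{R},\mathcal{P})$ and $\tau\in\mathsf{C}(\mathcal{R},\mathcal{M})$, I propose to set $\mathcal{R}'\coloneqq \mathcal{R}$, $\vartheta'\coloneqq \vartheta$, $\tau'\coloneqq \tau$, $\gamma\coloneqq \id_\mathcal{R}$ and $\varepsilon'\coloneqq \varepsilon\circ\zeta^{-1}$. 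Then $\varepsilon'$ is a tree composition, and
\[
\vartheta'\circ\varepsilon'=\vartheta\circ\varepsilon\circ\zeta^{-1}=\varphi\circ\zeta^{-1}=\varphi',\qquad \tau'\circ\varepsilon'=\psi',
\]
so $\varepsilon'$ is a common refinement of $(\varphi',\psi')$. The further identities $\gamma\circ\varepsilon=\varepsilon=\varepsilon'\circ\zeta$, $\vartheta'\circ\gamma=\vartheta$ and $\tau'\circ\gamma=\tau$ are either trivial or immediate, giving the commutativity of the diagram \eqref{diagepsilon2} with this choice of $(\zeta,\gamma)$.

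For the backward direction, I apply the hypothesis to the trivial common refinement of $(\varphi,\psi)$ obtained by taking $\mathcal{R}\coloneqq \mathcal{T}$, $\varepsilon\coloneqq \id_\mathcal{T}$, $\vartheta\coloneqq \varphi$ and $\tau\coloneqq \psi$. By assumption there exist a common refinement $\varepsilon'\in\mathsf{C}(\mathcal{T}',\mathcal{R}')$ of $(\varphi',\psi')$ together with isomorphisms $\zeta\colon \mathcal{T}\to\mathcal{T}'$ and $\gamma\colon \mathcal{T}\to\mathcal{R}'$ making \eqref{diagepsilon2} commute. Comparing the two paths from $\mathcal{T}$ to $\mathcal{P}$ in that diagram yields
\[
\varphi=\vartheta\circ\varepsilon=\vartheta'\circ\gamma=\vartheta'\circ\varepsilon'\circ\zeta=\varphi'\circ\zeta,
\]
and a parallel computation with $\mathcal{M}$ in place of $\mathcal{P}$ gives $\psi=\psi'\circ\zeta$; this is precisely the strict equivalence \eqref{defstreqphipsi} of the two couples, realised by the very same $\zeta$ appearing in \eqref{diagepsilon2}, which establishes the ``moreover'' clause.

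The only substantive point to check is that $\varepsilon\circ\zeta^{-1}$ really is a tree composition in the forward direction; but since $\zeta^{-1}$ is a rooted tree isomorphism, the pseudogroup of equivalences of $\varepsilon$ in the sense of Definition \ref{Defpseudo} transports verbatim by conjugation, and the surjectivity conditions of Definition \ref{deftreecomp} transfer without change. Apart from this, no real obstacle arises: the proposition is essentially a tautology, because the commutativity hypothesis in the definition \eqref{diagepsilon2} of equivalence of common refinements already packages the strict equivalence of the underlying couples inside the equalities $\varphi=\vartheta\circ\varepsilon$, $\varphi'=\vartheta'\circ\varepsilon'$ and $\varepsilon'\circ\zeta=\gamma\circ\varepsilon$.
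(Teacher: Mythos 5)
Your proof is correct and follows essentially the same route as the paper: the forward direction (transporting a common refinement along $\zeta^{-1}$ with $\mathcal{R}'=\mathcal{R}$, $\gamma=\id$) matches the paper verbatim, and your backward direction merely spells out, via the trivial refinement, what the paper compresses into the remark that \eqref{defstreqphipsi} is a subdiagram of \eqref{diagepsilon2}, since the commutativity of \eqref{diagepsilon2} already contains $\varphi'\circ\zeta=\varphi$ and $\psi'\circ\zeta=\psi$ directly.
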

\begin{proof}
For the if part, just note that the diagram \eqref{defstreqphipsi} is part of \eqref{diagepsilon2}. For the only if part, assume the existence of $\zeta$ and the commutativity of \eqref{defstreqphipsi}. Take $\rho\in\mathsf{C}(\mathcal{T},\mathcal{R})$, a common refinement of $\varphi$ and $\psi$ as in \eqref{diagdefcommref}. Define $\rho'\colon\mathcal{T}'\rightarrow\mathcal{R}$ by setting $\rho'\coloneqq\rho\circ \zeta^{-1}$, take $\mathcal{R}'=\mathcal{R}$, $\gamma=$ the identity and set $\vartheta'\coloneqq\vartheta$, $\tau'\coloneqq\tau$. Then
\[
\vartheta'\circ\rho'=\vartheta\circ\rho\circ\zeta^{-1}=\varphi\circ \zeta^{-1}=\varphi'\qquad\text{ and }\qquad\tau'\circ\rho'=\tau\circ\rho\circ\zeta^{-1}=\psi\circ\zeta^{-1}=\psi'
\]
and we have two equivalent common refinements.
\end{proof}

\begin{remark}\label{remcommref}
{\rm
Suppose that we have a common refinement as in \eqref{diagdefcommref} and that $w,v\in\mathcal{T}$ are distinct vertices such that 
$\rho(v)=\rho(w)\eqqcolon a$. Then we have the following strict equivalence of common refinements:
\[
\xymatrix@R=5pt{
&&\mathcal{P}_{\vartheta(a)}&\\
\mathcal{T}_v\ar[dd]_{\zeta}\ar[urr]^{\varphi_v}\ar[drrr]^(.7){\rho_v}\ar[dddrr]^(.8){\psi_v}&&&\\
&&&\mathcal{R}_a\ar[uul]_{\vartheta_a}\ar[ddl]^{\tau_a}\\
\mathcal{T}_w\ar[drr]_{\psi_w}\ar[urrr]_(.7){\rho_w}\ar[uuurr]_(.8){\varphi_w}&&&\\
&&\mathcal{M}_{\tau(a)}&
}
\]
Indeed, we can use the isomorphism $\zeta\colon\mathcal{T}_v\rightarrow\mathcal{T}_w$ given by the tree composition $\rho$ (cf. the regularity condition in Lemma \ref{lemmaregcond}). In particular, both the couples $(\varphi_v,\psi_v)$ and $(\varphi_w,\psi_w)$ and the common refinements $\rho_v$ and $\rho_w$ are strictly equivalent.
}
\end{remark}

\begin{remark}\label{01Remark}{\rm
In the notation of Example \ref{excomrefh1}, suppose that $\varphi'\colon V\rightarrow X$, $\psi'\colon V\rightarrow Y$ is another couple of surjective maps and set 
$A''\coloneqq\{(x,y)\in X\times Y\colon (\varphi')^{-1}(x)\cap(\psi')^{-1}(y)\neq\emptyset\}$.
Then $(\varphi,\psi)$ and $(\varphi',\psi')$ are strictly equivalent (cf. \eqref{defstreqphipsi}) if and only if 
\begin{equation}\label{intmatV}
\left\lvert\varphi^{-1}(x)\cap\psi^{-1}(y)\right\rvert=\left\lvert(\varphi')^{-1}(x)\cap(\psi')^{-1}(y)\right\rvert,\quad \text{for all }(x,y)\in X\times Y. 
\end{equation}
Indeed, if this is the case, we can define $\pi\in\Sym(V)$ in such a way that 
\begin{equation}\label{intmatVbis}
\pi\left[\varphi^{-1}(x)\cap\psi^{-1}(y)\right]=(\varphi')^{-1}(x)\cap(\psi')^{-1}(y),
\end{equation}
for all $(x,y)\in A'\equiv A''$. Moreover, the common refinements of $(\varphi,\psi)$ and $(\varphi',\psi')$ are strictly equivalent: the latter is defined by setting
$\delta(v)=(x,y)$, for all $v\in (\varphi')^{-1}(x)\cap(\psi')^{-1}(y)$ and the following diagram is commutative:  
\[
\xymatrix@R=5pt{
&&X&\\
V\ar[dd]_{\pi}\ar[drrr]^(.7){\rho}\ar[urr]^{\varphi}\ar[dddrr]^(.8){\psi}&&&\\
&&&A'\ar[uul]_{\vartheta'}\ar[ddl]^{\tau'}\\
V\ar[urrr]_(.7){\delta}\ar[drr]_{\psi'}\ar[uuurr]_(.8){\varphi'}&&&\\
&&Y&
}
\]
}
\end{remark}

\begin{theorem}\label{Propcoarsercommref}
Given two tree compositions $\varphi\in\mathsf{C}(\mathcal{T},\mathcal{P})$ and $\psi\in\mathsf{C}(\mathcal{T},\mathcal{M})$ there exists a common refinement as in \eqref{diagdefcommref} such that the following {\em coarseness condition} is verified: if $a,b\in\mathcal{R}$ are distinct siblings, $\vartheta(a)=\vartheta(b)\eqqcolon x$, $\tau(a)=\tau(b)\eqqcolon y$, $u,v\in\mathcal{T}$ and
\begin{equation}\label{squarep17}
\rho(u)=a,\qquad\qquad\qquad\rho(v)=b,
\end{equation}
then the common refinements 
\[
\xymatrix@R=15pt{
&\mathcal{P}_x&&&&\mathcal{P}_x\\
\mathcal{T}_u\ar[r]^{\rho_u}\ar[ur]^{\varphi_u}\ar[dr]_{\psi_u}&\mathcal{R}_a\ar[u]_{\vartheta_a}\ar[d]^{\tau_a}&&&\mathcal{T}_v\ar[r]^{\rho_v}\ar[ur]^{\varphi_v}\ar[dr]_{\psi_v}&\mathcal{R}_b\ar[u]_{\vartheta_b}\ar[d]^{\tau_b}&\\
&\mathcal{M}_y&&&&\mathcal{M}_y
}
\]
are {\em not} equivalent (cf. \eqref{diagepsilon2} and Remark \ref{remcommref}); if $a,b$ are leaves this must be interpreted in the following way: it is impossible that both $\vartheta(a)=\vartheta(b)$ and $\tau(a)=\tau(b)$. Moreover, if 
\begin{equation}\label{arbcommref}
\xymatrix@R=15pt{
&\mathcal{P}\\
\mathcal{T}\ar[r]^{\delta}\ar[ur]^{\varphi}\ar[dr]_{\psi}&\mathcal{U}\ar[u]_{\eta}\ar[d]^{\omega}\\
&\mathcal{M}
}
\end{equation}
is an arbitrary common refinement of $(\varphi,\psi)$ then there exists a tree composition $\gamma\in\mathsf{C}(\mathcal{U},\mathcal{R})$ such that the following diagram is commutative:
\begin{equation}\label{losangap20}\xymatrix@R=30pt{
&&&&\mathcal{P}\\
\mathcal{T}\ar@/_1pc/[rrrr]_(.55){\rho}\ar[rr]^(.7){\delta}\ar[urrrr]^{\varphi}\ar[drrrr]_{\psi}&&\mathcal{U}\ar[rr]^{\gamma}\ar[urr]_(.7){\eta}\ar[drr]^(.7){\omega}&&\mathcal{R}\ar[u]_{\vartheta}\ar[d]^{\tau}\\
&&&&\mathcal{M}
} 
\end{equation}
In particular, the common refinement $\mathcal{R}$ satisfying the coarseness condition is unique up to equivalences (cf. \eqref{diagepsilon2}).
\end{theorem}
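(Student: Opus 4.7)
The plan is to argue by induction on $h \coloneqq H(\mathcal{T})$.

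For the base case $h = 1$: identify $\mathcal{T}, \mathcal{P}, \mathcal{M}$ with their first-level vertex sets and apply Example \ref{excomrefh1} verbatim, taking $V_\mathcal{R} = A' \coloneqq \{(x,y) \in V_\mathcal{P} \times V_\mathcal{M} : \varphi^{-1}(x) \cap \psi^{-1}(y) \neq \emptyset\}$ with $\varepsilon, \vartheta, \tau$ as in \eqref{VAAdelta}. The coarseness condition is automatic since distinct elements of $A'$ disagree in some component, and the factoring property is exactly the content of \eqref{VAAdelta}.

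For the inductive step $h \geq 2$, for each internal first-level vertex $u \in V_\mathcal{T} \setminus W_\mathcal{T}$ the tree $\mathcal{T}_u$ has height at most $h-1$, so the inductive hypothesis provides a coarsest common refinement $\varepsilon^{(u)} \in \mathsf{C}(\mathcal{T}_u, \mathcal{R}^{(u)})$ of $(\varphi_u, \psi_u)$ with projections $\vartheta^{(u)}, \tau^{(u)}$. Introduce an equivalence relation on $V_\mathcal{T}$ by declaring $u \sim v$ iff $\varphi(u) = \varphi(v)$, $\psi(u) = \psi(v)$, and moreover, when $u, v$ are internal, the couples $(\varphi_u, \psi_u), (\varphi_v, \psi_v)$ are strictly equivalent as in \eqref{defstreqphipsi}; by Proposition \ref{Propos4p15} combined with the coarsest property, strict equivalence of the couples amounts to equivalence of $\varepsilon^{(u)}$ with $\varepsilon^{(v)}$. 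Set $V_\mathcal{R} \coloneqq V_\mathcal{T}/{\sim}$, choose for each internal class $a$ a representative $u_a$, and attach $\mathcal{R}^{(u_a)}$ below $a$ to produce $\mathcal{R}$ via the first-level decomposition \eqref{firstld}. Define $\vartheta$ by $\vartheta([u]) = \varphi(u)$ at the first level and by $\vartheta^{(u_a)}$ below, and analogously for $\tau$. To define $\varepsilon$, fix for every $u \sim u_a$ a strict equivalence $\zeta_{u,u_a}: \mathcal{T}_u \to \mathcal{T}_{u_a}$ of the couples (Proposition \ref{Propos4p15}) and set $\varepsilon(u) \coloneqq [u]$, $\varepsilon_u \coloneqq \varepsilon^{(u_a)} \circ \zeta_{u,u_a}$. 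Then \eqref{diagdefcommref} commutes by construction, and $\varepsilon, \vartheta, \tau$ are tree compositions because $\varepsilon(u) = \varepsilon(v)$ forces $u \sim v$, so $\varepsilon_u$ and $\varepsilon_v$ differ by $\zeta_{v,u_a}^{-1} \circ \zeta_{u,u_a}$. The coarseness condition at the first level follows from the definition of $\sim$ together with Proposition \ref{Propos4p15}, and at deeper levels from the inductive coarseness of each $\mathcal{R}^{(u_a)}$.

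For the factoring property \eqref{losangap20}, given an arbitrary common refinement $(\delta, \mathcal{U}, \eta, \omega)$ as in \eqref{arbcommref}, I construct $\gamma \in \mathsf{C}(\mathcal{U}, \mathcal{R})$ level by level. At the first level send $\delta(u) \in V_\mathcal{U}$ to $[u] \in V_\mathcal{R}$; this is well-defined because $\delta(u) = \delta(u')$ yields, via the regularity isomorphism $\xi: \mathcal{T}_u \to \mathcal{T}_{u'}$ of $\delta$, that $\varphi_{u'} \circ \xi = \varphi_u$ and $\psi_{u'} \circ \xi = \psi_u$ (obtained by post-composing $\delta_{u'} \circ \xi = \delta_u$ with $\eta$ and $\omega$), so $u \sim u'$. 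Below the first level, for $z = \delta(u)$ the restriction $\delta_u : \mathcal{T}_u \to \mathcal{U}_z$ is a common refinement of $(\varphi_u, \psi_u)$, hence factors through $\varepsilon^{(u)}$ by the inductive universal property, yielding $\gamma_z: \mathcal{U}_z \to \mathcal{R}^{(u)}$. Uniqueness of $\mathcal{R}$ up to equivalence then follows by the standard argument: two coarsest common refinements factor through each other via tree compositions, whose compositions are then forced to be isomorphisms since any further nontrivial collapsing would contradict coarseness. The main obstacle is the bookkeeping needed to ensure that the independent choices of representatives $u_a$ and strict equivalences $\zeta_{u,u_a}$ combine coherently, so that $\varepsilon$ (respectively $\gamma$) is a genuine tree composition and is independent, up to equivalence, of those choices; this relies crucially on Proposition \ref{Propos4p15} and on the inductive coarseness of the $\mathcal{R}^{(u_a)}$'s.
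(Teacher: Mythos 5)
Your proof is correct and follows essentially the same strategy as the paper: induction on the height, introducing the same equivalence relation at the first level (you phrase it via strict equivalence of the couples $(\varphi_u,\psi_u)$, the paper via equivalence of the inductive coarsest common refinements $\varepsilon'_u$, but these coincide by the inductive hypothesis together with Proposition \ref{Propos4p15}), attaching the inductive refinements below the chosen representatives, and proving the factoring property level by level. The only cosmetic difference is that you treat $W_\mathcal{T}$ and $V_\mathcal{T}\setminus W_\mathcal{T}$ via a single equivalence relation, while the paper handles the leaf part separately; your concluding remark about bookkeeping matches the part of the paper's proof that carefully reduces the coarseness check for non-sibling $u,v$ to the sibling case via Remark \ref{remcommref}.
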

\begin{proof}
By induction on $H(\mathcal{T})$. If $H(\mathcal{T})=1$ we are in the setting of Example \ref{exampleh1} and Example \ref{excomrefh1}: we have $\mathcal{T}\equiv V$, $\mathcal{P}\equiv X$, $\mathcal{M}\equiv Y$ and we set $\mathcal{R}\coloneqq A'$, $\vartheta\coloneqq\vartheta'$, $\tau\coloneqq\tau'$, $\rho$ as in \eqref{stellap18varep} and \eqref{intpart}. 

Now suppose $H(\mathcal{T})\geq 2$. First of all, arguing for $W_\mathcal{T}$ as in the case $H(\mathcal{T})=1$, we get $W_\mathcal{R}$, $\theta\rvert_{W_\mathcal{R}}$, $\tau\rvert_{W_\mathcal{R}}$ and $\rho\rvert_{W_\mathcal{T}}$. Then, from the inductive hypothesis, we deduce that for each $v\in V_\mathcal{T}\setminus W_\mathcal{T}$ there exists a unique common refinement
$\rho'_v\colon\mathcal{T}_v\rightarrow\mathcal{R}_{\rho'(v)}$, with associated $\vartheta'_{\rho'(v)}\colon\mathcal{R}_{\rho'(v)}\rightarrow\mathcal{P}_{\varphi(v)}$ and $\tau'_{\rho'(v)}\colon\mathcal{R}_{\rho'(v)}\rightarrow\mathcal{M}_{\psi(v)}$,
that verifies the coarseness condition in the statement. Consider the following equivalence relation (a refinement of \eqref{intpart}): for  $u,v\in V_\mathcal{T}\setminus W_\mathcal{T}$ we say that $u\sim v$ if $\varphi(u)=\varphi(v)\eqqcolon x$, $\psi(u)=\psi(v)\eqqcolon y$ and the common refinements $\rho'_u$ and $\rho'_v$ are equivalent, as in the following commutative diagram (cf. \eqref{diagepsilon2}):

\begin{equation}\label{stepcpmmref}
\xymatrix@R=8pt{
&&&\mathcal{P}_x&&&\\
&&&&&&\\
\mathcal{T}_u\ar[dd]_{\zeta_{v,u}}\ar[uurrr]^{\varphi_u}\ar[rrrrrr]_{\rho'_u}\ar[ddddrrr]^(.8){\psi_u}&&&&&&\mathcal{R}_{\rho'(u)}\ar[dd]^{\gamma_{\rho'(u),\rho'(v)}}\ar[uulll]_{\vartheta'_{\rho'(u)}}\ar[ddddlll]_(.8){\tau'_{\rho'(u)}}\\
&&&&&&\\
\mathcal{T}_v\ar[ddrrr]_{\psi_v}\ar[rrrrrr]^{\rho'_v}\ar[uuuurrr]_(.8){\varphi_v}&&&&&&\mathcal{R}_{\rho'(v)}\ar[ddlll]^{\tau'_{\rho'(v)}}\ar[uuuulll]^(.8){\vartheta'_{\rho'(v)}}\\
&&&&&&\\
&&&\mathcal{M}_y&&&
}
\end{equation}
Then we complete the definition of the first level of $\mathcal{R}$ by setting $V_\mathcal{R}\setminus W_\mathcal{R}\coloneqq \left(V_\mathcal{T}\setminus W_\mathcal{T}\right)/\sim$. For each $a\in V_\mathcal{R}\setminus W_\mathcal{R}$, we choose a representative $u_a$ in the equivalence class corresponding to $a$ and we set $\mathcal{R}_a\coloneqq\mathcal{R}_{u_a}$. After that, for all $v$ in the equivalence class corresponding to $a$ we set $\rho(v)=a$, $\vartheta(v)=\varphi(v), \tau(v)=\psi(v)$ and then
\[
\begin{split}
&\rho_v\coloneqq\rho'_{u_a}\circ\zeta_{u_a,v}\equiv \gamma_{\rho'(u_a),\rho'(v)}\circ\rho'_v,\qquad(\text{by }\eqref{stepcpmmref})\\
&\theta_a\coloneqq\theta'_{\rho'(u_a)}\qquad\text{and}
\qquad\tau_a\coloneqq\tau'_{\rho'(u_a)}.
\end{split}
\]
From the commutativity of \eqref{stepcpmmref} it follows that 
\[
\theta_{\rho(v)}\circ\rho_v=\theta'_{\rho'(u_a)}\circ \gamma_{\rho'(u_a),\rho'(v)}\circ\rho'_v=\theta'_{\rho'(v)}\circ\rho'_v=\varphi_v
\]
and similarly $\tau_{\rho(v)}\circ\rho_v=\psi_v$.
Then we can use \eqref{firstldell} to define $\rho$, $\vartheta$ and $\tau$ and we get a common refinement.

The coarseness condition is clearly verified at the first level while, at lower levels, it follows from the inductive hypothesis when $u,v$ in \eqref{squarep17} are distinct siblings. Otherwise, we may take $v'$ sibling of $u$ such that $\rho(v)=\rho(v')=b$ and we can invoke Remark \ref{remcommref} to deduce that $\rho_v$ and $\rho_{v'}$ are strictly equivalent; but $\rho_u$ and $\rho_{v'}$ are {\em not} equivalent by the inductive hypothesis.

Finally, suppose that we have an arbitrary common refinement as in \eqref{arbcommref}. The construction in Example \ref{excomrefh1} applied to $W_\mathcal{T}$ yields the diagram \eqref{losangap20} restricted to the leaves at the first level. Then for each $u\in V_\mathcal{T}\setminus W_\mathcal{T}$ we may apply the inductive hypothesis to the common refinement $\delta_u\in\mathsf{C}\left(\mathcal{T}_u,\mathcal{U}_c\right)$ deducing that there exists $\gamma_c\in\mathsf{C}\left(\mathcal{U}_c,\mathcal{R}_{\rho(u)}\right)$ such that the following diagram, for the moment without $\mathcal{T}_v$ and its arrows, is commutative:

\begin{equation}\label{bigdiagram2}\xymatrix@R=15pt@C=50pt{
&&\mathcal{P}_{\varphi(u)}\\
\mathcal{T}_v\ar[dd]_{\zeta}\ar@/_-1.2pc/[urr]^{\varphi_v}\ar@/_1.2pc/[dddrr]_(.7){\psi_v}\ar[dr]^(.25){\delta_v}&&\\
&\mathcal{U}_c\ar[r]^{\gamma_c}\ar[uur]_(.5){\eta_c}\ar[ddr]^(.65){\omega_c}&\mathcal{R}_{\rho(u)}\ar[uu]_{\vartheta_a}\ar[dd]^{\tau_a}\\
\mathcal{T}_u\ar@/_1.2pc/[urr]_(.25){\rho_u}\ar[ur]_(.45){\delta_u}\ar@/^1.2pc/[uuurr]^(.7){\varphi_u}\ar@/_1.2pc/[drr]_{\psi_u}&&\\
&&\mathcal{M}_{\psi(u)}
}
\end{equation}
At the first level, we may construct $\gamma\colon V_\mathcal{U}\setminus W_\mathcal{U}\rightarrow V_\mathcal{R}\setminus W_\mathcal{R}$ (cf. \eqref{losangap20}) arguing as in Example \ref{excomrefh1}, taking into account Remark \ref{remcommref} and the equivalence relation $\sim$ introduced in the preceding part of the proof. Now we give the details. If $v\in V_\mathcal{T}\setminus W_\mathcal{T}$, $v\neq u$ and also $\delta(v)=c$ then the regularity condition in Definition \ref{deftreecomp} applied to $\delta$ yields an isomorphism $\zeta\colon\mathcal{T}_v\rightarrow\mathcal{T}_u$ such that $\delta_v=\delta_u\circ\zeta$. We have also $\varphi_v=\eta_c\circ\delta_v$ and $\psi_v=\omega_c\circ\delta_v$, by \eqref{diagdefcommref}. Then the whole diagram \eqref{bigdiagram2} is commutative: for instance 
\[
\varphi_v=\eta_c\circ\delta_v=\eta_c\circ\delta_u\circ\zeta=\varphi_u\circ\zeta.
\]
It follows that $\rho_u\circ\zeta\colon\mathcal{T}_v\rightarrow\mathcal{R}_{\rho(u)}$ yields a common refinement of $\varphi_v$ and $\psi_v$, which satisfies the coarseness condition because it is strictly equivalent to $\rho_u$. Therefore, the construction of $\mathcal{R}$ (that is the definition of $\sim$) forces $\rho(v)=\rho(u)$. In conclusion, we may set $\gamma(c)=\rho(u)$ and this ends the construction of $\gamma$ at the first level of $\mathcal{U}$ and also the proof.
\end{proof}

\begin{definition}\label{defcoarcomref}
{\rm
The tree composition $\rho\in\mathsf{C}(\mathcal{T},\mathcal{R})$ in Theorem \ref{Propcoarsercommref} will be called the {\em coarsest common refinement} of the couple $(\varphi,\psi)\in\mathsf{C}(\mathcal{T},\mathcal{P})\times \mathsf{C}(\mathcal{T},\mathcal{M})$. We say that the coarsest common refinement is {\em trivial} if $\rho$ is an isomorphism.}
\end{definition}

Clearly, for every internal $v\in\mathcal{T}$ the tree composition $\rho_v\colon\mathcal{T}_v\rightarrow\mathcal{R}_{\rho(v)}$, with the maps $\vartheta_{\rho(v)}$ and $\tau_{\rho(v)}$, is the coarsest common refinement of $(\varphi_v,\psi_v)$.

\begin{proposition}\label{Propos4p15bis}
Two couples of tree compositions as in \eqref{couplestreecomp} are strictly equivalent (cf. \eqref{defstreqphipsi})
if and only if their coarsest common refinements are equivalent, as in \eqref{diagepsilon2}. Moreover, if this is the case, then we may take the same isomorphism $\zeta$ in both equivalences.
\end{proposition}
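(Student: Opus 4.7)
The \textbf{if} direction is a direct arrow chase in the diagram \eqref{diagepsilon2} defining equivalence of common refinements: assuming the coarsest common refinements $\varepsilon\in\mathsf{C}(\mathcal{T},\mathcal{R})$ and $\varepsilon'\in\mathsf{C}(\mathcal{T}',\mathcal{R}')$ are equivalent via $(\zeta,\gamma)$, the outer triangles and the central square yield
\[
\varphi'\circ\zeta=\vartheta'\circ\varepsilon'\circ\zeta=\vartheta'\circ\gamma\circ\varepsilon=\vartheta\circ\varepsilon=\varphi,
\]
and symmetrically $\psi'\circ\zeta=\psi$, which is precisely the strict equivalence \eqref{defstreqphipsi} of the couples with the \emph{same} $\zeta$.

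For the converse, assume $\zeta\colon\mathcal{T}\to\mathcal{T}'$ realizes the strict equivalence \eqref{defstreqphipsi}. Transport $\varepsilon$ across $\zeta$ by setting $\widetilde{\varepsilon}'\coloneqq\varepsilon\circ\zeta^{-1}\in\mathsf{C}(\mathcal{T}',\mathcal{R})$. Since $\vartheta\circ\widetilde{\varepsilon}'=\vartheta\circ\varepsilon\circ\zeta^{-1}=\varphi\circ\zeta^{-1}=\varphi'$ and analogously $\tau\circ\widetilde{\varepsilon}'=\psi'$, this is a common refinement of $(\varphi',\psi')$ carrying the same downward maps $\vartheta,\tau$ as $\varepsilon$. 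The key step is to show $\widetilde{\varepsilon}'$ is itself coarsest: given any common refinement $\delta'\in\mathsf{C}(\mathcal{T}',\mathcal{U})$ of $(\varphi',\psi')$, the pullback $\delta'\circ\zeta\in\mathsf{C}(\mathcal{T},\mathcal{U})$ is a common refinement of $(\varphi,\psi)$, so by the universal property of $\varepsilon$ in Theorem \ref{Propcoarsercommref} there exists $\gamma_0\in\mathsf{C}(\mathcal{U},\mathcal{R})$ with $\varepsilon=\gamma_0\circ\delta'\circ\zeta$; post-composing with $\zeta^{-1}$ gives $\widetilde{\varepsilon}'=\gamma_0\circ\delta'$, exhibiting $\delta'$ as a refinement of $\widetilde{\varepsilon}'$.

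With $\widetilde{\varepsilon}'$ confirmed as a coarsest common refinement of $(\varphi',\psi')$, the uniqueness-up-to-equivalence clause of Theorem \ref{Propcoarsercommref} produces an isomorphism $\gamma\colon\mathcal{R}\to\mathcal{R}'$ (with identity on $\mathcal{T}'$) satisfying $\varepsilon'=\gamma\circ\widetilde{\varepsilon}'$, $\vartheta=\vartheta'\circ\gamma$, and $\tau=\tau'\circ\gamma$. Substituting $\widetilde{\varepsilon}'=\varepsilon\circ\zeta^{-1}$ yields $\varepsilon'\circ\zeta=\gamma\circ\varepsilon$, which combined with the preceding triangle identities furnishes exactly the commutative diagram \eqref{diagepsilon2} between $\varepsilon$ and $\varepsilon'$ using the original $\zeta$. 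I expect the only subtle point to be the bookkeeping of the downward maps $\vartheta,\tau$ attached to $\widetilde{\varepsilon}'$, but these are forced by surjectivity of $\varepsilon$; the rest is a routine exploitation of the universal property already established in Theorem \ref{Propcoarsercommref}.
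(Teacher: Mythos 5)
Your proof is correct and follows essentially the same approach as the paper: the paper's proof is the one-liner "specialize Proposition~\ref{Propos4p15} to the coarsest common refinements," and your argument — transporting $\varepsilon$ across $\zeta$ to get $\widetilde{\varepsilon}'=\varepsilon\circ\zeta^{-1}$, then invoking the universal property and uniqueness clause of Theorem~\ref{Propcoarsercommref} — is exactly the content that the word "specialize" compresses. You usefully make explicit the step the paper elides, namely verifying that the transported common refinement is itself coarsest so that uniqueness applies.
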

\begin{proof}
Specialize Proposition \ref{Propos4p15} to the case of the coarsest common refinements.
\end{proof}

\begin{lemma}\label{Propcoarreftriv}
The coarsest common refinement of tree compositions $\varphi\in \mathsf{C}(\mathcal{T},\mathcal{P})$ and $\psi\in\mathsf{C}(\mathcal{T},\mathcal{M})$ is trivial if and only if the following condition is verified: for all pair of distinct siblings $u,v\in\mathcal{T}$,
$\varphi(u)=\varphi(v)$ and $\psi(u)=\psi(v)$ imply that
the couples $(\varphi_u,\psi_u)$ and $(\varphi_v,\psi_v)$ are {\em not} strictly equivalent. If $u,v$ are leaves then this condition must interpreted in the following way: it is impossible that both $\varphi(u)=\varphi(v)$ and $\psi(u)=\psi(v)$.
\end{lemma}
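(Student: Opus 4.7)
My plan is to argue by induction on the height $h=H(\mathcal{T})$, reading off the recursive construction of the coarsest common refinement from the proof of Theorem \ref{Propcoarsercommref} and invoking Proposition \ref{Propos4p15bis} to convert equivalence of coarsest common refinements into strict equivalence of the couples.

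The base case $h=1$ is immediate from Example \ref{excomrefh1}: every vertex of $V_\mathcal{T}$ is a leaf, the coarsest common refinement is the map $\varepsilon\colon V_\mathcal{T}\to A'$ given by $\varepsilon(v)=(\varphi(v),\psi(v))$, and it is an isomorphism iff it is injective on $V_\mathcal{T}$, i.e.\ iff no two distinct leaves share both $\varphi$- and $\psi$-image. This is exactly the ``trivial'' reading of the stated condition when $u,v$ are leaves.

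For the inductive step I will use the description of $\varepsilon$ in the proof of Theorem \ref{Propcoarsercommref}: the first-level behaviour of $\varepsilon$ collapses $V_\mathcal{T}$ by the equivalence relation $\sim$ defined there, and for every internal $v\in V_\mathcal{T}\setminus W_\mathcal{T}$ the restriction $\varepsilon_v$ is strictly equivalent to the coarsest common refinement $\varepsilon'_v$ of $(\varphi_v,\psi_v)$. Therefore $\varepsilon$ is an isomorphism iff (i) $\varepsilon\rvert_{V_\mathcal{T}}$ is injective, and (ii) each $\varepsilon_v$ (equivalently, each $\varepsilon'_v$, since strict equivalence preserves being an isomorphism) is an isomorphism. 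The inductive hypothesis applied inside each $\mathcal{T}_v$ rewrites (ii) as the stated condition for pairs of distinct siblings lying at levels $\geq 2$. For (i), the construction tells us that two distinct leaves $u,v\in W_\mathcal{T}$ satisfy $\varepsilon(u)=\varepsilon(v)$ iff $\varphi(u)=\varphi(v)$ and $\psi(u)=\psi(v)$, while two distinct internal siblings $u,v\in V_\mathcal{T}\setminus W_\mathcal{T}$ satisfy $u\sim v$ iff $\varphi(u)=\varphi(v)$, $\psi(u)=\psi(v)$, and the coarsest common refinements $\varepsilon'_u,\varepsilon'_v$ are equivalent. At this last point I will invoke Proposition \ref{Propos4p15bis} to replace ``$\varepsilon'_u$ and $\varepsilon'_v$ equivalent'' by ``$(\varphi_u,\psi_u)$ and $(\varphi_v,\psi_v)$ strictly equivalent'', which gives precisely the stated condition for first-level sibling pairs.

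Combining (i) and (ii) yields the biconditional in both directions: if the condition holds, then $\varepsilon$ is injective at each level and $\varepsilon_v$ is an isomorphism in each first-level subtree, hence $\varepsilon$ is an isomorphism; conversely, if the condition fails at some pair of distinct siblings, either (i) or (ii) fails and $\varepsilon$ is not bijective. I do not foresee a substantial obstacle; the only care needed is the clean passage between $\varepsilon_v$ and $\varepsilon'_v$ (routine, via strict equivalence) and the correct use of Proposition \ref{Propos4p15bis} at the internal-sibling step of the first-level analysis.
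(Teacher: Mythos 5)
Your proof is correct, but it takes a genuinely different route from the paper's, and it is worth noting what each buys you. The paper does not reopen the induction that built the coarsest common refinement; instead it treats Theorem \ref{Propcoarsercommref} and Remark \ref{remcommref} as black boxes. For the ``if'' direction it cites Remark \ref{remcommref}: if $\varepsilon(u)=\varepsilon(v)$ for distinct siblings, that remark already forces $(\varphi_u,\psi_u)$ and $(\varphi_v,\psi_v)$ to be strictly equivalent; hence the hypothesis of the lemma rules out any collapse of siblings under $\varepsilon$, which (since $\varphi=\vartheta\circ\varepsilon$ and $\psi=\tau\circ\varepsilon$ force $\varepsilon(u)\neq\varepsilon(v)$ when $\varphi(u)\neq\varphi(v)$ or $\psi(u)\neq\psi(v)$) makes $\varepsilon$ injective on every sibling set and therefore an isomorphism. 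For the ``only if'' direction it uses the coarseness condition of Theorem \ref{Propcoarsercommref}, together with the observation stated right after Definition \ref{defcoarcomref} that $\varepsilon_v$ is the coarsest common refinement of $(\varphi_v,\psi_v)$, to pass from $\varepsilon(u)\neq\varepsilon(v)$ to ``$(\varphi_u,\psi_u)$ and $(\varphi_v,\psi_v)$ not strictly equivalent'' (via Proposition \ref{Propos4p15bis}). Your argument instead re-runs the induction on $h=H(\mathcal{T})$, unfolding the level-by-level construction of $\varepsilon$ from the proof of Theorem \ref{Propcoarsercommref}, splitting into first-level injectivity (handled with the relation $\sim$ and Proposition \ref{Propos4p15bis}) and isomorphy of each $\varepsilon_v$ (handled by the inductive hypothesis, since any pair of distinct siblings at level $\geq 2$ lives inside a single first-level subtree). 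This is sound, and the passage between $\varepsilon_v$ and $\varepsilon'_v$ is indeed harmless. The trade-off is that your proof duplicates work: the induction you carry out explicitly has already been done once inside Theorem \ref{Propcoarsercommref}, whose coarseness condition and the accompanying Remark \ref{remcommref} were designed precisely so that the present lemma could be read off directly, without reopening the recursion.
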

\begin{proof}
If the condition in the statement is verified then two distinct siblings cannot have the same image under a common refinement $\rho$ (cf. Remark \ref{remcommref}) and therefore $\rho$ must be injective. Conversely, if $\rho$ is injective and $u,v$ are distinct siblings as in the statement then $\rho(u)\neq\rho(v)$ so that, by Theorem \ref{Propcoarsercommref}, the couples $(\varphi_u,\psi_u)$ and $(\varphi_v,\psi_v)$ are not strictly equivalent. 
\end{proof}

\begin{lemma}\label{Lemmatrivaut}
Suppose that $\varphi\in\mathsf{C}(\mathcal{T},\mathcal{P})$ and $\psi\in\mathsf{C}(\mathcal{T},\mathcal{Q})$
are tree compositions of $\mathcal{T}$. Then, with the notation in \eqref{defgvaract}, the group of all $g\in\Aut(\mathcal{T})$ such that $g\varphi=\varphi$ and $g\psi=\psi$ is trivial if and only if their coarsest common refinement is trivial.
\end{lemma}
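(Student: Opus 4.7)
The plan is to proceed by induction on $h = H(\mathcal{T})$, with Lemma \ref{Propcoarreftriv} supplying the key combinatorial characterization of triviality. Recall that $g\varphi = \varphi$ (cf. \eqref{defgvaract}) is equivalent to $\varphi\circ g = \varphi$, i.e., $g$ preserves each fiber of $\varphi$, and similarly for $\psi$. The base case $h = 1$ is essentially the set-composition observation from Example \ref{exampleh1}: by the trivial interpretation of Lemma \ref{Propcoarreftriv} for leaves, $\varepsilon$ is trivial iff the map $V_\mathcal{T} \ni u \mapsto (\varphi(u),\psi(u))$ is injective, and the stabilizer in $\Sym(V_\mathcal{T})$ is trivial iff this same injectivity holds.

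For the inductive step, forward direction: assume $\varepsilon$ is trivial (an isomorphism) and take $g \in \Aut(\mathcal{T})$ with $g\varphi = \varphi$ and $g\psi = \psi$; I would first show $g$ fixes $V_\mathcal{T}$ pointwise. If $g(u) = v$ with $u \neq v$ in $V_\mathcal{T}$, then $u$ and $v$ are siblings at the first level with $\varphi(u) = \varphi(v)$ and $\psi(u) = \psi(v)$; since an automorphism preserves leaves, either both lie in $W_\mathcal{T}$ (excluded by the trivial interpretation in Lemma \ref{Propcoarreftriv}) or both are internal. In the internal case the restriction $g_u \colon \mathcal{T}_u \to \mathcal{T}_v$ defined by \eqref{firstldell} is a tree isomorphism, and the relations $g\varphi = \varphi$, $g\psi = \psi$ read on $\mathcal{T}_u$ give $\varphi_u = \varphi_v \circ g_u$ and $\psi_u = \psi_v \circ g_u$: exactly the strict equivalence of couples in the sense of \eqref{defstreqphipsi}, contradicting Lemma \ref{Propcoarreftriv}. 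Hence $g$ fixes $V_\mathcal{T}$, so for every internal $u \in V_\mathcal{T}$ the restriction $g_u \in \Aut(\mathcal{T}_u)$ satisfies $g_u\varphi_u = \varphi_u$ and $g_u\psi_u = \psi_u$. Since $\varepsilon_u$ is the coarsest common refinement of $(\varphi_u,\psi_u)$ (as observed immediately after Definition \ref{defcoarcomref}) and is an isomorphism because $\varepsilon$ is, the induction hypothesis applied to $\mathcal{T}_u$ yields $g_u = \id_{\mathcal{T}_u}$; combined with the pointwise fixing of $W_\mathcal{T}$, this gives $g = \id_\mathcal{T}$.

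For the converse, suppose $\varepsilon$ is not trivial, hence not injective. Given any $u \neq v$ with $\varepsilon(u) = \varepsilon(v)$, their parents also have equal $\varepsilon$-image (since $\varepsilon$ is a rooted tree homomorphism), so climbing the tree until the two ancestors coincide produces distinct \emph{sibling} vertices $u',v' \in \mathcal{T}$ with $\varepsilon(u') = \varepsilon(v')$. By Remark \ref{remcommref} there is a rooted tree isomorphism $\zeta \colon \mathcal{T}_{u'} \to \mathcal{T}_{v'}$ satisfying $\varphi_{u'} = \varphi_{v'} \circ \zeta$ and $\psi_{u'} = \psi_{v'} \circ \zeta$. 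Define $g \in \Aut(\mathcal{T})$ to act as $\zeta$ on $\mathcal{T}_{u'}$, as $\zeta^{-1}$ on $\mathcal{T}_{v'}$, and as the identity on the complement; since $u',v'$ are siblings this is a bona fide non-trivial involution in $\Aut(\mathcal{T})$, and a direct check on each of the three pieces, using the strict equivalence identities for $\zeta$ together with their inverse form $\varphi_{v'} = \varphi_{u'}\circ\zeta^{-1}$, $\psi_{v'} = \psi_{u'}\circ\zeta^{-1}$, yields $\varphi \circ g = \varphi$ and $\psi \circ g = \psi$, producing the desired non-identity element of the stabilizer.

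The main obstacle is the forward step at the first level: translating the assertion that $g$ restricts to a tree isomorphism $\mathcal{T}_u \to \mathcal{T}_v$ carrying $\varphi_u$ onto $\varphi_v$ and $\psi_u$ onto $\psi_v$ into precisely the strict equivalence of couples in the sense of \eqref{defstreqphipsi}, so that Lemma \ref{Propcoarreftriv} applies and rules it out. Once this bookkeeping is clean, the descent into each $\mathcal{T}_u$ and the explicit swap construction for the converse are both routine.
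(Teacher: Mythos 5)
Your proof is correct and follows essentially the same strategy as the paper's: induction on height, using Lemma \ref{Propcoarreftriv} to force $g$ to fix $V_\mathcal{T}$ pointwise and then descending to the first-level subtrees, with the converse built on exactly the same sibling-swap automorphism. The only cosmetic difference is that you invoke Lemma \ref{Propcoarreftriv} directly for the first-level step, whereas the paper refers back to the equivalence relation $\sim$ in the proof of Theorem \ref{Propcoarsercommref}; these amount to the same thing.
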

\begin{proof}
Suppose that the coarsest common refinement is trivial and that $g\varphi=\varphi, g\psi=\psi$. If $H(\mathcal{T})=1$, from the characterization of $\mathcal{R}\equiv A'$ in Example \ref{excomrefh1} we deduce that $g\equiv\pi$ is trivial: $\varphi'=\varphi$, $\psi'=\psi$ and, by \eqref{stellap18varep}, $\lvert \varphi^{-1}(x)\cap\psi^{-1}(y)\rvert\leq 1$ for all $x\in X, y\in Y$, so that \eqref{intmatVbis} forces $\pi=$ the identity. If $H(\mathcal{T})\geq 2$ then from Lemma \ref{Propcoarreftriv} and the definition of $\sim$ in the proof of Theorem \ref{Propcoarsercommref} it follows that $g$ is trivial at the first level and then we can apply induction  to each $\mathcal{T}_u, u\in V_\mathcal{T}\setminus W_\mathcal{T}$, to deduce that $g_u$ is also trivial.

If the coarsest common refinement $\rho$ is not injective then there exists an internal $v\in\mathcal{T}$ such that 
$\rho|_{\text{\rm Ch}(v)}\colon\text{\rm Ch}(v)\rightarrow\text{\rm Ch}(\rho(v))$]
is not injective, so that there exist distinct $u,w\in\text{\rm Ch}(v)$ such that $\rho(u)=\rho(w)\eqqcolon a$. Then the tree compositions $\rho_u$ and $\rho_w$ are strictly equivalent and therefore we can define a nontrivial
 $g\in\Aut(\mathcal{T})$ by setting
\begin{equation}\label{defgsign}
\begin{split}
&g \text{ switches }\mathcal{T}_u\text{ and }\mathcal{T}_w\text{ by means of the strict equivalence;}\\
&g\text{ fixes all the vertices outside }\mathcal{T}_u\text{ and }\mathcal{T}_w.
\end{split}
\end{equation}
\noindent
It follows that $\rho=\rho\circ g$ so that, by \eqref{diagdefcommref}, $g\varphi=\vartheta\circ\rho\circ g^{-1}=\vartheta\circ \rho =\varphi$; similarly $g\psi=\psi$. 
\end{proof}

\begin{definition}\label{def01inters}{\rm
We say that $\varphi\in\mathsf{C}(\mathcal{T},\mathcal{P})$ and $\psi\in\mathsf{C}(\mathcal{T},\mathcal{M})$ have $0-1$ {\em intersection} if the following condition is verified: for each $v\in \mathcal{T}$ which is internal we have
\begin{equation}\label{01cond}
\left\lvert \varphi^{-1}(x)\cap\psi^{-1}(y)\right\rvert\leq 1,
\end{equation}
for all $x\in\text{\rm Ch}_\mathcal{P}(\varphi(v)), y\in\text{\rm Ch}_\mathcal{M}(\psi(v))$.
}
\end{definition}

\begin{remark}\label{Rem01cond}
{\rm
The apparently weaker condition $\left\lvert\text{\rm Ch}(v)\cap \varphi^{-1}(x)\cap\psi^{-1}(y)\right\rvert\leq 1$ is equivalent to \eqref{01cond}. Indeed, if $w\in\text{\rm Ch}(v)\cap\varphi^{-1}(x)\cap\psi^{-1}(y)$ then we may consider the paths 
$\{\emptyset\}, v_1, v_2,\dotsc, v_{k-1}=v,v_k=w$ in $\mathcal{T},\;$
$\{\emptyset\}, x_1, x_2,\dotsc, x_k=x$ in $\mathcal{P}\;$ and $\{\emptyset\}, y_1, y_2,\dotsc, y_k=y\text{ in }\mathcal{M}$.
Therefore $x_j=\varphi(v_j)$ and $y_j=\psi(v_j)$, $j=1,2,\dotsc,k-1$ and we can apply the (apparently) weaker condition to $x_1,y_1$, then to $x_2,y_2$, and so on, getting  that $\varphi^{-1}(x_j)\cap\psi^{-1}(y_j)=\{v_j\}$, $j=1,2,\dotsc,k-1,k$.
}
\end{remark}

\begin{lemma}\label{Lemma01cond}
If $\varphi$ and $\psi$ have $0-1$ intersection 
 then their coarsest common refinement is trivial.
\end{lemma}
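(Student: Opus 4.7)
The plan is to reduce the lemma to the criterion already provided by Lemma \ref{Propcoarreftriv}: the coarsest common refinement of $(\varphi,\psi)$ is trivial if and only if, for every pair of distinct siblings $u,v\in\mathcal{T}$ with $\varphi(u)=\varphi(v)$ and $\psi(u)=\psi(v)$, the couples $(\varphi_u,\psi_u)$ and $(\varphi_v,\psi_v)$ are not strictly equivalent (with the trivial interpretation when $u,v$ are leaves). My strategy will be to show that the $0$-$1$ intersection hypothesis rules out the hypothesis of that criterion altogether: no two distinct siblings can even have matching images under both $\varphi$ and $\psi$, so the implication in Lemma \ref{Propcoarreftriv} holds vacuously.

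Concretely, I would argue by contradiction. Suppose there exist distinct siblings $u,v\in\mathcal{T}$ with common parent $w$ such that $\varphi(u)=\varphi(v)\eqqcolon x$ and $\psi(u)=\psi(v)\eqqcolon y$. Since $w$ has at least the two children $u$ and $v$, it is internal; and since $\varphi$ and $\psi$ are rooted tree homomorphisms, $x\in\text{\rm Ch}(\varphi(w))$ and $y\in\text{\rm Ch}(\psi(w))$. But then $\{u,v\}\subseteq \varphi^{-1}(x)\cap\psi^{-1}(y)$, so $\bigl|\varphi^{-1}(x)\cap\psi^{-1}(y)\bigr|\geq 2$, which violates \eqref{01cond} at the internal vertex $w$ (cf.\ also Remark \ref{Rem01cond}, which justifies reading \eqref{01cond} in this ``child-block'' form). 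This contradiction rules out the hypothesis of Lemma \ref{Propcoarreftriv} both in the internal-sibling case and in the trivially interpreted case of sibling leaves; hence the coarsest common refinement of $(\varphi,\psi)$ is trivial.

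There is essentially no genuine obstacle here: the hard work has already been done, both in Lemma \ref{Propcoarreftriv} and in the careful formulation of the $0$-$1$ intersection condition (Definition \ref{def01inters} together with Remark \ref{Rem01cond}). The only thing to watch is that \eqref{01cond} is stated at internal vertices $v$ of $\mathcal{T}$ and concerns children of $\varphi(v)$ and $\psi(v)$ in $\mathcal{P}$ and $\mathcal{M}$, so one must identify the ``correct'' vertex $w$ at which to invoke the condition; this is precisely the common parent of the hypothetical colliding siblings, and the rest is immediate.
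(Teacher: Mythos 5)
Your proposal is correct, and it takes a genuinely different route from the paper's. Both arguments hinge on the same observation --- the $0$-$1$ condition at an internal vertex $w$ forbids two distinct children $u,v$ of $w$ from satisfying both $\varphi(u)=\varphi(v)\eqqcolon x$ and $\psi(u)=\psi(v)\eqqcolon y$, since then $\{u,v\}\subseteq\varphi^{-1}(x)\cap\psi^{-1}(y)$ with $x\in\text{\rm Ch}(\varphi(w))$, $y\in\text{\rm Ch}(\psi(w))$, violating \eqref{01cond} --- but they wrap it differently. The paper applies the observation only at the root, concluding that the equivalence relation $\sim$ built inside the proof of Theorem \ref{Propcoarsercommref} is trivial (each class a singleton), and then inducts on the first-level subtrees $\mathcal{T}_u$ to show the lower-level refinements $\varepsilon'_u$ in \eqref{quadp20} are also trivial. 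You instead apply the observation uniformly at every internal vertex, conclude that the antecedent of Lemma \ref{Propcoarreftriv}'s criterion (a pair of distinct siblings colliding under both $\varphi$ and $\psi$) can never occur at any level, and invoke Lemma \ref{Propcoarreftriv} to finish in one step. Your route is cleaner: it avoids reopening the construction of Theorem \ref{Propcoarsercommref} and avoids explicit induction by leaning on the already-packaged criterion. It also makes visible that $0$-$1$ intersection is strictly stronger than what Lemma \ref{Propcoarreftriv} requires --- the criterion tolerates sibling collisions provided the restricted couples $(\varphi_u,\psi_u)$ and $(\varphi_v,\psi_v)$ are not strictly equivalent, whereas you rule out collisions outright --- which is exactly why Remark \ref{RemLemma01cond} can exhibit a trivial coarsest common refinement without $0$-$1$ intersection.
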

\begin{proof}
If \eqref{01cond} is verified, then the equivalence relation $\sim$ in the proof of Theorem \ref{Propcoarsercommref} is trivial (each equivalence class is a singleton). By induction, also the common refinements $\rho'_v\colon\mathcal{T}_v\rightarrow\mathcal{R}_{\rho'(v)}$ must be trivial.
\end{proof}

The converse of Lemma \ref{Lemma01cond} is not true: just note that, in the proof of Theorem \ref{Propcoarsercommref} for $H(\mathcal{T})\geq2$, the equivalence relation $\sim$ leads to a refinement of \eqref{intpart}. 

\section{The total order and the transpose}\label{Sectransp}

If $(\Lambda,\mathcal{Q})$ and $(\Xi,\mathcal{N})$ are tree of partitions, not necessarily of the same height, we need to compare them by means of a suitable total order which will be written in the form $\Xi<\Lambda$. It will have the following property:
\begin{equation}\label{propertyp30b}
H(\mathcal{Q})>H(\mathcal{N})\Longrightarrow\Lambda>\Xi.
\end{equation}
We will write $\Lambda\equiv\Xi$ to indicate that $(\Lambda,\mathcal{Q})$ and $(\Xi,\mathcal{N})$ are isomorphic.

We define the order by induction on $h$, the maximum height of the trees. If $h=1$ we use the lexicographic order between integer partitions; cf. \cite[1.4.5]{JK} or \cite[p. 222]{Sagan}. We specify it because we need to compare also partitions of different integers.  If $\lambda=(\lambda_1,\lambda_2,\dotsc,\lambda_k)$ and $\xi=(\xi_1,\xi_2,\dotsc,\xi_l)$ are distinct partitions then we write $\lambda>\xi$ if one of the following two cases is verified: (1) there exists and index $i$ such that $\lambda_1=\xi_1, \lambda_2=\xi_2, \dotsc, \lambda_{i-1}=\xi_{i-1}$ and $\lambda_i>\xi_i$;
(2) $k>l$ and $\lambda_i=\xi_i$, $i=1,2,\dotsc,l$.

Assume now that $h\geq 2$ and that we have introduced a total order $\geqq_{h-1}$ between trees of partitions of height $\leq h-1$. First of all, we need to introduce an equivalence relation $\equiv'$ and then a total order $>'$ between first level vertices of trees of partitions of height $\leq h$. Suppose that $a\in V_\mathcal{Q}$, $b\in V_\mathcal{N}$, where $(\Lambda,\mathcal{Q})$ and $(\Xi,\mathcal{N})$ are trees of partitions of height $\leq h$; note that we need to compare also two vertices of the same tree, so that it is possible that $(\Lambda,\mathcal{Q})$ and $(\Xi,\mathcal{N})$ are isomorphic. As usual, $\Lambda_a$, $a\in\mathcal{Q}$ internal, does not have a value on $a$ because it is the root of $\mathcal{Q}_a$; cf. Definition \ref{Deftreepart}. Moreover, as specified above, $\equiv_{h-1}$ will denote isomorphism of two tree of partitions of height $\leq h-1$. This is the equivalence relation: we write that

\begin{equation}\label{comparingblocksc}
a\equiv'b
\end{equation}
when one of the following conditions is satisfied:
\begin{itemize}
\item
$a$ and $b$ are both leaves and $\lambda^a=\lambda^b$; 

\item
$a$ and $b$ are internal,
$\Lambda_a\equiv_{h-1}\Xi_b$ and $\lambda^a=\lambda^b$.
\end{itemize}

In other words, $a\equiv'b$ means that we have an isomorphism between the tree of partitions $\mathcal{Q}_a$ and $\mathcal{N}_a$ where also to the root is associated a partition. 
After that, we define the total order: if $a\not\equiv'b$ we write that 

\begin{equation}\label{comparingblocksb}
a>'b
\end{equation} 
if one of the following conditions is verified:
\begin{enumerate}
\item
$a$ and $b$ are both leaves and $\lambda^a>\lambda^b$; 
\item
$a$ is internal while $b$ is a leaf;
\item
$a$ and $b$ are internal and $\Lambda_a>_{h-1}\Xi_b$;
\item 
$\Lambda_a\equiv_{h-1}\Xi_b$ but $\left\lvert\lambda^a\right\rvert>\left\lvert\lambda^b\right\rvert$;
\item
$\Lambda_a\equiv_{h-1}\Xi_b$, $\left\lvert\lambda^a\right\rvert=\left\lvert\lambda^b\right\rvert$,
 but $\lambda^a>\lambda^b$.
\end{enumerate}

If $a,a'\in V_\mathcal{Q}$, $a\neq a'$ then it is impossible that $\Lambda_a\equiv_{h-1}\Lambda_{a'}$, because, by Definition \ref{Deftreepart}, the tree of partitions $\Lambda_a$ and $\Lambda_{a'}$ are not isomorphic. Therefore we may order the vertices of $V_\mathcal{Q}$ and $V_\mathcal{N}$ accordingly to $<'$:
\[
V_\mathcal{Q}=\{a_1>' a_2>'\dotsb >'a_l\}\quad\text{ and }\quad V_\mathcal{N}=\{b_1>' b_2>'\dotsb >'b_k\}.
\]
Finally, we set $\Lambda>_h\Xi$ 
\begin{equation}\label{deftotord}
\begin{split}
&\bullet\text{ if }\; l>k, \;\text{ and } a_1\equiv'b_1,a_2\equiv'b_2,\dotsc,a_k\equiv'b_k, \\
&\bullet\text{ or if there exists } j \text{ such that } a_1\equiv'b_1,a_2='b_2,\dotsc,a_{j-1}\equiv'b_{j-1}\text{ and }a_j>'b_j.
\end{split}
\end{equation}  
Clearly, also $\geqq_h$ respect the condition in \eqref{propertyp30b}. Note also that if $l=k$ and $a_1\equiv'b_1,a_2\equiv'b_2,\dotsc,a_l\equiv'b_l$ then $\Lambda\equiv \Xi$ so that if $\Lambda\not\equiv \Xi$ then $\Lambda>\Xi$ or $\lambda<\Xi$.

By Theorem \ref{Theoorbits}\eqref{Theoorbitsa}, the order just introduced may be seen as an order between equivalence classes of tree compositions. Therefore, if $\varphi$ and $\psi$ are tree compositions and $(\Lambda,\mathcal{Q}),(\Xi,\mathcal{N})$ are the respective tree of partitions, then we will write 
\begin{equation}\label{Totord}
\varphi\geqq \psi
\end{equation} 
when $\Lambda\geqq \Xi$, noting that the equality in \eqref{Totord} means that $\varphi$ and $\psi$ are equivalent; if this is the case we will write $\varphi\equiv\psi$.

If $\lambda$ is an integer partition we indicate by $\lambda^t$ its conjugate; cf. \cite{CST3, JK}. If $(\Lambda,\mathcal{Q})$ is a tree of partitions then we denote by $\Lambda^t$ the partitional labeling of $\mathcal{Q}$ given by $\Lambda^t\colon a\mapsto\left(\lambda^a\right)^t$. Clearly, also $(\Lambda^t,\mathcal{Q})$ is a tree of partitions; cf. Definition \ref{Deftreepart}.

\begin{theorem}\label{PropositionTransposed}
Suppose that $\varphi\in\mathsf{C}(\mathcal{T},\mathcal{P})$, $\alpha\in\mathsf{C}(\mathcal{P},\mathcal{Q})$ is its quotient and that $(\Lambda,\mathcal{Q})$ is the associated tree of partitions.
There exists a tree composition $\varphi^t\in\mathsf{C}(\mathcal{T},\mathcal{P}^t)$ (a {\em transpose} of $\varphi$) such that:
\begin{enumerate}[\rm (a)]

\item\label{transp1} if $\alpha^t\in\mathsf{C}(\mathcal{P}^t,\mathcal{Q}^t)$ is the quotient of $\varphi^t$ then $\mathcal{Q}^t=\mathcal{Q}$, $\alpha^t\circ\varphi^t=\alpha\circ\varphi$ and the tree of partitions of $\varphi^t$ is $(\Lambda^t,\mathcal{Q})$;

\item\label{transp3}
the tree compositions $\varphi$ and $\varphi^t$ have $0-1$ intersection (cf. Definition \ref{def01inters}). 
\end{enumerate}
We may also construct $\varphi^t$ in such a way that it has the same $\tau$'s of $\varphi$; cf. Definition \ref{Defpseudo}. 
\end{theorem}
\begin{proof}
The proof is by induction on $H(\mathcal{T})$. For $H(\mathcal{T})=1$ this is a standard construction; see the {\em existence} of $m^\lambda$ in \cite[Section 7]{CST3} or of the $0-1$ matrix in \cite[2.1.1]{JK}. We just point out that if $\varphi\colon V_\mathcal{T}\rightarrow V_\mathcal{P}$ determines a composition of type $\lambda$ while $\psi\colon V_\mathcal{T}\rightarrow V_{\mathcal{P}^t}$ determines a composition of type $\lambda^t$ then there exists $\pi\in\Sym\left(V_\mathcal{T}\right)$ such that
\begin{equation}\label{01matrixphvph1}
\left\lvert \varphi^{-1}(x)\cap \left(\psi\circ\pi\right)^{-1}(y) \right\rvert\leq1,\quad\text{ for all }x\in V_\mathcal{P}, y\in V_{\mathcal{P}^t}.
\end{equation}

Suppose now that $H(\mathcal{T})\geq 2$. First of all, note that $\delta\coloneqq\alpha\circ\varphi$ is also a realization of $\left(\Lambda^t,\mathcal{Q}\right)$ as a tree of partitions of $\mathcal{T}$, just because $\left\lvert\left(\lambda^a\right)^t\right\rvert=\left\lvert\lambda^a\right\rvert$, for every $a\in\mathcal{Q}$; cf. Definition \ref{defIrrG}. On the other hand, from Proposition \ref{ProptreepartT} we deduce the existence of a tree composition $\psi\in\mathsf{C}\left(\mathcal{T},\mathcal{P}^t\right)$ whose tree of partitions is $\left(\Lambda^t,\mathcal{Q}\right)$; we denote by $\alpha^t\in\mathsf{C}\left(\mathcal{P}^t,\mathcal{Q}\right)$ the quotient of $\psi$. Then $\delta'\coloneqq \alpha^t\circ\psi$ is another realization of $\left(\Lambda^t,\mathcal{Q}\right)$ so that, by Corollary \ref{CortreepartT}\eqref{CortreepartTb}, there exists $g'\in\Aut(\mathcal{T})$ such that $\delta=\delta'\circ g'$. By replacing $\psi$ with the strictly equivalent tree composition $\psi\circ g'$, we may suppose that indeed $\delta'=\delta$.

Now, by induction, we prove the existence of $g\in\Aut(\mathcal{T})$ such that $\psi\circ g$ and $\varphi$ have $0-1$ intersection and $\delta\circ g=\delta$, that is $\delta$ is also the realization of $\psi\circ g$. For $H(\mathcal{T})=1$ it is just the $\pi$ in \eqref{01matrixphvph1}. Then, for each $a\in V_\mathcal{Q}\setminus W_\mathcal{Q}$, we fix $v_a\in\delta^{-1}(a)$ so that, by the inductive hypothesis, there exists $\widetilde{g}_a\in\Aut\left(\mathcal{T}_{v_a}\right)$ such that $\varphi^t_a\coloneqq\psi_{v_a}\circ \widetilde{g}_a$ has $0-1$ intersection with $\varphi_a$ and $\delta_{v_a}=\delta_{v_a}\circ \widetilde{g}_a$.

As in the case $H(\mathcal{T})=1$, for each $a\in V_\mathcal{Q}\setminus W_\mathcal{Q}$ there exists $\pi\in\Sym\left(V_\mathcal{T}\right)$ such that $\pi\left(\delta^{-1}(a)\right)=\delta^{-1}(a)$ for all $a\in V_\mathcal{Q}$ and $\psi\circ\pi\rvert_{V_\mathcal{T}}$ has $0-1$ intersection with $\varphi\rvert_{V_\mathcal{T}}$. Then, by setting $g\rvert_{V_\mathcal{T}}\coloneqq \pi$ and $\varphi^t\rvert_{V_\mathcal{T}}\coloneqq\psi\circ\pi\rvert_{V_\mathcal{T}}$,  \eqref{transp1} and \eqref{transp3} are verified at the first level and also $\delta\rvert_{V_\mathcal{T}}=\delta\rvert_{V_\mathcal{T}}\circ\pi$. After that, denote by $\tau_{u,v}$ a set of first level $\tau$'s of $\varphi$, by $\tau'_{u,v}$ a set of first level $\tau$'s of $\psi$ and by $\omega^t_{x,y}$ a set of first level $\omega$'s for $\psi$  (cf. Definition \ref{Defpseudo}). For all $a\in V_\mathcal{Q}\setminus W_\mathcal{Q}$, $x\in \left(\alpha^t_a\right)^{-1}(a)$ and $u\in \left(\varphi^t\rvert_{V_\mathcal{T}}\right)^{-1}(x)$, set
\begin{equation}\label{defgforphit}
g_u\coloneqq\tau'_{\pi(u),v_a}\circ \widetilde{g}_a\circ\tau_{v_a,u},
\end{equation}
so that $g_u\colon\mathcal{T}_u\rightarrow\mathcal{T}_{\pi(u)}$ and $g$ is completely defined. For $u\in V_\mathcal{T}\setminus W_\mathcal{T}$ we have
\[
\begin{split}
(\delta\circ g)_u=&\alpha^t_{\psi(\pi(u))}\circ\psi_{\pi(u)}\circ\tau'_{\pi(u),v_a}\circ \widetilde{g}_a\circ\tau_{v_a,u}\\
(\text{by }\eqref{diagTRSuv})\quad=&\alpha^t_{\psi(v_a)}\circ\psi_{v_a}\circ \widetilde{g}_a\circ\tau_{v_a,u}\\
(\delta'=\delta, \delta_{v_a}=\delta_{v_a}\circ \widetilde{g}_a)\quad=&\alpha_{\varphi(v_a)}\circ\varphi_{v_a}\circ\tau_{v_a,u}\\
(\text{by }\eqref{diagTRSuv})\quad=&\delta_u,
\end{split}
\]
so that $\delta=\delta\circ g$. Finally, we set $\varphi^t\coloneqq\psi\circ g$. For $u,w\in V_\mathcal{T}\setminus W_\mathcal{T}$ we have:
\[
\begin{split}
\varphi^t_u\circ\tau_{u,w}=&\psi_{\pi(u)}\circ g_u\circ\tau_{u,w}\\
(\text{by }\eqref{proptau2b}\text{ and }\eqref{defgforphit})\quad=&\psi_{\pi(u)}\circ\tau'_{\pi(u),\pi(w)}\circ g_w\\
(\text{by }\eqref{diagTRSuv})\quad=&\omega^t_{\psi(\pi(u)),\psi(\pi(w))}\circ\psi_{\pi(w)}\circ g_w\\
(\varphi^t=\psi\circ g)\quad=&\omega^t_{\varphi^t(u),\varphi^tw)}\circ\varphi^t_w.
\end{split}
\]
Therefore $\varphi^t$ has the same $\tau$'s of $\varphi$ and the $\omega$'s of $\psi$.

We end by proving the $0-1$ intersection property at lower levels. Suppose that $u\in\mathcal{T}$ is internal, $x\in\text{\rm Ch}_\mathcal{P}\left(\varphi(u)\right)$ and $y\in\text{\rm Ch}_{\mathcal{P}^t}\left(\varphi^t(u)\right)$. Denote by $\widetilde{u}$, $\widetilde{x}$ and $\widetilde{y}$ 
respectively the ancestor of $u$, $\varphi(u)$ and $\varphi^t(u)$ at the first level.
From the $0-1$ property at the first level it follows that $\varphi^{-1}(\widetilde{x})\bigcap\left(\varphi^t\right)^{-1}(\widetilde{y})=\{\widetilde{u}\}$, so that $\varphi^{-1}(\mathcal{P}_{\widetilde{x}})\bigcap\left(\varphi^t\right)^{-1}(\mathcal{P}_{\widetilde{y}})\subseteq\mathcal{T}_{\widetilde{u}}$. Therefore
\[
\begin{split}
\varphi^{-1}(x)\bigcap\left(\varphi^t\right)^{-1}(y)=&\left(\varphi_{\widetilde{u}}\right)^{-1}(x)\bigcap\left(\varphi_{\widetilde{u}}^t\right)^{-1}(y)\\
(\text{by }\eqref{diagTRSuv})\quad=&\left[\tau_{\widetilde{u},v_a}\circ\left(\varphi_{v_a}\right)^{-1}\circ\omega_{\varphi(v_a),\widetilde{x}}\right](x)\bigcap\left[\tau_{\widetilde{u},v_a}\circ\left(\varphi_{v_a}^t\right)^{-1}\circ\omega^t_{\varphi^t(v_a),\widetilde{y}}\right](y)\\
=&\tau_{\widetilde{u},v_a}\left\{\left[\left(\varphi_{v_a}\right)^{-1}\left(\omega_{\varphi(v_a),\widetilde{x}}(x)\right)\right]\bigcap\left[\left(\varphi^t_{v_a}\right)^{-1}\left(\omega^t_{\varphi^t(v_a),\widetilde{y}}(y)\right)\right]\right\}\\
\end{split}
\]
so that
\begin{equation}\label{starp41}
\left\lvert\varphi^{-1}(x)\bigcap\left(\varphi^t\right)^{-1}(y)\right\rvert=\left\lvert\left[\left(\varphi_{v_a}\right)^{-1}\left(\omega_{\varphi(v_a),\widetilde{x}}(x)\right)\right]\bigcap\left[\left(\varphi^t_{v_a}\right)^{-1}\left(\omega^t_{\varphi^t(v_a),\widetilde{y}}(y)\right)\right]\right\rvert\leq 1
\end{equation}
and now we prove the last inequality in \eqref{starp41}: we have
\[
\begin{split}
\varphi_{v_a}\circ\tau_{v_a,\widetilde{u}}(u)=&\omega_{\varphi(v_a),\widetilde{x}}\circ\varphi_{\widetilde{u}}(u)=\omega_{\varphi(v_a),\widetilde{x}}(\varphi(u))\quad\text{ and }\quad x\in\text{\rm Ch}_\mathcal{P}\left(\varphi(u)\right)\\
&\Longrightarrow\quad \omega_{\varphi(v_a),\widetilde{x}}(x)\in\text{\rm Ch}_\mathcal{P}\left[\varphi_{v_a}(\tau_{v_a,\widetilde{u}}(u))\right]
\end{split}
\]
and similarly $\omega^t_{\varphi^t(v_a),\widetilde{y}}(y)\in\text{\rm Ch}_{\mathcal{P}^t}\left[\varphi^t_{v_a}(\tau_{v_a,\widetilde{u}}(u))\right]$,
so that, by the inductive hypothesis, we may apply the $0-1$ property within $\mathcal{T}_{v_a}$.
\end{proof}

\begin{definition}{\rm
A {\em transpose} of $\varphi$ is a tree composition $\varphi^t$ with all the properties in Theorem \ref{PropositionTransposed}.
}
\end{definition}

\begin{remark}\label{Remtransp}{\rm
Note that $\varphi^t$ is not unique nor a tree composition strictly equivalent to $\varphi^t$ is still a transpose; these facts are obvious even in the case $H(\mathcal{T})=1$. On the other hand, if $\psi\in\mathsf{C}(\mathcal{T}',\mathcal{P})$ is strictly equivalent to $\varphi$, say $\psi=\varphi\circ\beta$ where $\beta\colon\mathcal{T}'\rightarrow\mathcal{T}$ is an isomorphism, then $\varphi^t\circ\beta$ is a transpose of $\psi$, because all the properties in Theorem \ref{PropositionTransposed} are clearly preserved by $\beta$; cf. Remark \ref{Remobviousfacts}, Remark \ref{quotstequiv} and Theorem \ref{Theoorbits}. Finally, it is obvious that $\varphi$ is a transpose of $\varphi^t$.
}
\end{remark}

\begin{example}\label{transptvid}{\rm
In the notation of Example \ref{idtreecomp}, $\tv_\mathcal{T}$ is a transpose of $\id_\mathcal{T}$ and vice versa; see also Example \ref{treeparttvid} for the case of a spherically homogeneous tree. 
}
\end{example}

\begin{theorem}\label{TheoGalRy}
Suppose that $\varphi\in\mathsf{C}(\mathcal{T},\mathcal{P})$ and $\psi\in\mathsf{C}(\mathcal{T},\mathcal{M})$ are two  tree compositions of $\mathcal{T}$. If their coarsest common refinement is trivial (cf. Definition \ref{defcoarcomref}) then $\varphi\leqq\psi^t$ (and 
also, by symmetry, $\psi\leqq\varphi^t$) with respect to the total order in \eqref{Totord}.
\end{theorem}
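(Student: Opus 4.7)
The plan is to proceed by induction on $h=H(\mathcal{T})$, mimicking the easier direction of the classical Gale--Ryser theorem at the leaves and propagating the estimate upward via the first level decomposition.

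For the base case $h=1$, the final sentence of Remark~\ref{RemLemma01cond} tells us that the converse of Lemma~\ref{Lemma01cond} holds when $h=1$, so triviality of the coarsest common refinement is equivalent to the two set compositions $\varphi$ and $\psi$ of $V_\mathcal{T}$ having $0$--$1$ intersection. The classical easy direction of Gale--Ryser \cite[1.4.17]{JK} then yields that the type of $\varphi$ is dominated (in the usual dominance order) by the conjugate of the type of $\psi$, i.e.\ by the type of $\psi^t$. Since both partitions have the common sum $|V_\mathcal{T}|$, and dominance on numerical partitions of the same integer refines the lexicographic order used in Section~\ref{Sectreeofpart}, we conclude $\varphi\leqq\psi^t$.

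For the inductive step $h\geq 2$, I would first observe that the inductive construction of $\varepsilon$ in the proof of Theorem~\ref{Propcoarsercommref} builds the coarsest common refinement of $(\varphi,\psi)$ out of the coarsest common refinements of the restricted couples $(\varphi_u,\psi_u)$ on each first-level subtree $\mathcal{T}_u$. If the global $\varepsilon$ is an isomorphism, then so is each $\varepsilon_u$, hence $(\varphi_u,\psi_u)$ has trivial coarsest common refinement on $\mathcal{T}_u$. The inductive hypothesis then gives $\varphi_u\leqq\psi_u^t$ for every internal first-level vertex $u$, which already controls the comparison of trees of partitions at the lower levels.

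It remains to combine these subtree inequalities with the first-level combinatorics. Let $(\Lambda,\mathcal{Q})$ be the tree of partitions of $\varphi$ and $(\Gamma^t,\mathcal{N})$ that of $\psi^t$ (see Theorem~\ref{PropositionTransposed}); order $V_\mathcal{Q}=\{a_1>'\cdots>'a_l\}$ and $V_\mathcal{N}=\{b_1>'\cdots>'b_k\}$ using the transposed partition data on the $\mathcal{N}$ side, and compare the two sequences place by place according to the definition of the total order in Section~\ref{Sectreeofpart}. The partitions $\lambda^{a_j}$ and $(\mu^{b_j})^t$ must be compared by using the first-level set compositions $V_\mathcal{T}=\bigsqcup_x\varphi^{-1}(x)=\bigsqcup_y\psi^{-1}(y)$, restricted to the equivalence-class blocks $(\alpha\circ\varphi)^{-1}(a_j)$ and $(\beta\circ\psi)^{-1}(b_j)$ respectively.

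The main obstacle will be exactly this top-level comparison: unlike the leaf case, trivial coarsest common refinement does \emph{not} in general force $0$--$1$ intersection of the first-level set compositions (see Remark~\ref{RemLemma01cond}), so the Gale--Ryser counting cannot be applied directly to the raw intersection matrix $|\varphi^{-1}(x)\cap\psi^{-1}(y)|$. The strategy to bypass this is to refine each cell $\varphi^{-1}(x)\cap\psi^{-1}(y)$ by the strict equivalence class of the couple $(\varphi_u,\psi_u)$: by Lemma~\ref{Propcoarreftriv}, distinct siblings $u,v$ with $\varphi(u)=\varphi(v)$ and $\psi(u)=\psi(v)$ have non-strictly-equivalent couples, so each refined cell contains at most one element, producing a $0$--$1$ pattern on an enriched index set. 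A Gale--Ryser-style counting on this refined pattern, together with the inductive estimates $\varphi_u\leqq\psi_u^t$ controlling the ``enrichment'', will yield the required first-level inequality; a contrapositive formulation (assuming $\psi^t<\varphi$ and using the first place of disagreement to manufacture a pair of siblings that violate Lemma~\ref{Propcoarreftriv}) may give a slightly cleaner route.
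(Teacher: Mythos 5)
Your base case ($h=1$) is correct and matches the paper: the converse of Lemma~\ref{Lemma01cond} holds for height one (Remark~\ref{RemLemma01cond}), the easy direction of Gale--Ryser gives dominance, and dominance between partitions of the same integer implies the lexicographic order. Your observation that triviality of the global $\varepsilon$ forces triviality of each $\varepsilon_u$ on the first-level subtrees, so that the inductive hypothesis yields $\varphi_u\leqq\psi_u^t$ for every internal $u\in V_\mathcal{T}$, is also the right preparatory step, and you correctly identify the obstacle: for $h\geq 2$ trivial coarsest common refinement does not give a first-level $0$--$1$ intersection.

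Where the proposal has a genuine gap is precisely the step that carries the theorem: combining the subtree estimates with the first-level data. Your plan to ``refine each cell by the strict equivalence class of the couple'' and then run a ``Gale--Ryser-style counting'' on the enriched $0$--$1$ pattern is not carried out, and it is not clear it can be carried out as stated. The total order $\leqq$ from Section~\ref{Sectreeofpart} is not a dominance order on row/column sums; it is a hierarchical, essentially lexicographic order whose \emph{first} comparison criterion is between the trees of partitions $\Lambda_{a_0}$ and $\Gamma_{b_0}$ attached to the $>'$-greatest first-level vertices $a_0\in V_\mathcal{Q}$, $b_0\in V_\mathcal{N}$, and only then between cardinalities and lexicographic types of $\lambda^{a_0},\mu^{b_0}$. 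A counting inequality on an enriched incidence matrix does not, by itself, compare such data, and you do not explain how the inductive bounds $\varphi_u\leqq\psi_u^t$ ``control the enrichment''.

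The paper's proof instead develops exactly the one-sentence alternative you mention at the end: it sorts $V_\mathcal{Q}$ and $V_\mathcal{N}$ by $>'$, looks at the greatest elements $a_0,b_0$, and argues by cases. If some $v\in(\alpha\circ\varphi)^{-1}(a_0)$ falls in a block $(\beta^t\circ\psi^t)^{-1}(b)$ with $b\neq b_0$, the inductive hypothesis gives $\Lambda_{a_0}\leq\Gamma_b$ while maximality of $b_0$ gives $\Gamma_b<\Gamma_{b_0}$, so $a_0<'b_0$ and one is done at once. Otherwise $(\alpha\circ\varphi)^{-1}(a_0)\subseteq(\beta^t\circ\psi^t)^{-1}(b_0)$ and one checks, in order: strict inequality of $\Lambda_{a_0}$ versus $\Gamma_{b_0}$; proper containment (i.e.\ $|\lambda^{a_0}|<|\mu^{b_0}|$); lexicographic inequality $\lambda^{a_0}<\mu^{b_0}$; otherwise $a_0\equiv'b_0$, one deletes these blocks, and runs a nested induction on $|V_\mathcal{Q}|$, using Lemma~\ref{Propcoarreftriv} to check triviality is preserved and Theorem~\ref{Theorem38} to rule out premature equivalence. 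The nested induction on $|V_\mathcal{Q}|$ and the step-by-step maximal-element comparison are what replace the Gale--Ryser counting here, and neither appears in your sketch.
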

\begin{proof}
The proof is by induction on $h\coloneqq H(\mathcal{T})$, and for every $h$ we will use also induction on $\lvert V_\mathcal{Q}\rvert$, the number of first level vertices of $\mathcal{Q}$.
For $h=1$ the theorem is just a weak form of the easy part of the Gale-Ryser Theorem: we have replaced the majorization order with the lexicographic order; see \cite[1.4.17]{JK}. 

Suppose now that $h\geq2$, assume that $\varphi$ and $\psi^t$ are not equivalent (otherwise there is nothing to prove) and suppose that $\alpha\in\mathsf{C}(\mathcal{P},\mathcal{Q})$ is the quotient of $\varphi$, $\beta\in\mathsf{C}(\mathcal{M},\mathcal{N})$ is the quotient of $\psi$. Let $a_1$ be the greatest element in $V_\mathcal{Q}$ and $b_1$ the greatest element in $V_\mathcal{N}$, with respect to the order \eqref{comparingblocksb}. We denote by $\Lambda\colon\mathcal{Q}\ni a\mapsto\lambda^a$, respectively $\Xi\colon\mathcal{N}\ni b\mapsto\xi^b$, the tree of partitions associated to $\varphi$, respectively to $\psi^t$. We will use the following algorithm, based on   \eqref{comparingblocksc}, \eqref{comparingblocksb}, \eqref{deftotord} and \eqref{Totord}.

\begin{enumerate}
\item  
If there exists $b\in V_\mathcal{N}$, $b\neq b_1$, such that $(\alpha\circ\varphi)^{-1}(a_1)\cap(\beta\circ\psi)^{-1}(b)\neq\emptyset$ then we may choose a vertex $v$ in this nonempty set. By Lemma \ref{Propcoarreftriv}, the coarsest common refinement of $\varphi_v$ and $\psi_v$ is trivial and therefore, by the inductive hypothesis, we have $\varphi_v\leqq\psi_v^t$, so that $\Lambda_{a_1}\leq\Xi_b$. Since $\psi_v^t<\psi_w^t$ for every $w\in(\psi^t)^{-1}(b_1)$ then $\Xi_b<\Xi_{b_1}$, and we conclude that $a_1<'b_1$. From \eqref{deftotord} and we deduce that $\varphi<\psi^t$.

\item
If $(\alpha\circ\varphi)^{-1}(a_1)\subseteq(\beta^t\circ\psi^t)^{-1}(b_1)$ then again $\varphi_v
\leqq\psi_v^t$ for every $v\in (\alpha\circ\varphi)^{-1}(a_1)$. If actually $\varphi_v<\psi_v^t$ we deduce that $\varphi<\psi^t$.

\item
If $(\alpha\circ\varphi)^{-1}(a_1)\subseteq(\beta^t\circ\psi^t)^{-1}(b_1)$ and $\varphi_v,\psi_v^t$ are equivalent but $\lambda^{a_1}=\left\lvert(\alpha\circ\varphi)^{-1}(a_1)\right\rvert<\left\lvert(\beta^t\circ\psi^t)^{-1}(b_1)\right\rvert=\xi^{b_1}$ again $\varphi<\psi^t$. 

\item
If $(\alpha\circ\varphi)^{-1}(a_1)=(\beta^t\circ\psi^t)^{-1}(b_1)$ and $\varphi_v,\psi_v^t$ are equivalent but $\lambda^{a_1}<\xi^{b_1}$ again $\varphi<\psi^t$.

\item
The last case: if $(\alpha\circ\varphi)^{-1}(a_1)=(\beta^t\circ\psi^t)^{-1}(b_1)$, $\varphi_v,\psi_v^t$ are equivalent and also $\lambda^{a_1}=\xi^{b_1}$ (so that $a_1='b_1$) then we can delete $a_1$ and $b_1$, that is the subtrees $\mathcal{T}_v$, $v\in (\alpha\circ\varphi)^{-1}(a_1)$, and their images in $\mathcal{P}$, $\mathcal{M}$, $\mathcal{Q}$, $\mathcal{N}$. Denote by $\mathcal{T}'$, $\mathcal{P}'$, etc. the trees obtained after deletion and $\varphi', \psi'$, etc. the relative maps, which are obviously tree compositions, quotients, etc. By Lemma \ref{Propcoarreftriv}, the coarsest common refinement of $\varphi', \psi'$ is still trivial;  moreover, $\varphi'$ and $(\psi^t)'$ cannot be equivalent. Otherwise, we might construct an equivalence between $\varphi$ and $\psi^t$ by means of Theorem \ref{Theorem38}, using also the fact that $a_1='b_1$.

\end{enumerate}

Now we show that, assuming the result for height=$h-1$, we may prove it for height=$h$ by induction on $\lvert  V_\mathcal{Q}\rvert$.
If $\lvert V_\mathcal{Q}\rvert=1$ then we have two possibilities. If $\lvert  V_\mathcal{N}\rvert\geq2$ then the algorithm stops at step 1. If $\lvert  V_\mathcal{N}\rvert=1$ then it stops at step 2. or 4., because we have assumed that $\varphi$ and $\psi^t$ are not equivalent. 
If $\lvert  V_\mathcal{Q}\rvert\geq2$ we may apply the algorithm; if it arrives at step 5 we delete $a_1$ and $b_1$ and invoke induction on $\lvert  V_\mathcal{Q}\rvert$. 
\end{proof}

\begin{theorem}\label{PropositionTransposed2}
Suppose that $\varphi\in\mathsf{C}(\mathcal{T},\mathcal{P})$, $\alpha\in\mathsf{C}(\mathcal{P},\mathcal{Q})$ is its quotient and that 
$(\Lambda,\mathcal{Q})$ is the associated tree of partitions.
If $\psi\in\mathsf{C}(\mathcal{T},\mathcal{P}^t)$ is strictly equivalent to a transpose $\varphi^t$ of $\varphi$ and the coarsest common refinement $\rho\colon\mathcal{T}\rightarrow\mathcal{R}$ of the couple $(\varphi,\psi)$ is trivial then there exists $g\in \Aut(\mathcal{T})$ such that:
\[
g\varphi=\varphi\qquad\text{ and }\qquad g\varphi^t=\psi.
\]
In particular, the couples $(\varphi,\psi)$ and $(\varphi,\varphi^t)$ are strictly equivalent and $\rho$ is equivalent to the coarsest common refinement of $(\varphi,\varphi^t)$ (in the sense of \eqref{diagepsilon2}).
\end{theorem}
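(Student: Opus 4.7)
The plan is to prove the theorem by induction on $h \coloneqq H(\mathcal{T})$. The equalities $g\varphi = \varphi$ and $g\varphi^t = \psi$ amount precisely to a strict equivalence, with $\zeta = g^{-1}$, between the couples $(\varphi, \varphi^t)$ and $(\varphi, \psi)$ in the sense of \eqref{defstreqphipsi}. Once $g$ is produced, the final assertion that $\mathcal{R}$ is equivalent to the coarsest common refinement of $(\varphi, \varphi^t)$ follows from Proposition \ref{Propos4p15bis}, together with the fact that the coarsest common refinement of $(\varphi, \varphi^t)$ is itself trivial by Theorem \ref{PropositionTransposed}\eqref{transp3} and Lemma \ref{Lemma01cond}.

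\textbf{Base case $h = 1$.} Remark \ref{RemLemma01cond} reduces the triviality hypothesis to the $0-1$ intersection condition \eqref{01cond} for the pair $(\varphi, \psi)$. Since $\psi$ and $\varphi^t$ are strictly equivalent, they share the type $\lambda^t$, where $\lambda$ is the type of $\varphi$. The uniqueness of $0-1$ matrices with prescribed labeled marginals $(\lambda, \lambda^t)$ (the extremal case of Gale--Ryser, cf.\ \cite[1.4]{JK}) forces the intersection matrices of $(\varphi, \varphi^t)$ and $(\varphi, \psi)$ to coincide, and Remark \ref{01Remark} then yields the desired $g \in \Sym(V_\mathcal{T}) = \Aut(\mathcal{T})$.

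\textbf{Inductive step $h \geq 2$.} I proceed via the first level decomposition \eqref{firstld}. At level one, the restrictions of $\varphi$ and $\varphi^t$ to $V_\mathcal{T}$ have $0-1$ intersection by Theorem \ref{PropositionTransposed}\eqref{transp3} and, when grouped by the quotient into blocks $(\alpha \circ \varphi)^{-1}(a)$, have types $\lambda^a$ and $(\lambda^a)^t$ respectively on each block; the analogous structure is transported to $\psi$ through the strict equivalence $\psi \equiv \varphi^t$. Applying the base-case argument block by block gives a first-level permutation $\pi \in \Aut(\mathcal{T})\rvert_{V_\mathcal{T}}$ satisfying $\varphi \circ \pi^{-1} = \varphi$ and $\varphi^t \circ \pi^{-1} = \psi$ on $V_\mathcal{T}$. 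For each $v \in V_\mathcal{T} \setminus W_\mathcal{T}$, set $u \coloneqq \pi(v)$ and pick the pseudogroup element $\tau_{u,v} \colon \mathcal{T}_v \to \mathcal{T}_u$ from the fixed system of Theorem \ref{Prop1p5}\eqref{Prop1p53bis} associated to $\varphi$, which satisfies $\varphi_v = \varphi_u \circ \tau_{u,v}$. The inductive hypothesis applied inside $\mathcal{T}_v$ to the triple $(\varphi_v, \varphi^t_v, \psi_u \circ \tau_{u,v})$ produces $h_v \in \Aut(\mathcal{T}_v)$ with $h_v \varphi_v = \varphi_v$ and $h_v \varphi^t_v = \psi_u \circ \tau_{u,v}$. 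The full automorphism $g$ is then assembled via \eqref{firstldell} from $\pi$ and the first-level isomorphisms $g_v \coloneqq \tau_{u,v} \circ h_v^{-1}$, and a direct computation using \eqref{firstldell2} confirms that $g\varphi = \varphi$ and $g\varphi^t = \psi$.

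\textbf{Main obstacle.} The principal difficulty is verifying that the inductive hypothesis really applies inside each $\mathcal{T}_v$: both the strict equivalence of $\psi_u \circ \tau_{u,v}$ with $\varphi^t_v$ (which follows by unwinding Theorem \ref{PropositionTransposed}\eqref{transp1}--\eqref{transp2} together with the pseudogroup identities \eqref{proptau1b}--\eqref{propomega2b}) and, more delicately, the triviality of the coarsest common refinement of $(\varphi_v, \psi_u \circ \tau_{u,v})$. For the latter, the recursive structure of the coarsest common refinement from the proof of Theorem \ref{Propcoarsercommref} is essential: by Remark \ref{remcommref}, two distinct siblings $v, v'$ with $\varepsilon(v) = \varepsilon(v')$ give rise to strictly equivalent sub-couples $(\varphi_v, \psi_v)$ and $(\varphi_{v'}, \psi_{v'})$, so the global triviality of $\varepsilon$ combined with Lemma \ref{Propcoarreftriv} forbids such coincidences among distinct siblings, and the level-one matching imposed by $\pi$ propagates the triviality into each $\mathcal{T}_v$.
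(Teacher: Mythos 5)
Your base case matches the paper's, and your inductive-step machinery (applying the inductive hypothesis inside each $\mathcal{T}_v$ to the couple $(\varphi_v,\psi_{\pi(v)}\circ\tau_{\pi(v),v})$, then assembling $g$ via \eqref{firstldell}) is a legitimate repackaging of what the paper does. Modulo an inverse: with $\psi_u\circ\tau_{u,v}=\varphi^t_v\circ h_v^{-1}$, the first-level piece must be $g_v\coloneqq\tau_{u,v}\circ h_v$, not $\tau_{u,v}\circ h_v^{-1}$ (otherwise $\psi_u\circ g_v=\varphi^t_v\circ h_v^{-2}$). The strict-equivalence-of-couples argument you give for transferring triviality into $\mathcal{T}_v$ is correct via Proposition \ref{Propos4p15bis}, and it is a clean alternative to the paper's explicit modification of $g'$ into $g''$.

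However, there is a genuine gap in your first-level argument, and it is exactly where the paper spends the most effort. You write that ``the analogous structure is transported to $\psi$ through the strict equivalence $\psi\equiv\varphi^t$'' and then apply the base case ``block by block.'' But a strict equivalence $\psi=\varphi^t\circ\beta^{-1}$ with $\beta\in\Aut(\mathcal{T})$ does \emph{not} automatically transport the block partition of $V_\mathcal{T}$: $\psi$ has type $(\lambda^a)^t$ on the set $\beta\bigl((\alpha^t\circ\varphi^t)^{-1}(a)\bigr)$, not a priori on $(\alpha\circ\varphi)^{-1}(a)$. The equality
$(\alpha\circ\varphi)^{-1}(a)=(\alpha^t\circ\psi)^{-1}(a)$
is the content of \eqref{alphavarphi} in the paper and is \emph{proved}, not assumed, by an ordering argument: ranking the first-level blocks by the total order on trees of partitions, deducing from the triviality hypothesis, Lemma \ref{Propcoarreftriv} and Theorem \ref{TheoGalRy} that $\psi_u\leqq\varphi^t_u$ for each $u$, and then peeling blocks off from the minimal one. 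Theorem \ref{TheoGalRy} is essential to this step and does not appear in your proposal. Without \eqref{alphavarphi} the intersection matrix $\lvert\varphi^{-1}(x)\cap\psi^{-1}(y)\rvert$ need not have the block-diagonal shape required for the $0$--$1$ uniqueness argument, and the level-one permutation $\pi$ with $\varphi\circ\pi^{-1}=\varphi$ and $\varphi^t\circ\pi^{-1}=\psi$ on $V_\mathcal{T}$ cannot be constructed. Relatedly, even once the blocks coincide, the $0$--$1$ intersection at level one within a block is not a direct consequence of the triviality of the global coarsest common refinement (Remark \ref{RemLemma01cond} shows the implication fails in general for $h\geq 2$); it has to be extracted from the combination of block equality, the type constraints $\lambda^a$/$(\lambda^a)^t$, and the refinement structure, which again is what the paper's argument around \eqref{alphavarphi} accomplishes.
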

\begin{proof}
The proof is by induction on $H(\mathcal{T})$. For $H(\mathcal{T})=1$ this is the {\em uniqueness} of $m^\lambda$ in \cite[Section 7]{CST3} or of the $0-1$ matrix in \cite[2.1.1]{JK} but we need to recast it in our setting. The strict equivalence between $\psi$ and $\varphi^t$ implies that $\left\lvert\psi^{-1}(y)\right\rvert=\left\lvert\left(\varphi^t\right)^{-1}(y)\right\rvert$, for all $y\in V_{\mathcal{P}^t}$ while 
the triviality of $\rho$ and Remark \ref{01Remark} imply that $\left\lvert \varphi^{-1}(x)\cap \psi^{-1}(y)\right\rvert\leq 1$, for all $x\in V_\mathcal{P}$ and $y\in V_{\mathcal{P}^t}$. Then, from the aforementioned references, it follows that 
\[
\left\lvert\varphi^{-1}(x)\cap (\varphi^t)^{-1}(y)\right\rvert=\left\lvert \varphi^{-1}(x)\cap \psi^{-1}(y)\right\rvert,
\]
for all $x\in V_\mathcal{P}$ and $y\in V_{\mathcal{P}^t}$. Therefore there exists $\pi\in\Sym\left(V_\mathcal{T}\right)$ such that
\[
\pi\left(\varphi^{-1}(x)\cap (\varphi^t)^{-1}(y)\right)=\varphi^{-1}(x)\cap \psi^{-1}(y).
\]
In particular, $\pi\in\prod_{x\in V_\mathcal{P}}\Sym\left(\varphi^{-1}(x)\right)$, which implies that $\varphi\circ\pi=\varphi$, and $\pi\bigl((\varphi^t)^{-1}(y)\bigr)= \psi^{-1}(y)$, for all $y\in V_{\mathcal{P}}^t$, which implies that $\psi\circ\pi=\varphi^t$.

Now suppose that $H(\mathcal{T})\geq2$. On $W_\mathcal{T}$ we may argue as in the case $H(\mathcal{T})=1$.
Then we may suppose that $\varphi,\psi$ and $\varphi^t$ have the same quotient tree $\mathcal{Q}$ and that $\alpha^t$ is also the map of the quotient of $\psi$ (cf. Remark \ref{quotstequiv}) with the same tree of partitions (cf. Theorem \ref{Theoorbits}\eqref{Theoorbitsb}). We have 
\begin{equation}\label{alphavarphi}
(\alpha\circ\varphi)^{-1}(a)=(\alpha^t\circ\varphi^t)^{-1}(a)=(\alpha^t\circ\psi)^{-1}(a),\quad\text{ for all }a\in V_\mathcal{Q},
\end{equation}
where the first equality follows from $\alpha\circ\varphi=\alpha^t\circ\varphi^t$ in Theorem \ref{PropositionTransposed}\eqref{transp1} while the second will be proved now. First of all, from \eqref{lamalphell} it follows that
\begin{equation}\label{lvertalph}
\lvert(\alpha^t\circ\varphi^t)^{-1}(a)\rvert=\lvert(\alpha^t\circ\psi)^{-1}(a)\rvert;
\end{equation}
moreover, the tree compositions
\begin{equation}\label{lvertalph2}
\left\{\varphi^t_v:v\in(\alpha^t\circ\varphi^t)^{-1}(a)\right\}\;\text{ and }\;\left\{\psi_u:u\in(\alpha^t\circ\psi)^{-1}(a)\right\}
\end{equation}
are all equivalent, because $\varphi^t$ and $\psi$ are strictly equivalent. The second identity in \eqref{alphavarphi} is obvious for the unique $a\in W_\mathcal{Q}$ (if it exists). Now suppose that $a_1,a_2,\dotsc,a_m$ are the elements of $V_\mathcal{Q}\setminus W_\mathcal{Q}$ numbered in such a way that $\varphi^t_{v_1}>\varphi^t_{v_2}>\dotsb>\varphi^t_{v_m}$, if $v_i\in(\alpha^t\circ\varphi^t)^{-1}(a_i)$, $i=1,2,\dotsc,m$. Take $u\in (\alpha^t\circ\varphi^t)^{-1}(a_m)$; since $(\varphi_u,\psi_u)$ have a trivial common refinement (by the hypothesis and Lemma \ref{Propcoarreftriv}), from Theorem \ref{TheoGalRy} it follows that $\psi_u\leqq \varphi_u^t$. From \eqref{lvertalph2} and the minimality of $\varphi^t_{v_m}$ we deduce that $\psi_u$ is equivalent to $\varphi^t_u$ so that \eqref{lvertalph} and \eqref{lvertalph2} imply that the second equality in \eqref{alphavarphi} holds true for $a=a_m$. By iterating the procedure, we may prove the second equality in \eqref{alphavarphi} for all $a_i$.

Now we suppose that $g'\in\Aut(\mathcal{T})$ yields the strict equivalence between $\psi$ and $\varphi^t$, that is $\psi\circ g'=\varphi^t$, and we set $\sigma\coloneqq g'\rvert_{V_\mathcal{T}}$. 
Then
\begin{equation}\label{propgprimo} 
\psi(\sigma(u))=\varphi^t(u)\quad\text{ and }\quad\psi_{\sigma(u)}\circ g'_u=\varphi^t_u,
\end{equation}
for all $u\in V_\mathcal{T}\setminus W_\mathcal{T}$. In particular, $\psi_{\sigma(u)}$ and $\varphi^t_u$ are strictly equivalent, so that from \eqref{alphavarphi} it follows that $\sigma\left((\alpha\circ\varphi)^{-1}(a)\right)=(\alpha\circ\varphi)^{-1}(a)$. By means of the $\tau$'s of $\varphi$ and $\varphi^t$ (cf. Theorem \ref{PropositionTransposed}),  we define $g''_u\in\Aut\left(\mathcal{T}_u\right)$ by setting,
for all $u\in V_\mathcal{T}\setminus W_\mathcal{T}$,
\begin{equation}\label{defgsecbis}
g''_u\coloneqq g'_{\sigma^{-1}(u)}\circ\tau_{\sigma^{-1}(u),u}\qquad\qquad\xymatrix{
\mathcal{T}_u\ar[rrd]^{g''_u}\ar[d]_{\tau_{\sigma^{-1}(u),u}} && \\
\mathcal{T}_{\sigma^{-1}(u)}\ar[rr]_{g'_{\sigma^{-1}(u)}}&& \mathcal{T}_u\\
}
\end{equation}
Denoting by $\omega^t$ the $\omega$'s of $\varphi^t$, from \eqref{diagTRSuv}, \eqref{propgprimo} and \eqref{defgsecbis} it follows that 
\[
\psi_u\circ g''_u=\psi_u\circ g'_{\sigma^{-1}(u)}\circ\tau_{\sigma^{-1}(u),u}=\varphi^t_{\sigma^{-1}(u)}\circ\tau_{\sigma^{-1}(u),u}=\omega^t_{\psi(u),\varphi^t(u)}\circ\varphi^t_u.
\]
Then $\omega^t_{\psi(u),\varphi^t(u)}\circ\varphi^t_u$ is strictly equivalent to $\psi_u$ and it is a transpose of $\varphi_u$, because $\omega^t_{\psi(u),\varphi^t(u)}$ is an isomorphism. By the hypothesis and and Lemma \ref{Propcoarreftriv}, the coarsest common refinement of the couple $\varphi_u$ and $\psi_u$ is trivial. Now we can invoke induction: for every  $u\in V_\mathcal{T}\setminus W_\mathcal{T}$ there exists $\widetilde{g}_u\in\Aut\left(\mathcal{T}_u\right)$ such that 
\begin{equation}\label{defgtildeu}
\varphi_u\circ \widetilde{g}_u=\varphi_u\quad\text{ and }\quad\psi_u\circ \widetilde{g}_u=\omega^t_{\psi(u),\varphi^t(u)}\circ\varphi^t_u.
\end{equation}

Now suppose that $u,v\in(\alpha\circ\varphi)^{-1}(a)$, $\varphi(u)=\varphi(v)$ and $\psi(u)=\psi(v)$. From \eqref{proptau2b}, \eqref{diagTRSuv} and \eqref{diagTRSuv2b} it follows that  
\[
\varphi_u\circ\tau_{u,v}=\varphi_v\quad\text{ and }\quad\omega^t_{\psi(u),\varphi^t(u)}\circ\varphi^t_u\circ\tau_{u,v}=\omega^t_{\psi(u),\varphi^t(u)}\circ\omega^t_{\varphi^t(u),\varphi^t(v)}\circ\varphi^t_v=\omega^t_{\psi(v),\varphi^t(v)}\circ\varphi^t_v.
\]
Taking also \eqref{defgtildeu} into account, we deduce that the couples 
\[
\left(\varphi_u,\psi_u\right),\quad \left(\varphi_u,\omega^t_{\psi(u),\varphi^t(u)}\circ\varphi^t_u\right), \quad\left(\varphi_v,\omega^t_{\psi(v),\varphi^t(v)}\circ\varphi^t_v\right)\quad \text{ and }\quad\left(\varphi_v,\psi_v\right)
\]
are all strictly equivalent. From Lemma \ref{Propcoarreftriv} and the triviality of $\rho$ it follows that $u=v$, so that, within each set, $(\alpha\circ\varphi)^{-1}(a)$ the restriction of the map $\rho$ determines a trivial common refinement of the restrictions of $\varphi$ and $\psi$. Since the associated integer partitions are $\lambda^a$ and $(\lambda^a)^t$, by the case $H(\mathcal{T})=1$, there exists $\pi\in\prod\limits_{x\in V_\mathcal{P}}\Sym\left(\varphi^{-1}(x)\right)$ such that 
\begin{equation}\label{vargpsi}
\varphi\rvert_{V_\mathcal{T}}\circ \pi=\varphi\rvert_{V_\mathcal{T}}\quad\text{ and }\quad\psi\rvert_{V_\mathcal{T}}\circ \pi=\varphi^t\rvert_{V_\mathcal{T}}.
\end{equation}
After that, for all $u\in V_\mathcal{T}\setminus W_\mathcal{T}$, we define an isomorphism $g_u\colon\mathcal{T}_u\rightarrow\mathcal{T}_{\pi(u)}$ by setting
\begin{equation}\label{defgud}
g_u\coloneqq\widetilde{g}_{\pi(u)}\circ\tau_{\pi(u),u}\qquad\qquad
\xymatrix{
\mathcal{T}_u\ar[rrd]^{g_u}\ar[d]_{\tau_{\pi(u),u}} && \\
\mathcal{T}_{\pi(u)}\ar[rr]_{\widetilde{g}_{\pi(u)}}&& \mathcal{T}_{\pi(u)}\\
}
\end{equation}
Then we have
\begin{align*}
\varphi_{\pi(u)}\circ g_u
=&\varphi_{\pi(u)}\circ\widetilde{g}_{\pi(u)}\circ\tau_{\pi(u),u}=\varphi_{\pi(u)}\circ\tau_{\pi(u),u}&(\text{by }\eqref{defgud}\text{ and }\eqref{defgtildeu})\\
=&\varphi_u&(\text{by }\eqref{diagTRSuv2b}\text{ and }\eqref{vargpsi})\\
\end{align*}
and
\begin{align*}
\psi_{\pi(u)}\circ g_u
=&\omega^t_{\psi(\pi(u)),\varphi^t(\pi(u))}\circ\varphi^t_{\pi(u)}\circ\tau_{\pi(u),u}&(\text{by }\eqref{defgud}\text{ and }\eqref{defgtildeu})\\
=&\omega^t_{\varphi^t(u),\varphi^t(\pi(u))}\circ\varphi^t_{\pi(u)}\circ\tau_{\pi(u),u}&(\text{by }\eqref{vargpsi})\\
=&\varphi^t_u.&(\text{by }\eqref{diagTRSuv})\\
\end{align*}
Finally, we define $g\in\Aut(\mathcal{T})$ by means of \eqref{firstldell}, setting $g\rvert_{W_\mathcal{T}}\coloneqq \pi\rvert_{W_\mathcal{T}}$ and then $g(u,w)\coloneqq (\pi(u),g_u(w))$, for all $u\in V_\mathcal{T}\setminus W_\mathcal{T}$, $w\in \widetilde{\mathcal{T}}_u$. Taking also \eqref{vargpsi} into account, from \eqref{firstldell2} we deduce that $\varphi\circ g=\varphi$ and $\psi\circ g=\varphi^t$, that is $g\varphi=\varphi$ and $g\varphi^t=\psi$; cf. \eqref{defgvaract}.
\end{proof}

\section{The irreducible representations via Mackey theory}\label{SecrepAuttree}

Now we set the stage for the application of Mackey theory of induced representations for which we refer to \cite{Bump, CST3, book2, book4, Sternberg}. 

If $\mathcal{P}, \mathcal{M}$ are compositional trees for $\mathcal{T}$ we denote by $\mathsf{I}(\mathcal{T};\mathcal{P},\mathcal{M})$ the set of all equivalence classes of coarsest common refinements of couples $(\varphi,\psi)\in\mathsf{C}(\mathcal{T},\mathcal{P})\times\mathsf{C}(\mathcal{T},\mathcal{M})$; cf. \eqref{diagepsilon2} and Definition \ref{defcoarcomref}.
We denote by $[\mathcal{R}]$ an element of $\mathsf{I}(\mathcal{T};\mathcal{P},\mathcal{M})$. If $ (\varphi,\psi)\in\mathsf{C}(\mathcal{T},\mathcal{P})\times\mathsf{C}(\mathcal{T},\mathcal{M})$ then we denote by $[\mathcal{R}](\varphi,\psi)$ the equivalence classes in $\mathsf{I}(\mathcal{T};\mathcal{P},\mathcal{M})$ containing the coarsest common refinement of $(\varphi,\psi)$. The diagonal action of  $\Aut(\mathcal{T})$
on $\mathsf{C}(\mathcal{T},\mathcal{P})\times\mathsf{C}(\mathcal{T},\mathcal{M})$ is given by:
\[
g(\varphi,\psi)\coloneqq(\varphi\circ g^{-1},\psi\circ g^{-1}),\qquad\text{for } (\varphi,\psi)\in\mathsf{C}(\mathcal{T},\mathcal{P})\times\mathsf{C}(\mathcal{T},\mathcal{M}), \quad g\in \Aut(\mathcal{T});
\]
cf. \eqref{defgvaract}.
From Proposition \ref{Propos4p15bis} it follows that $\mathsf{I}(\mathcal{T};\mathcal{P},\mathcal{M})$ parametrizes the strict equivalence classes of the couples $(\varphi,\psi)$, that is the orbits of $\Aut(\mathcal{T})$
on $\mathsf{C}(\mathcal{T},\mathcal{P})\times\mathsf{C}(\mathcal{T},\mathcal{M})$.

\begin{definition}{\rm
If $\varphi\in\mathsf{C}(\mathcal{T},\mathcal{P})$ and $\psi\in\mathsf{C}(\mathcal{T},\mathcal{M})$ are fixed tree compositions we denote by $\mathsf{I}(\mathcal{T};\lvert\varphi^{-1}\rvert,\lvert\psi^{-1}\rvert)$ the set of all equivalence classes of coarsest common refinements of couples $(\varphi',\psi')\in(\mathcal{P},\lvert\varphi^{-1}\rvert)\times(\mathcal{P},\lvert\psi^{-1}\rvert)$ (cf. \eqref{deforbitphi}). }
\end{definition}

Clearly $\mathsf{I}(\mathcal{T};\lvert\varphi^{-1}\rvert,\lvert\psi^{-1}\rvert)$ parametrizes the diagonal orbits of $\Aut(\mathcal{T})$ on $(\mathcal{P},\lvert\varphi^{-1}\rvert)\times(\mathcal{P},\lvert\psi^{-1}\rvert)$ and the $K_\varphi$-orbits on $(\mathcal{P},\lvert\psi^{-1}\rvert)$; see \cite[Corollary 10.4.13]{book4}. Then for each $[\mathcal{R}]\in\mathsf{I}(\mathcal{T};\lvert\varphi^{-1}\rvert,\lvert\psi^{-1}\rvert)$ we can choose $\psi_{[\mathcal{R}]}\in(\mathcal{P},\lvert\psi^{-1}\rvert)$ such that $[\mathcal{R}]\left(\varphi,\psi_{[\mathcal{R}]}\right)=[\mathcal{R}]$ and then $t_{[\mathcal{R}]}\in\Aut(\mathcal{T})$ such that $t_{[\mathcal{R}]}\psi=\psi_{[\mathcal{R}]}$. It follows that $t_{[\mathcal{R}]}K_{\psi}t_{[\mathcal{R}]}^{-1}=K_{\psi_{[\mathcal{R}]}}$. Moreover,
\begin{equation}\label{defHR}
H_{[\mathcal{R}]}\coloneqq K_{\varphi}\cap\left(t_{[\mathcal{R}]}K_{\psi}t_{[\mathcal{R}]}^{-1}\right)\;\text{ is the stabilizer in }\;K_{\varphi}\;\text{ of }\;\psi_{[\mathcal{R}]}
\end{equation}
and the orbits of $K_{\varphi}$ on $(\mathcal{P},\lvert\psi^{-1}\rvert)$ are $\left\{k\psi_{[\mathcal{R}]}\colon k\in K_{\varphi}\right\}\equiv\left\{kt_{[\mathcal{R}]}\psi\colon k\in K_{\varphi}\right\}$, $[\mathcal{R}]\in\mathsf{I}(\mathcal{T};\lvert\varphi^{-1}\rvert,\lvert\psi^{-1}\rvert)$.
Finally,
\begin{equation}\label{DDdec}
\Aut(\mathcal{T})=\bigsqcup_{[\mathcal{R}]\in\mathsf{I}(\mathcal{T};\lvert\varphi^{-1}\rvert,\lvert\psi^{-1}\rvert)}K_{\varphi}t_{[\mathcal{R}]}K_{\psi}
\end{equation}
is the decomposition of $\Aut(\mathcal{T})$ into double $K_{\varphi}-K_{\psi}$ cosets, cf. \cite[Section 11.3]{book4}.

We define the {\em sign representation} $\text{sign}\colon\Aut(\mathcal{T})\rightarrow\{\pm1\}$ of $\Aut(\mathcal{T})$ inductively as follows. If $H(\mathcal{T})=1$ then it is the usual $\text{sign}$ representation of the symmetric group. If $H(\mathcal{T})>1$ then we use the notation in Example \ref{idtreecomp}, we assume that sign is defined on the groups $\Aut\left(\mathcal{T}_{u_c}\right)$, $c\in V_\mathcal{C}\setminus W_\mathcal{C}$ and we set
\begin{equation}\label{Defsign}
\text{sign}(g)\coloneqq\text{sign}\left(g\rvert_{V_\mathcal{T}}\right)\cdot\prod_{c\in V_\mathcal{C}\setminus W_\mathcal{C}}\prod\limits_{v\in\text{\rm tv}^{-1}(c)}\text{sign}\left(\tau_{u_c,g(v)}\circ g_v\circ \tau_{v,u_c}\right),
\end{equation}
where we have used a system of $\tau$'s for the trivial composition (cf. Definition \ref{Defpseudo}). Clearly $\tau_{u_c,g(v)}\circ g_v\circ\tau_{v,u_c}\in\Aut\left(\mathcal{T}_{u_c}\right)$.
\begin{lemma}
The function \text{\rm sign} in \eqref{Defsign} is a representation of $\Aut(\mathcal{T})$.
\end{lemma}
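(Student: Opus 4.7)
The plan is to proceed by induction on $h \coloneqq H(\mathcal{T})$. The base case $h=1$ is the classical fact that the sign of a permutation of a finite set is a homomorphism independent of any choices (there is nothing to choose). For the inductive step, I will assume that \textbf{sign} has been defined on each $\Aut(\mathcal{T}_{u_c})$ for $c \in V_\mathcal{C} \setminus W_\mathcal{C}$, and that there it is a well-defined homomorphism. The key observation, used throughout, is that each $g\in\Aut(\mathcal{T})$ permutes every fiber $\tv_\mathcal{T}^{-1}(c)$ (since $g$ sends $\mathcal{T}_v$ isomorphically onto $\mathcal{T}_{g(v)}$, so the subtrees attached to $v$ and $g(v)$ have the same isomorphism type, hence the same $\tv_\mathcal{T}$-image).

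First I would prove independence from the $\tau$'s. Let $\{\tau_{w,v}\}$ and $\{\tau'_{w,v}\}$ be two pseudogroups of strict equivalences for $\tv_\mathcal{T}$ at the first level. For every $v \in \tv_\mathcal{T}^{-1}(c)$ there is an automorphism $\alpha_v \coloneqq \tau_{u_c,v}\circ (\tau'_{u_c,v})^{-1} \in \Aut(\mathcal{T}_{u_c})$, so $\tau'_{u_c,v} = \alpha_v^{-1}\circ\tau_{u_c,v}$ and $\tau'_{v,u_c}=\tau_{v,u_c}\circ\alpha_v$. A direct substitution yields
\[
\tau'_{u_c,g(v)}\circ g_v \circ \tau'_{v,u_c}
= \alpha_{g(v)}^{-1}\circ\bigl(\tau_{u_c,g(v)}\circ g_v\circ \tau_{v,u_c}\bigr)\circ \alpha_v.
\]
Applying the inductive homomorphism property of \textbf{sign} on $\Aut(\mathcal{T}_{u_c})$ and taking the product over $v \in \tv_\mathcal{T}^{-1}(c)$, the extra factors contribute $\prod_v \text{sign}(\alpha_{g(v)})^{-1} \text{sign}(\alpha_v)$. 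Since $v\mapsto g(v)$ is a bijection of $\tv_\mathcal{T}^{-1}(c)$, the two products cancel and the value of \eqref{Defsign} is unchanged.

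Next I would prove multiplicativity $\text{sign}(gh)=\text{sign}(g)\text{sign}(h)$. The factor on $V_\mathcal{T}$ splits because $g\mapsto g\rvert_{V_\mathcal{T}}$ is a homomorphism and the ordinary sign on $\Sym(V_\mathcal{T})$ is multiplicative. For the remaining factor, use \eqref{firstldell2}: $(gh)_v = g_{h(v)}\circ h_v$. Inserting $\tau_{h(v),u_c}\circ\tau_{u_c,h(v)}=\id_{\mathcal{T}_{u_c}}$ (via \eqref{proptau2b}) in the middle,
\[
\tau_{u_c,(gh)(v)}\circ (gh)_v \circ \tau_{v,u_c}
= \bigl(\tau_{u_c,g(h(v))}\circ g_{h(v)}\circ \tau_{h(v),u_c}\bigr)\circ \bigl(\tau_{u_c,h(v)}\circ h_v \circ \tau_{v,u_c}\bigr).
\]
By the inductive hypothesis, \textbf{sign} on $\Aut(\mathcal{T}_{u_c})$ is multiplicative, so the sign of the left-hand side is the product of the signs of the two factors. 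Taking the product over $v\in\tv_\mathcal{T}^{-1}(c)$, the substitution $w=h(v)$ (which ranges over the whole $\tv_\mathcal{T}^{-1}(c)$) turns the first product into $\prod_{w\in\tv_\mathcal{T}^{-1}(c)} \text{sign}(\tau_{u_c,g(w)}\circ g_w\circ \tau_{w,u_c})$, i.e.\ exactly the factor appearing in $\text{sign}(g)$, while the second product is the factor appearing in $\text{sign}(h)$. Multiplying across all $c$ and combining with the $V_\mathcal{T}$-factor completes the induction.

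The only mildly delicate point is the reindexing step in the multiplicativity argument: it uses in an essential way that $h$ sends $\tv_\mathcal{T}^{-1}(c)$ to itself, not merely that it permutes $V_\mathcal{T}$. Once this invariance of the fibers of $\tv_\mathcal{T}$ under $\Aut(\mathcal{T})$ is explicitly noted (it follows immediately from the characterization of $\tv_\mathcal{T}$ in Example \ref{idtreecomp}), both independence from the $\tau$'s and multiplicativity reduce to routine applications of the inductive hypothesis.
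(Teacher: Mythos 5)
Your proposal is correct and follows essentially the same route as the paper's proof: inserting $\tau_{h(v),u_c}\circ\tau_{u_c,h(v)}=\id$ via the composition property \eqref{proptau2b} to split $(gh)_v=g_{h(v)}\circ h_v$, then reindexing over the fiber $\tv_\mathcal{T}^{-1}(c)$ (justified by \eqref{squaresigmap17}), and writing $\tau'_{v,u_c}=\tau_{v,u_c}\circ\alpha_v$ with $\alpha_v\in\Aut(\mathcal{T}_{u_c})$ for the independence claim (your $\alpha_v$ is the paper's $\widetilde{g}_v$). The only cosmetic difference is that you state the induction explicitly and spell out that $g$ preserves each fiber $\tv_\mathcal{T}^{-1}(c)$, which the paper leaves implicit behind the citation of \eqref{squaresigmap17}.
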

\begin{proof}
If $g,g'\in\Aut(\mathcal{T})$ then 
\[
\begin{split}
\text{sign}(g'g)=&\text{\rm sign}\left[(g'g)\rvert_{V_\mathcal{T}}\right]\cdot\prod_{\substack{c\in V_\mathcal{C}\setminus W_\mathcal{C}\\ v\in\text{\rm tv}^{-1}(c)}}\text{sign}\left[\tau_{u_c,g'g(v)}\circ( g'g)_v\circ \tau_{v,u_c}\right]\\
(\text{by }\eqref{firstldell2}\text{ and }\eqref{proptau2b})\quad=&\text{sign}\left(g'\rvert_{V_\mathcal{T}}\right)\cdot\text{sign}\left(g\rvert_{V_\mathcal{T}}\right)\cdot\prod_{\substack{c\in V_\mathcal{C}\setminus W_\mathcal{C}\\ v\in\text{\rm tv}^{-1}(c)}}\Bigl\{\text{sign}\left[\tau_{u_c,g'g(v)}\circ g'_{g(v)}\circ \tau_{g(v),u_c}\right]\cdot\\
&\qquad\cdot\text{sign}\left[\tau_{u_c,g(v)}\circ g_v\circ \tau_{v,u_c}\right]\Bigr\}\\
(\text{by }\eqref{squaresigmap17})\quad=&\text{sign}(g')\cdot\text{sign}(g).
\end{split}
\]
\end{proof}
We denote by $\iota_G$ the {\em trivial representation} of the group $G$. In the notation of Definition \ref{defYoungmodules}, we have $M^{\lvert\varphi^{-1}\rvert}=\Ind_{K_{\varphi}}^{\text{\rm Aut}(\mathcal{T})}\iota_{K_{\varphi}}$. Similarly, we set $\widetilde{M}^{\lvert\varphi^{-1}\rvert}\coloneqq\Ind_{K_{\varphi}}^{\text{\rm Aut}(\mathcal{T})}\text{sign}$.

\begin{proposition}\label{PropinvMMt1}
If $\varphi$ and $\psi$ are equivalent as compositions of $\mathcal{T}$ then $\widetilde{M}^{\lvert\varphi^{-1}\rvert}$ and $\widetilde{M}^{\lvert\psi^{-1}\rvert}$ are equivalent as $\Aut(\mathcal{T})$-representations.
\end{proposition}
\begin{proof}
Indeed, $\widetilde{M}^{\lvert\psi^{-1}\rvert}$ is the space of all functions $f\colon\Aut(\mathcal{T})\rightarrow\mathbb{C}$ such that $f(g\circ k)=\text{sign}(k^{-1})f(g)$, for all $g\in \Aut(\mathcal{T})$, $k\in K_\psi$, with the left regular representation. From \eqref{Kconj} it follows that the map $T\colon\widetilde{M}^{\lvert\psi^{-1}\rvert}\rightarrow \widetilde{M}^{\lvert\varphi^{-1}\rvert}$ given by $[Tf](g)\coloneqq f(g\circ\tau^{-1})$, for all $g\in\Aut(\mathcal{T})$, is an equivalence between the induced representations.
\end{proof}

Denote by $\mathsf{I}_0(\mathcal{T};\lvert\varphi^{-1}\rvert,\lvert\psi^{-1}\rvert)$ the set of all $[\mathcal{R}]\in\mathsf{I}(\mathcal{T};\lvert\varphi^{-1}\rvert,\lvert\psi^{-1}\rvert)$ which represent equivalence classes of {\em trivial} common refinements.
\begin{proposition}
If $\varphi$ and $\psi$ are tree compositions of $\mathcal{T}$ then 
\begin{equation}\label{Mackey0}
\text{\rm dim}\Hom_{\text{\rm Aut}(\mathcal{T})}\left(M^{\lvert\varphi^{-1}\rvert},M^{\lvert\psi^{-1}\rvert}\right)=\left\lvert\mathsf{I}(\mathcal{T};\lvert\varphi^{-1}\rvert,\lvert\psi^{-1}\rvert)\right\rvert
\end{equation}
and
\begin{equation}\label{Mackey1}
\text{\rm dim}\Hom_{\text{\rm Aut}(\mathcal{T})}\left(M^{\lvert\varphi^{-1}\rvert},\widetilde{M}^{\lvert\psi^{-1}\rvert}\right)=\left\lvert\mathsf{I}_0(\mathcal{T};\lvert\varphi^{-1}\rvert,\lvert\psi^{-1}\rvert)\right\rvert.
\end{equation}
\end{proposition}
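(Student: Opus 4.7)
The plan is to apply Mackey's intertwining-number formula to both homomorphism spaces. For a finite group $G$ with subgroups $H,K$ and characters $\sigma$ of $H$ and $\tau$ of $K$,
\[
\dim\Hom_G\bigl(\Ind_H^G\sigma,\,\Ind_K^G\tau\bigr)=\sum_{HgK\subseteq G}\dim\Hom_{H\cap gKg^{-1}}\bigl(\sigma\rvert_{H\cap gKg^{-1}},\,(g\cdot\tau)\rvert_{H\cap gKg^{-1}}\bigr).
\]
By Proposition \ref{Prop81p47} and the double coset decomposition \eqref{DDdec}, with $H=K_{\varphi}$, $K=K_{\psi}$ and $g=t_{[\mathcal{R}]}$, the double cosets are parametrised by the equivalence classes $[\mathcal{R}]\in\mathsf{I}(\mathcal{T};\lvert\varphi^{-1}\rvert,\lvert\psi^{-1}\rvert)$ and each intersection is precisely the group $H_{[\mathcal{R}]}$ of \eqref{defHR}. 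For \eqref{Mackey0}, taking $\sigma=\tau=\iota$ makes every summand equal to $1$, so the formula follows at once.

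For \eqref{Mackey1}, with $\sigma=\iota_{K_{\varphi}}$ and $\tau=\text{sign}\rvert_{K_{\psi}}$, the character $\text{sign}$ is one-dimensional, hence a class function on $\Aut(\mathcal{T})$, so $(t_{[\mathcal{R}]}\cdot\text{sign})\rvert_{H_{[\mathcal{R}]}}=\text{sign}\rvert_{H_{[\mathcal{R}]}}$; each summand then equals $1$ or $0$ according as $\text{sign}\rvert_{H_{[\mathcal{R}]}}$ is trivial or not. The theorem thus reduces to the equivalence
\[
\text{sign}\rvert_{H_{[\mathcal{R}]}}\equiv 1\quad\Longleftrightarrow\quad [\mathcal{R}]\in\mathsf{I}_0(\mathcal{T};\lvert\varphi^{-1}\rvert,\lvert\psi^{-1}\rvert).
\]
The implication ``$\Leftarrow$'' is immediate from Lemma \ref{Lemmatrivaut}: a trivial coarsest common refinement of $(\varphi,\psi_{[\mathcal{R}]})$ forces $H_{[\mathcal{R}]}=\{e\}$, on which $\text{sign}$ is trivially constant. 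For the reverse direction I exhibit, given any $[\mathcal{R}]\notin\mathsf{I}_0$, an explicit element $g\in H_{[\mathcal{R}]}$ with $\text{sign}(g)=-1$: letting $\varepsilon$ be the non-trivial coarsest common refinement of $(\varphi,\psi_{[\mathcal{R}]})$, I pick distinct siblings $u,w\in\text{\rm Ch}(v)$ for some internal $v\in\mathcal{T}$ with $\varepsilon(u)=\varepsilon(w)$, and take $g$ to be the element of $\Aut(\mathcal{T})$ defined in \eqref{defgsign} that swaps $\mathcal{T}_u,\mathcal{T}_w$ via the isomorphism $\zeta\colon\mathcal{T}_u\to\mathcal{T}_w$ furnished by the equivalence between $\varepsilon_u$ and $\varepsilon_w$, and fixes every other vertex.

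The remaining core claim $\text{sign}(g)=-1$ will be proved by induction on $h=H(\mathcal{T})$. In the base case $v=\emptyset$, the isomorphism $\mathcal{T}_u\cong\mathcal{T}_w$ forces $u,w$ to lie in a common fibre $\tv_\mathcal{T}^{-1}(c)$ (Example \ref{idtreecomp}), so $g\rvert_{V_\mathcal{T}}$ is the single transposition $(u,w)$ inside $\Sym\bigl(\tv_\mathcal{T}^{-1}(c)\bigr)$ and contributes the factor $-1$ in \eqref{Defsign}. The only non-identity contributions to the product over vertices come from $v'=u$ and $v'=w$, and since $g_u=\zeta$ and $g_w=\zeta^{-1}$, the composition law \eqref{proptau2b} makes the product of the two corresponding factors equal $\text{sign}(\id_{\mathcal{T}_{u_c}})=+1$; in total $\text{sign}(g)=-1$. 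When $v\neq\emptyset$, $g$ is the identity on $V_\mathcal{T}$ and on every first-level subtree other than $\mathcal{T}_{\widetilde v}$ (the first-level ancestor of $v$), while inside $\mathcal{T}_{\widetilde v}$ the restriction $g_{\widetilde v}$ is a swapping element of exactly the same shape but of strictly smaller height; the inductive hypothesis gives $\text{sign}(g_{\widetilde v})=-1$, and the invariance of sign under conjugation by any rooted tree isomorphism (a one-dimensional character is preserved by the group isomorphism $\Aut(\mathcal{T}_{\widetilde v})\to\Aut(\mathcal{T}_{u_c})$ induced by the $\tau$'s) propagates this to $\text{sign}(g)=-1$ via \eqref{Defsign}.

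The main obstacle is precisely this inductive sign computation: verifying that in the base case the two factors produced by $u$ and $w$ cancel exactly via the composition property of the $\tau$'s and the relation $g_w=g_u^{-1}$, and that the sign character is sufficiently natural with respect to isomorphisms of rooted subtrees for the induction on height to propagate cleanly through \eqref{Defsign}.
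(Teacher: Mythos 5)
Your proof is correct and follows the same route as the paper: both identities come from Mackey's intertwining-number theorem applied to the double coset decomposition \eqref{DDdec}, with \eqref{Mackey1} reduced to showing that $\text{sign}\rvert_{H_{[\mathcal{R}]}}$ is trivial iff $[\mathcal{R}]$ is trivial, via Lemma \ref{Lemmatrivaut} and the swapping automorphism $g$ of \eqref{defgsign}. The one place you go beyond the paper is in spelling out, by induction on height with the base-case cancellation via \eqref{proptau2b} and the class-function invariance of sign under the $\tau$-conjugations, why $\text{sign}(g)=-1$ — a step the paper asserts without computation — and this filled-in argument is sound.
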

\begin{proof}
The identity in \eqref{Mackey0} follows from \eqref{DDdec} and Mackey intertwining numbers Theorem (cf. \cite[Corollary 11.4.5]{book4}, \cite[p. 64]{Sternberg}). Similarly, Mackey formula for the invariants (cf. \cite[Corollary 11.4.4]{book4}, \cite[p. 164]{Sternberg}) yields
\[
\Hom_{\text{\rm Aut}(\mathcal{T})}\left(M^{\lvert\varphi^{-1}\rvert},\widetilde{M}^{\lvert\psi^{-1}\rvert}\right)=\bigoplus_{[\mathcal{R}]\in\mathsf{I}(\mathcal{T};\lvert\varphi^{-1}\rvert,\lvert\psi^{-1}\rvert)}\Hom_{H_{[\mathcal{R}]}}\left(\iota,\text{sign}\right),
\]
where $\iota$ and $\text{sign}$ are respectively the restriction of trivial and of the sign representations to $H_{[\mathcal{R}]}$. But if $\mathcal{R}$ is not trivial then $H_{[\mathcal{R}]}$ contains some $g$ with $\text{sign}(g)=-1$; for instance, from the inductive definition \eqref{Defsign} it follows that the sign of the $g$ in \eqref{defgsign} is $\,-1$ and from Lemma \ref{Lemmatrivaut} and \eqref{defHR} it follows that this $g$ belongs to $H_{[\mathcal{R}]}$. If this is the case then the trivial and the sign representations are not equivalent and therefore the corresponding $\Hom$ space is trivial. By contrast, if $\mathcal{R}$ is trivial then also $H_{[\mathcal{R}]}$ is trivial and the corresponding $\Hom$ space is one-dimensional and this ends the proof of \eqref{Mackey1}.
\end{proof}

\begin{corollary}\label{Corirr}
The $\Aut(\mathcal{T})$-representations $M^{\lvert\varphi^{-1}\rvert}$ and $\widetilde{M}^{\lvert(\varphi^t)^{-1}\rvert}$ have a unique irreducible common constituent, which is contained in both of them with multiplicity one.
\end{corollary}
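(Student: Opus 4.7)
The plan is to reduce the corollary to the numerical identity $\dim \Hom_{\Aut(\mathcal{T})}(M^{\lvert\varphi^{-1}\rvert}, \widetilde{M}^{\lvert(\varphi^t)^{-1}\rvert}) = 1$, then extract the constituent statement by Schur's lemma. Formula \eqref{Mackey1} translates the dimension count into cardinality of $\mathsf{I}_0(\mathcal{T};\lvert\varphi^{-1}\rvert,\lvert(\varphi^t)^{-1}\rvert)$, so everything comes down to showing this set is a singleton. All the hard work has already been done in Theorems \ref{PropositionTransposed}, \ref{TheoGalRy} and \ref{PropositionTransposed2}.

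Existence of a trivial coarsest common refinement of some couple in $(\mathcal{P},\lvert\varphi^{-1}\rvert)\times(\mathcal{P}^t,\lvert(\varphi^t)^{-1}\rvert)$ is immediate: by Theorem \ref{PropositionTransposed}\ref{transp3}, $\varphi$ and $\varphi^t$ have $0$–$1$ intersection, and by Lemma \ref{Lemma01cond} this implies that the coarsest common refinement of $(\varphi,\varphi^t)$ is trivial. Thus $[\mathcal{R}](\varphi,\varphi^t)\in\mathsf{I}_0(\mathcal{T};\lvert\varphi^{-1}\rvert,\lvert(\varphi^t)^{-1}\rvert)$.

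For uniqueness, take any $\psi\in(\mathcal{P}^t,\lvert(\varphi^t)^{-1}\rvert)$ such that the coarsest common refinement of $(\varphi,\psi)$ is trivial. Since $\psi$ has the same type as $\varphi^t$, Theorem \ref{lemma2p15} ensures $\psi$ is strictly equivalent to $\varphi^t$. Consequently the hypotheses of Theorem \ref{PropositionTransposed2} are satisfied, which produces $g\in\Aut(\mathcal{T})$ with $g\varphi=\varphi$ and $g\varphi^t=\psi$. This means $(\varphi,\psi)$ lies in the same diagonal $\Aut(\mathcal{T})$-orbit as $(\varphi,\varphi^t)$, and by Proposition \ref{Prop81p47} we conclude $[\mathcal{R}](\varphi,\psi)=[\mathcal{R}](\varphi,\varphi^t)$. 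Hence $\lvert\mathsf{I}_0(\mathcal{T};\lvert\varphi^{-1}\rvert,\lvert(\varphi^t)^{-1}\rvert)\rvert=1$, and \eqref{Mackey1} yields $\dim \Hom_{\Aut(\mathcal{T})}(M^{\lvert\varphi^{-1}\rvert}, \widetilde{M}^{\lvert(\varphi^t)^{-1}\rvert}) = 1$.

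Finally, decompose both modules into isotypic components over the irreducible representations of $\Aut(\mathcal{T})$: write $M^{\lvert\varphi^{-1}\rvert}\cong\bigoplus_i m_i V_i$ and $\widetilde{M}^{\lvert(\varphi^t)^{-1}\rvert}\cong\bigoplus_j n_j W_j$ with $\{V_i\}$ and $\{W_j\}$ pairwise non-isomorphic irreducibles. Schur's lemma gives
\[
1=\dim \Hom_{\Aut(\mathcal{T})}(M^{\lvert\varphi^{-1}\rvert}, \widetilde{M}^{\lvert(\varphi^t)^{-1}\rvert})=\sum_{V_i\cong W_j}m_i n_j.
\]
Since each $m_i n_j$ is a non-negative integer and the sum equals $1$, exactly one pair satisfies $V_{i_0}\cong W_{j_0}$ with $m_{i_0}=n_{j_0}=1$, and this irreducible is precisely the unique common constituent, appearing with multiplicity one in each. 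No step of the argument required a new calculation; the heart of the matter is Theorem \ref{PropositionTransposed2}, which is the tree-composition analogue of the uniqueness of the $0$–$1$ matrix in \cite[2.1.1]{JK} and was the real technical obstacle.
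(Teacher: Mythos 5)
Your proof is correct and follows the same route the paper takes: reduce to the count $\lvert\mathsf{I}_0(\mathcal{T};\lvert\varphi^{-1}\rvert,\lvert(\varphi^t)^{-1}\rvert)\rvert=1$ (existence via Theorem~\ref{PropositionTransposed}\eqref{transp3} and Lemma~\ref{Lemma01cond}, uniqueness via Theorem~\ref{PropositionTransposed2} and Proposition~\ref{Prop81p47}), then invoke \eqref{Mackey1} and Schur's lemma. You simply spell out details the paper leaves implicit; the passing mention of Theorem~\ref{TheoGalRy} is superfluous but harmless.
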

\begin{proof}
From Theorem \ref{PropositionTransposed2} it follows that if $\varphi^t$ is a transpose of $\varphi$ then $\left\lvert\mathsf{I}_0(\mathcal{T};\lvert\varphi^{-1}\rvert,\lvert(\varphi^t)^{-1}\rvert)\right\rvert=1$ and then we may apply \eqref{Mackey1}.
\end{proof}

\begin{definition}\label{DefSLambda}
{\rm
In the notation of Definition \ref{defIrrG}, let $\varphi\in\mathsf{C}(\mathcal{T},\mathcal{P})$ and denote by $\Lambda$ the associated tree of partitions. Then we define $S^\Lambda$ as the unique irreducible common constituent of $M^{\lvert\varphi^{-1}\rvert}$ and $\widetilde{M}^{\lvert(\varphi^t)^{-1}\rvert}$ (cf. Corollary \ref{Corirr}). By Theorem \ref{Theoorbits}\eqref{Theoorbitsa}, Proposition \ref{PropinvMMt}, Theorem \ref{PropositionTransposed}\eqref{transp1} and Proposition \ref{PropinvMMt1}, $S^\Lambda$ does not depend on the choice of $\varphi$ and of its transposed $\varphi^t$ but just on $\Lambda$.} 
\end{definition}

\begin{theorem}\label{Theoirredrep}
The set $\left\{S^\Lambda:\Lambda\in\mathsf{Par}(\mathcal{T})\right\}$ is a complete list of pairwise, inequivalent representations of the group $\Aut(\mathcal{T})$.
\end{theorem}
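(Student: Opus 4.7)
The proof will have two ingredients: pairwise inequivalence of the $S^\Lambda$ and a counting argument.

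First I would establish the following key intertwining bound: if $\varphi\in\mathsf{C}(\mathcal{T},\mathcal{P})$ and $\psi\in\mathsf{C}(\mathcal{T},\mathcal{M})$, then
\[
\Hom_{\text{\rm Aut}(\mathcal{T})}\!\left(M^{\lvert\varphi^{-1}\rvert},\widetilde{M}^{\lvert\psi^{-1}\rvert}\right)\neq 0\quad\Longrightarrow\quad \varphi\leqq \psi^t
\]
with respect to the total order of Section \ref{Sectreeofpart}. Indeed, by the Mackey formula \eqref{Mackey1}, the left-hand side is nonzero iff $\mathsf{I}_0(\mathcal{T};\lvert\varphi^{-1}\rvert,\lvert\psi^{-1}\rvert)$ is nonempty, that is, there exist $\varphi'$ in the $\Aut(\mathcal{T})$-orbit of $\varphi$ and $\psi'$ in the orbit of $\psi$ whose coarsest common refinement is trivial. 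Theorem \ref{TheoGalRy} then forces $\varphi'\leqq(\psi')^t$, which by Theorem \ref{Teoequivtransp} (invariance of transposition under equivalence) and the definition \eqref{Totord} of the order on equivalence classes yields $\varphi\leqq\psi^t$.

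Next I would deduce pairwise inequivalence. Suppose $(\Lambda,\mathcal{Q}),(\Lambda',\mathcal{Q}')\in\mathsf{Par}(\mathcal{T})$ satisfy $S^\Lambda\cong S^{\Lambda'}$. Pick representatives $\varphi,\varphi'$ with tree of partitions $\Lambda,\Lambda'$ and fix transposes $\varphi^t,(\varphi')^t$. By Definition \ref{DefSLambda}, $S^\Lambda$ sits inside $M^{\lvert\varphi^{-1}\rvert}$ and $S^{\Lambda'}$ sits inside $\widetilde{M}^{\lvert((\varphi')^t)^{-1}\rvert}$; since $S^\Lambda\cong S^{\Lambda'}$ is a common constituent of these two modules, the intertwining space between them is nonzero. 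Applying the implication above with $\psi=(\varphi')^t$ (noting that a transpose of $\psi$ is equivalent to $\varphi'$ by Remark \ref{Remtransp}) gives $\Lambda\leqq\Lambda'$. Swapping the roles of $\Lambda$ and $\Lambda'$ yields $\Lambda'\leqq\Lambda$, so by totality of the order, $\Lambda\equiv\Lambda'$ as required.

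Finally I would close the argument by counting. The $S^\Lambda$, $\Lambda\in\mathsf{Par}(\mathcal{T})$, are pairwise inequivalent irreducibles of the finite group $\Aut(\mathcal{T})$. By Corollary \ref{Corconjclasses}, the set $\mathsf{Par}(\mathcal{T})$ is in bijection with the conjugacy classes of $\Aut(\mathcal{T})$, and the number of irreducible representations of a finite group equals the number of its conjugacy classes. Hence
\[
\lvert\{S^\Lambda:\Lambda\in\mathsf{Par}(\mathcal{T})\}\rvert=\lvert\mathsf{Par}(\mathcal{T})\rvert=\#\{\text{conjugacy classes of }\Aut(\mathcal{T})\},
\]
so the list is exhaustive.

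The main obstacle will be the first step: making rigorous the passage from a nonzero intertwining map to the order inequality $\varphi\leqq\psi^t$. Everything depends on Theorem \ref{TheoGalRy}, whose hypothesis (trivial coarsest common refinement) is exactly what \eqref{Mackey1} and the vanishing of the sign on nontrivial stabilizers provide; but one must also verify that replacing $\varphi,\psi$ by equivalent tree compositions and replacing the transpose by an equivalent one preserves both the order comparison and the intertwining number, which is where Theorem \ref{Teoequivtransp} and Proposition \ref{PropinvMMt} do the heavy lifting. Once this bookkeeping is in place, the remainder of the argument is a direct counting application of the conjugacy-class correspondence from Section \ref{Secconjclasses}.
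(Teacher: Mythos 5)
Your proposal is correct and follows essentially the same route as the paper: you use \eqref{Mackey1} to conclude $\mathsf{I}_0(\mathcal{T};\lvert\varphi^{-1}\rvert,\lvert(\psi^t)^{-1}\rvert)\neq\emptyset$ from a common constituent, invoke Theorem \ref{TheoGalRy} plus Theorem \ref{Teoequivtransp} to derive an order inequality, run symmetry to force equivalence, and finally appeal to Corollary \ref{Corconjclasses} and conjugacy-class counting for completeness. The only difference is that you spell out the intermediate invariance bookkeeping (orbit representatives, Remark \ref{Remtransp}, invariance of the order) which the paper compresses into a single sentence.
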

\begin{proof}
First of all, we prove that if $(\Lambda,\mathcal{Q}),(\Xi,\mathcal{N})\in\mathsf{Par}(\mathcal{T})$ are not isomorphic, then $S^\Lambda$ and $S^\Xi$ are not equivalent. Indeed, if $\varphi\in\mathsf{C}(\mathcal{T},\mathcal{P})$ is associated to $(\Lambda,\mathcal{Q})$ and $\psi\in\mathsf{C}(\mathcal{T},\mathcal{M})$ to $(\Xi,\mathcal{N})$, by \eqref{Mackey1} and Definition \ref{DefSLambda}, the equivalence of $S^\Lambda$ and $S^\Xi$ would lead to $\left\lvert\mathsf{I}_0(\mathcal{T};\lvert\varphi^{-1}\rvert,\lvert(\psi^t)^{-1}\rvert)\right\rvert\neq0$, so that, by Theorem \ref{TheoGalRy} and recalling that $\psi$ is a transpose of $\psi^t$, $\varphi<\psi$; by symmetry, we get also $\psi<\varphi$ and this is impossible. Therefore the irreducible representations in the statement are pairwise inequivalent and, by Corollary \ref{Corconjclasses}, this system is also complete.
\end{proof}

\noindent
From the proof of Theorem \ref{Theoirredrep}, we can deduce immediately the following Corollary.

\begin{corollary}\label{Corirredrep}
If $(\Lambda,\mathcal{Q})$ is the tree of partitions of $\varphi$ and $S^\Lambda$ is contained in $M^{\lvert\psi^{-1}\rvert}$ then $\varphi\geqq \psi$.
\end{corollary}

\begin{example}\label{Exsigntens}{\rm
In the notation of Example \ref{idtreecomp} we have: $K_{\text{\rm id}_\mathcal{T}}=\{1_{\text{\rm Aut}(\mathcal{T})}\}$ and $K_{\text{\rm tv}_\mathcal{T}}=\Aut(\mathcal{T})$. Therefore $M^{\lvert\text{\rm id}_\mathcal{T}^{-1}\rvert}$ is the regular representation of $\Aut(\mathcal{T})$, $M^{\lvert\text{\rm tv}_\mathcal{T}^{-1}\rvert}$ coincides with the trivial representation $\iota_{\text{\rm Aut}(\mathcal{T})}$ and $\widetilde{M}^{\lvert\text{\rm tv}_\mathcal{T}^{-1}\rvert}$ is the sign representation. If we denote by $\Lambda_{\text{\rm id}}$ and $\Lambda_{\text{\rm tv}}$ the trees of partitions respectively of the identity and trivial composition (see Example \ref{treeparttvid} for the case of a spherically homogeneous tree), taking Example \ref{transptvid} into account, we have
\[
S^{\Lambda_{\text{\rm tv}}}\text{ is the trivial representation},\qquad
S^{\Lambda_{\text{\rm id}}}\text{ is the sign representation}.
\]
Finally, arguing as in \cite[p. 36]{JK}, we can prove that, for every tree composition $\varphi$ of $\mathcal{T}$ with associated tree of partitions $\Lambda$, we have
\[
S^{\Lambda_{\text{\rm id}}}\otimes M^{\lvert\varphi^{-1}\rvert}\sim \widetilde{M}^{\lvert\varphi^{-1}\rvert}\quad\text{ and }\quad S^{\Lambda_{\text{\rm id}}}\otimes S^\Lambda\sim S^{\Lambda^t}
\]
where $\Lambda^t$ is the tree of partitions associated to any transpose $\varphi^t$ of $\varphi$. Indeed, the first equivalence follows from an easy application of \cite[Corollary 11.1.17]{book4}; the second equivalence follows from the fact that $S^{\Lambda_{\text{\rm id}}}\otimes S^\Lambda$ is contained both in $S^{\Lambda_{\text{\rm id}}}\otimes M^{\lvert\varphi^{-1}\rvert}\sim \widetilde{M}^{\lvert\varphi^{-1}\rvert}$ and in $S^{\Lambda_{\text{\rm id}}}\otimes \widetilde{M}^{\lvert(\varphi^t)^{-1}\rvert}\sim M^{\lvert(\varphi^t)^{-1}\rvert}$, so that it coincides with $S^{\Lambda^t}$.
}
\end{example}

Finally, we prove that both Clifford theory and Mackey theory lead to the same parametrization of the irreducible representations of $\Aut(\mathcal{T})$.

\begin{theorem}
With the notation in Theorem \ref{irredClifftheo} and Theorem \ref{Theoirredrep}, $Z^\Lambda$ and $S^\Lambda$ are equivalent $\Aut(\mathcal{T})$ irreducible representations. Moreover, they coincide as multiplicity one subspaces of $M^{\lvert\varphi^{-1}\rvert}$ if $Z^\Lambda$ is constructed by means of the realization of $\varphi$.
\end{theorem}
\begin{proof}
 First of all, by induction on $H(\mathcal{T})$, we prove that $S^{\Lambda_{\text{\rm id}}}\otimes Z^\Lambda\sim Z^{\Lambda^t}$; cf. Example \ref{Exsigntens}. For $H(\mathcal{T})=1$ it is a well known fact in the representation theory of the symmetric group. Denote by $Z'$ and $S'$ the representations in \eqref{irredClifford} but constructed by means of $\Lambda^t$; recall that, by Theorem \ref{PropositionTransposed}\eqref{transp1}, $\Lambda^t$ has the same realization $\delta$ of $\Lambda$, and therefore in the application of \eqref{irredClifford} we may use the same group $I$ in \eqref{irredsigcbis}. After that, for $g\in I$ we set 
\[
\widetilde{\text{sign}}(g)\coloneqq\prod_{c\in V_\mathcal{C}\setminus W_\mathcal{C}}\prod\limits_{v\in\text{\rm tv}^{-1}(c)}\text{sign}\left(\tau_{u_c,g(v)}\circ g_v\circ \tau_{v,u_c}\right)\quad\text{ and }\quad\overline{\text{sign}}(g)\coloneqq\text{sign}\left(g\rvert_{V_\mathcal{T}}\right).
\]
Clearly, $\widetilde{\text{sign}}$ is an extension of $\text{sign}$ from $N$ to $I$ while $\overline{\text{sign}}$ is the inflation of $\text{sign}$ from $I/N$ to $I$. From \eqref{Defsign} it follows that $\Res^{\text{\rm Aut}(\mathcal{T})}_I\text{\rm sign }=\widetilde{\text{sign}}\otimes\overline{\text{sign}}$ and therefore 
\begin{align*}
\text{\rm sign}\otimes Z^\Lambda=&\text{\rm sign }\otimes \Ind_I^{\text{\rm Aut}(\mathcal{T})}\left(\widetilde{Z}\otimes\overline{S}\right)&(\text{by }\eqref{irredClifford})\\
=&\Ind_I^{\text{\rm Aut}(\mathcal{T})}\left[\left(\Res_I^{\text{\rm Aut}(\mathcal{T})}\text{\rm sign}\right)\otimes\left(\widetilde{Z}\otimes\overline{S}\right)\right]&(\text{by }\cite[\rm Theorem 11.1.16]{book4})\\
=&\Ind_I^{\text{\rm Aut}(\mathcal{T})}\left[\left(\widetilde{\text{\rm sign }\otimes Z}\right)\otimes\left(\overline{\text{\rm sign }\otimes S}\right)\right]&\\
=&\Ind_I^{\text{\rm Aut}(\mathcal{T})}\left( Z' \otimes S'\right)&(\text{inductive hypothesis})\\
=&Z^{\Lambda^t}&
\end{align*}

We deduce that $S^{\Lambda_{\text{\rm id}}}\otimes Z^\Lambda$ is contained both in $S^{\Lambda_{\text{\rm id}}}\otimes M^{\lvert\varphi^{-1}\rvert}\sim \widetilde{M}^{\lvert\varphi^{-1}\rvert}$ (cf. Example \ref{Exsigntens}) and in $M^{\lvert(\varphi^t)^{-1}\rvert}$, so that $S^{\Lambda_{\text{\rm id}}}\otimes Z^\Lambda\sim S^{\Lambda^t}$ and therefore
$Z^\Lambda\sim  S^{\Lambda_{\text{\rm id}}}\otimes S^{\Lambda^t}\sim S^\Lambda$. Actually, $S^\Lambda$ and $Z^\Lambda$ coincide as subspaces of $M^{\lvert\varphi^{-1}\rvert}$, because both have multiplicity one.
\end{proof}

\vspace{1cm}

\noindent
Fabio Scarabotti, Dipartimento SBAI, Sapienza Universit\`a di Roma, via A. Scarpa 8, 00161, Roma, Italy\\
\noindent
Istituto Nazionale di Alta Matematica ``Francesco Severi'', 00185 Roma, Italy\\
\noindent
 {\it e-mail:} {\tt fabio.scarabotti@sbai.uniroma1.it}

\end{document}